\DeclareMathOperator{\Ann}{\mathrm{Ann}}
\DeclareMathOperator{\Aut}{\mathrm{Aut}}
\newcommand{\C}{\mathbb{C}}
\DeclareMathOperator{\Hom}{\mathrm{Hom}}
\newcommand{\N}{\mathbb{N}}
\newcommand{\R}{\mathbb{R}}
\newcommand{\Z}{\mathbb{Z}}
\newcommand{\bs}{\backslash}
\newcommand{\ddt}{\frac{d}{dt}}
\newcommand{\derham}[1]{\Omega^{#1}}
\newcommand{\epi}{\twoheadrightarrow}
\newcommand{\extalg}{\bigwedge}
\newcommand{\foliation}{\mathcal{F}}
\newcommand{\forus}[1]{}
\newcommand{\from}{\colon}
\newcommand{\heis}{\mathfrak{h}}
\newcommand{\id}{\mathrm{Id}}
\DeclareMathOperator{\im}{\mathrm{im}}
\newcommand{\klie}{\mathfrak{k}}
\newcommand{\lap}{\Delta}
\newcommand{\liealgdual}{\liealg^*}
\newcommand{\liealg}{\mathfrak{g}}
\newcommand{\lie}{\mathcal{L}}
\newcommand{\pr}{\mathrm{pr}}
\DeclareMathOperator{\rank}{\mathrm{rank}}
\newcommand{\sign}{\mathrm{sign}}
\newcommand{\sm}[1]{ \left(\begin{smallmatrix} #1 \end{smallmatrix}\right)}
\newcommand{\tievsky}{\mathcal{T}}
\newcommand{\tileta}{{\eta_\liealg}}
\newcommand{\tr}[1]{\widehat{#1}}
\newcommand{\uc}[1]{\widetilde{#1}}
\newtheorem{theorem}{Theorem}[section]
\newtheorem{proposition}[theorem]{Proposition}
\newtheorem{proposition-definition}[theorem]{Proposition-Definition}
\newtheorem{lemma}[theorem]{Lemma}
\newtheorem{corollary}[theorem]{Corollary}
\newtheorem{claim}[theorem]{Claim}
\theoremstyle{remark}
\newtheorem{remark}[theorem]{Remark}
\theoremstyle{definition}
\title{Nilpotent aspherical Sasakian manifolds}
\author[A. De Nicola]{Antonio De Nicola}
 \address{Dipartimento di Matematica, Università degli Studi di Salerno, Via Giovanni Paolo II 132, 84084 Fisciano, Italy}
 \email{antondenicola@gmail.com}
\author[I. Yudin]{Ivan Yudin}
 \address{University of Coimbra, CMUC, Department of Mathematics, 3001-501 Coimbra, Portugal}
 \email{yudin@mat.uc.pt}
\thanks{This work was partially supported by the Centre for Mathematics of the
University of Coimbra - UIDB/00324/2020, funded by the Portuguese Government through FCT/MCTES.
ADN is a member of GNSAGA - Istituto Nazionale di Alta Matematica}
\begin{document}
\begin{abstract}
We show that every compact aspherical Sasakian manifold with nilpotent
fundamental group is diffeomorphic to a Heisenberg nilmanifold.
\end{abstract}
\maketitle
\section{Introduction}
The interaction between topological constraints and geometric
structures is a  classical topic. If we are interested only in the homotopy type of an
underlying manifold then Algebraic Topology techniques are usually sufficient.
For example, Benson and Gordon in~\cite{bensongordon} show that a compact nilpotent aspherical manifold that admits
a K\"ahler structure is homotopy equivalent to a torus.
It is known that Sasakian manifolds play the same role in contact geometry as Kähler
manifolds in symplectic geometry.
Similarly to the result of Benson and Gordon, a compact nilpotent aspherical manifold that admits a Sasakian
structure is homotopy equivalent to a Heisenberg nilmanifold~\cite{imrn,bazzoni}.

Recall that an \emph{aspherical manifold} is a manifold whose homotopy groups besides the
fundamental group are trivial. A  manifold is called \emph{nilpotent} if its fundamental group is nilpotent.
According to the Borel conjecture any two compact aspherical manifolds with the same
fundamental group should be homeomorphic. This conjecture is proven for a wide class of
groups, including nilpotent groups (cf.~\cite{lueck}).
In particular, a compact nilpotent aspherical manifold that admits a Sasakian
structure is homeomorphic to a Heisenberg nilmanifold.

One of the most surprising results  of the 20th century was Milnor's discovery in~\cite{milnor}  of exotic
spheres, i.e.\ spheres that are homeomorphic but not diffeomorphic to the
standard one. Nowadays there are many known examples of  topological
manifolds that admit non-equivalent smooth structures.

If one fixes a  smooth structure on a topological manifold, which is quite natural
from a differential geometer's point of view, then the problems of existence of
compatible geometric structures on it become much
harder to solve and require \emph{ad hoc} approaches.
For example the conjecture of  Boyer, Galicki and Kollár \cite{bgk} that predicts that every
parallelizable exotic sphere admits a Sasakian structure is still open.

In this paper we study the existence of Sasakian structures on compact aspherical
manifolds with  nilpotent fundamental group.
Among all aspherical smooth manifolds with nilpotent fundamental group the most
studied class is the class of nilmanifolds.
A \emph{nilmanifold} is
 a compact quotient of  a nilpotent Lie group by
a discrete subgroup with the smooth structure inherited from the Lie group.
One can show that a compact aspherical nilpotent manifold is homotopy equivalent
to a nilmanifold.
Then, it follows from  the truthfulness of the Borel conjecture in the nilpotent
case that the manifold is homeomorphic to a nilmanifold.
If it is not diffeomorphic to a nilmanifold, it is called  an
\emph{exotic nilmanifold}.
By~\cite[Lemma~4]{exoticnil} a connected sum of a nilmanifold with an exotic
sphere is always  an exotic nilmanifold.
 Thus exotic nilmanifolds do exist.
Moreover, it is not difficult to show the existence of contact exotic
nilmanifolds.
By the main result of Meckert in \cite{contactsum},
  the connected
sum of two contact manifolds carries a contact structure.
Moreover, it is shown in~\cite{bgk} that there are infinitely many Sasakian
(hence contact) exotic spheres. Thus the connected sum of a contact nilmanifold
and a Sasakian exotic sphere provides an example of a  contact exotic nilmanifold.

Write $H(1,n)$ for the Heisenberg group of dimension $2n+1$.
The main result of the article is the following theorem.
\begin{theorem}
\label{main2}
If $M^{2n+1}$ is a compact aspherical Sasakian manifold with nilpotent fundamental
group,  then $M$ is diffeomorphic to a Heisenberg nilmanifold $\Gamma\bs H(1,n)$, where $\Gamma$
is a lattice in $H(1,n)$ isomorphic to $\pi_1(M)$.
Moreover,
there is a second type deformation of a left-invariant normal almost contact
structure on $\Gamma \bs H(1,n)$, such that $M$ and $\Gamma\bs H(1,n)$
are isomorphic as normal almost contact manifolds.
\end{theorem}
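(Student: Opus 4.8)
The plan is to establish two facts and then combine them. The first is the known homotopy-theoretic result cited in the introduction: a compact aspherical Sasakian manifold with nilpotent fundamental group is homotopy equivalent to a Heisenberg nilmanifold $\Gamma \bs H(1,n)$, with $\Gamma \cong \pi_1(M)$. This already pins down the fundamental group as a lattice in the Heisenberg group. The second, and genuinely new, ingredient is to upgrade this homotopy equivalence to a \emph{diffeomorphism}, and simultaneously to produce the compatible normal almost contact structure on the nilmanifold side.

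First I would analyze the Sasakian structure on $M$. A Sasakian structure gives a contact form $\eta$ with Reeb field $\xi$, an endomorphism $\phi$, and a compatible metric $g$, satisfying the normality and compatibility conditions. The Reeb field $\xi$ generates a one-dimensional foliation $\foliation$, and because $M$ is compact aspherical with nilpotent $\pi_1$, I expect this foliation to be the fiber direction of a principal $S^1$-bundle (or at least a Seifert-type fibration) over a compact aspherical manifold whose fundamental group is again nilpotent. The Boothby--Wang type structure would exhibit $M$ as an $S^1$-bundle over a symplectic (in fact Kähler) aspherical nilpotent base $N^{2n}$. Applying the Benson--Gordon result to $N$ shows $N$ is homotopy equivalent, hence (by Borel in the nilpotent case) homeomorphic, to a torus $T^{2n}$; the content specific to our setting is that $N$ is in fact \emph{diffeomorphic} to $T^{2n}$, using that the base of this fibration is a symplectic aspherical nilmanifold and the Benson--Gordon argument forces its minimal model, and hence its Lie algebra, to be abelian.

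Next I would reconstruct the total space. Once $N \cong T^{2n}$ diffeomorphically and the Euler class of the circle bundle is identified (it must be the standard symplectic class forcing the Heisenberg structure, since $\pi_1(M) \cong \Gamma$ is the nonabelian lattice and determines the bundle up to equivalence over the torus), the total space $M$ is diffeomorphic to the corresponding $S^1$-bundle over $T^{2n}$, which is exactly $\Gamma \bs H(1,n)$. The key point is that smooth $S^1$-bundles over $T^{2n}$ are classified by their Euler class in $H^2(T^{2n};\Z)$, so matching Euler classes yields a fiber-preserving diffeomorphism. For the second assertion, I would start from the standard left-invariant normal almost contact structure on $\Gamma \bs H(1,n)$ and transport the Sasakian $(\phi,\xi,\eta)$ from $M$ along the diffeomorphism; the structures need not agree on the nose but should differ by a deformation preserving the Reeb foliation and the underlying contact distribution, i.e.\ a second type deformation. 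Verifying that the transported structure is related to the left-invariant one by precisely such a deformation — matching the normality condition and the $\phi$-invariant splitting of the contact distribution — is the step I expect to be the main obstacle, since it requires controlling the transverse Kähler data and ensuring the deformation does not alter the diffeomorphism type of the almost contact structure.

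The hardest part, as indicated, will be passing from the homeomorphism supplied by Borel to an actual diffeomorphism and simultaneously matching the almost contact data: one must rule out exotic smooth structures on the total space, which is where the fibration description and the classification of circle bundles over the torus do the essential work, and then show the resulting normal almost contact structure is deformation-equivalent to the homogeneous model.
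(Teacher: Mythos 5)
Your proposal hinges on exhibiting the Reeb foliation as the fibre of an $S^1$-bundle (or Seifert fibration) over a compact Kähler base $N^{2n}$, and then reconstructing $M$ as a circle bundle over a torus. This is precisely the step that fails in general: the Reeb field of a Sasakian structure need not be regular or even quasi-regular, so its orbits can be dense and there is no Boothby--Wang quotient at all. This is not a technicality --- the paper explicitly notes that the regular case was already handled by Baues and Kamishima by exactly the quotient argument you describe, and that ``their approach, based on passing to the quotient of the Reeb vector field action, cannot be extended to the non-regular case.'' Even in the quasi-regular case the leaf space is only a Kähler \emph{orbifold}, so Benson--Gordon and the Kähler rigidity results of Baues--Cortés do not apply directly; and your assertion that the Benson--Gordon argument forces the base to be \emph{diffeomorphic} (not just homotopy equivalent) to a torus is itself the full strength of the Kähler analogue of the theorem, which you cannot simply invoke for the base without proof.

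The paper's route avoids any quotient of the Reeb action. It builds the comparison map $h\from M \to \Gamma\bs H(1,n)$ directly: a quasi-isomorphism $\bigwedge\liealg_\Gamma^* \to \Omega^\bullet(M)$ landing in the Tievsky model is realized as $h^*\circ\psi_\Gamma$ for a smooth map $h$ obtained by integrating a flat $\liealg_\Gamma$-valued $1$-form (\Cref{main1}); the Tievsky model constraint forces $h^*\eta_\heis = \eta + (df)\circ\varphi$ for a basic $f$, whence $h$ is $df$-twisted and $h_f\from M\times\R \to N_\Gamma\times\R$ is holomorphic between the Kähler cones. Exactness of the Kähler form on the cone then kills all positive-dimensional compact subvarieties in the fibres, making $h_f$ a finite analytic covering of degree $\pm1$, hence a biholomorphism. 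If you want to salvage your outline you would need either to prove that every such Sasakian structure can be deformed to a regular one (which is not known and is not how the paper proceeds), or to replace the fibration argument with a direct construction of the comparison map as above.
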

Equivalently, the above result can be stated as the non-existence of
compact Sasakian exotic nilmanifold.
Theorem~\ref{main2} can be seen as an odd-dimensional version of~\cite{cortes,tralle},
where it is shown that every compact Kähler aspherical nilmanifold is
biholomorphic to a complex torus.
As a corollary, Baues and Kamishima derived in~\cite{baues2020} the result of Theorem~\ref{main2} under the stronger
assumption that the Sasakian structure is \emph{regular}. Their approach, based on passing to
the quotient of the Reeb vector field action, cannot be extended to the
non-regular  case.

Baues and Cortés also showed in~\cite{cortes} that
if $X$ is a compact aspherical Kähler manifold with virtually solvable fundamental group, then $X$ is biholomorphic to a finite quotient of a complex torus.
In the same vain we obtain the following.
\begin{corollary}
\label{solvable}
If $M^{2n+1}$ is a compact aspherical Sasakian manifold with (virtually) solvable fundamental
group, then $M$ is diffeomorphic to  a finite quotient of a Heisenberg nilmanifold.
\end{corollary}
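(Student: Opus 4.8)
The plan is to bootstrap from Theorem~\ref{main2}: I will first show that the virtually solvable fundamental group of a compact aspherical Sasakian manifold is necessarily \emph{virtually nilpotent}, and then realize $M$ as a finite quotient of a nilpotent finite cover to which the main theorem applies.

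First I would set up the group theory and reduce to a solvable situation. Since $M$ is closed and aspherical, $\Gamma := \pi_1(M)$ acts freely on the contractible universal cover, so $\Gamma$ is torsion-free and is a Poincaré duality group of formal dimension $2n+1$. By the virtual solvability hypothesis there is a finite-index solvable subgroup $\Gamma_0 \leq \Gamma$; passing to the associated finite cover $M_0 \to M$, I may assume $\pi_1(M_0) = \Gamma_0$ is solvable. Here $M_0$ is again compact, aspherical, and Sasakian, because the Sasakian structure pulls back along a covering map and a finite cover of a closed aspherical manifold is closed aspherical. As $\Gamma_0$ is a torsion-free solvable Poincaré duality group, it is polycyclic by the standard structure theory of such groups.

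The crux, and the step I expect to be the main obstacle, is upgrading \emph{solvable} to \emph{virtually nilpotent} using the transverse Kähler geometry. After a type~I deformation of the Reeb field, which does not change the underlying smooth manifold and hence leaves $\Gamma_0$ unchanged, I may assume $M_0$ is quasi-regular, so that it is the total space of a Seifert fibration $S^1 \to M_0 \to X$ over a compact Kähler orbifold $X$. The orbifold homotopy exact sequence then produces a central extension $1 \to Z \to \Gamma_0 \to \pi_1^{\mathrm{orb}}(X) \to 1$, where $Z$ is the cyclic image of the fiber. The quotient $\pi_1^{\mathrm{orb}}(X)$ is a solvable Kähler orbifold group, hence \emph{virtually nilpotent} by the theorem of Arapura and Nori (in the orbifold form employed by Baues and Cortés in~\cite{cortes}). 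Since a central extension of a virtually nilpotent group is again virtually nilpotent, it follows that $\Gamma_0$, and therefore $\Gamma$, is virtually nilpotent. This is precisely the point at which the Kähler-group input must be transported into the Sasakian setting, and where the non-regular case must be reduced, via deformation, to the quasi-regular one.

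Finally I would conclude through Theorem~\ref{main2}. Choose a finite-index \emph{normal} torsion-free nilpotent subgroup $\Gamma_1 \trianglelefteq \Gamma$ (for instance the normal core of a finite-index nilpotent subgroup, which remains nilpotent and torsion-free), and let $M_1 \to M$ be the corresponding regular finite cover. Then $M_1$ is a compact aspherical Sasakian manifold with nilpotent fundamental group $\Gamma_1$, so by Theorem~\ref{main2} it is diffeomorphic to a Heisenberg nilmanifold $\Gamma_1 \bs H(1,n)$. Because $M = M_1 / F$ for the finite deck group $F = \Gamma/\Gamma_1$ acting freely and properly discontinuously, $M$ is diffeomorphic to the finite quotient $(\Gamma_1 \bs H(1,n))/F$ of a Heisenberg nilmanifold, which is the assertion of Corollary~\ref{solvable}.
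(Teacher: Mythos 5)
Your outer architecture is the same as the paper's: pass to the finite solvable cover, use Poincar\'e duality plus Bieri's structure theory to get a torsion-free polycyclic group, upgrade to virtually nilpotent, and then feed the resulting nilpotent finite cover into Theorem~\ref{main2}. Where you genuinely diverge is the middle step. The paper treats ``polycyclic fundamental group of a compact Sasakian manifold is virtually nilpotent'' as a black box, citing Kasuya~\cite{kasuya}; you instead sketch a proof of it via Rukimbira-type quasi-regularization, the Seifert fibration over a compact K\"ahler orbifold, and the solvable-K\"ahler-groups-are-virtually-nilpotent theorem. That is a legitimate and geometrically illuminating route (and is close in spirit to how such statements are actually proved), but as written it rests on an input you have not secured: Arapura--Nori's theorem is proved for smooth (quasi-)projective varieties, the compact K\"ahler case is due to Delzant, and what you actually need is a version for \emph{orbifold} fundamental groups of compact K\"ahler orbifolds; Baues and Cort\'es~\cite{cortes} work with manifolds and the Albanese map, not orbifolds, so the ``orbifold form employed by Baues and Cort\'es'' is not something you can point to. Until that orbifold statement is proved or correctly sourced, your middle step is essentially a re-derivation of Kasuya's theorem with a gap at its key citation, and the cleanest fix is simply to invoke \cite{kasuya} as the paper does. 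A minor point in your favor: by passing to the normal core you make the nilpotent cover regular, so that ``finite quotient'' is literally a quotient by a free finite group action, a detail the paper's own proof glosses over.
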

\begin{proof}
Let $M^{2n+1}$ be a compact aspherical Sasakian manifold with virtually solvable fundamental
group. Denote by $\overline{ M }$ the finite cover of $M$ with solvable
fundamental group.  The Sasakian structure on $M$ transfers to a Sasakian
structure on $\overline{ M }$.
By the result of Bieri in~\cite{bieri} (see also \cite{bieri_book}),
every solvable group, for which Poincaré duality holds, is torsion-free and
polycyclic. In~\cite{kasuya}, Kasuya showed that if the fundamental group of a
compact Sasakian manifold is polycyclic, then it is virtually nilpotent. Hence
there is a finite (compact) cover $\widetilde{M}$ of $\overline{ M }$, such
that $\pi_1(\widetilde{M})$  is nilpotent. The Sasakian structure on $\overline{
M }$ transfers to a Sasakian structure on $\widetilde{M}$. As
$\widetilde{M}$ is a finite cover of an aspherical manifold $M$, the manifold
$\widetilde{M}$ is also aspherical. By~Theorem~\ref{main2}, the manifold
$\widetilde{M}$ is diffeomorphic to a Heisenberg nilmanifold of dimension
$2n+1$. Hence $M$ is diffeomorphic to a finite quotient of a Heisenberg
nilmanifold.
\end{proof}
\noindent
Our main result and
the similar one for  the Kähler case provide
 evidence that exotic nilmanifolds do not admit as rigid
geometric structures as nilmanifolds.
Another result that points in the same direction was proved in a recent
article~\cite{anosov}, where it is shown
that
compact exotic nilmanifolds  admit no
 Anosov $\Z^r$-action without rank-one factor.

In the above-mentioned paper \cite{baues2020} on locally homogeneous aspherical Sasakian
manifolds, Baues and Kamishima deal with the regular case by showing that any compact
regular aspherical Sasakian manifold with solvable fundamental group is finitely
covered by a Heisenberg nilmanifold, as its Sasaki structure may be deformed to
a locally homogeneous one.

Despite the analogy between Sasakian and Kähler geometry, the proof of
Theorem~\ref{main2} is significantly different from the proofs
in~\cite{cortes,tralle}. The main obstacle to imitate these proofs is that
there is no suitable version of the Albanese map for Sasakian manifolds.

The main stages of the proof are the following.
In Section~\ref{sasakianmanifolds} we discuss two Tievsky models for compact
Sasakian manifolds. One of them is a subcomplex of the de Rham algebra of
$M$ and the other is a quotient of the former one.  Then we prove
that for every compact aspherical Sasakian manifold with nilpotent fundamental
group $\Gamma$, the Malcev envelope $G(\Gamma)$ of $\Gamma$ is isomorphic to the Heisenberg
group (\Cref{heisenberg}).
Further, we show that
there exists a
quasi-isomorphism $\rho \from \Omega^\bullet(G(\Gamma))^{G(\Gamma)} \to \Omega^\bullet(M)$
such that its image lies in the first Tievsky model of $M$ (\Cref{imquasiiso}).

In \Cref{topology}, we develop a theory that permits to mitigate the
absence of an Albanese map for Sasakian manifolds.
The main result of this section implies that
by modifying the above $\rho$, we can assume that it is the restriction of
$h^*\from \Omega^\bullet(G(\Gamma))^\Gamma \to \Omega^\bullet(M)$, where
$h\from M \to \Gamma \bs G(\Gamma)$
is a  homotopy equivalence.

In \Cref{PROOF}, we prove that $h$ is a diffeomorphism.
First, we deduce from \Cref{etaheis}, that there is a left-invariant $1$-form
$\eta_\heis\in \Omega^1(G(\Gamma))$  and a basic function $f$ on $M$ such that
$h^*(\eta_\heis) = \eta +
(df) \circ \varphi$.

In \Cref{subsection:Sasakian}, we introduce the notion of a $\beta$-twisted
map between Sasakian manifolds, which gives a relative version of second type
deformations of Sasakian manifolds. In the same section, we show that a map
$\psi \from X \to Y$
between two Sasakian manifolds is $(df)$-twisted with $f\in
C^\infty(X)$ if and only if the map
\begin{equation*}
\begin{aligned}
\psi_f \from X \times \R & \to Y\times \R\\
(x,t) & \mapsto ( \psi(x), f(x) + t)
\end{aligned}
\end{equation*}
is holomorphic.
We show that $\Gamma\bs G(\Gamma)$ can be
equipped with a Sasakian structure in such way that $h$ becomes $(df)$-twisted
(Claims~\ref{normalalmostcontact} and~\ref{hdftwisted}).

It is straightforward that $h$ is a diffeomorphism if and only if $h_f$ is a
diffeomorphism.
It is also not very difficult  to show that $h_f$ is surjective, universally closed and proper.
Then using the embedded Hironaka resolution of singularities we show that $h_f$ is a finite map.
This, combined with  several deep
results from complex analytic geometry, implies that $h_f$  is a
biholomorphism, and thus a diffeomorphism.

The paper is organized as follows. Section~\ref{prelim} contains the necessary
preliminaries about Sasakian manifolds, Sullivan CDGAs and nilmanifolds.
In Section~\ref{topology} we prove a rather general result on maps from a
manifold onto its aspherical nilpotent approximations.
In Section~\ref{sasakianmanifolds}  we discuss Tievsky models for compact
Sasakian manifolds, and establish the existence of the quasi-isomorphism
$\rho$ discussed above.  In the final section we complete the
proof of Theorem~\ref{main2}.

\section{Preliminaries} \label{prelim}
\subsection{\it Frölicher-Nijenhuis calculus} \label{FNc}
For a general treatment of
Frölicher-Nijenhuis calculus, we refer  to~\cite{kolarbook}. Given a smooth manifold
$M$ and a vector bundle $E$ over $M$, for every $\psi \in \Omega^k(M, TM)$ one defines an operator
$i_\psi\colon \Omega^\bullet(M, E)\to \Omega^\bullet(M, E) $ of degree $k-1$.
If $\xi$ is a vector field and $\phi$ is an endomorphism of $TM$, the general
definition specializes to
\begin{equation*}
\begin{aligned}
& i_\xi \omega (X_1,\dots, X_{p-1})  = \omega (\xi, X_1, \dots, X_p)\\
& i_\phi \omega (X_1,\dots, X_p)  = \sum_{j=1}^p \omega ( X_1, \dots,\phi X_j,
\dots,  X_p),
\end{aligned}
\end{equation*}
where $\omega \in \Omega^p(M, E)$. In the particular case when $E$ is the
trivial one-dimensional vector bundle over $M$, we get operators $i_\psi$ on
$\Omega^\bullet(M)$.  Next, we define the operators $\lie_\psi$ on
$\Omega^\bullet (M)$ by $\lie_\psi := i_\psi d + (-1)^{k} d i_\psi $.
The Frölicher-Nijenhuis bracket on $\Omega^\bullet(M, TM)$ is defined by the
characteristic property $\left[ \lie_{\psi_1}, \lie_{\psi_2} \right] =
\lie_{[\psi_1,\psi_2]_{FN}}$. Here on the left side $[\ ,\ ]$ stands for the
graded commutator of operators.
\subsection{\it Sasakian manifolds}
\label{subsection:Sasakian}
\emph{An almost contact structure} on a manifold $M$ is a triple $(\varphi, \xi,
\eta)$ where $\varphi\in \Omega^1(M, TM)$, $\xi$ is a vector field and $\eta \in
\Omega^1(M)$ such that $\varphi^2 = - \id + \xi \otimes \eta$ and $\eta(\xi)=1$.
Given an almost contact structure on $M$, one can define an almost complex
structure $J$ on $M \times \R$ by
\begin{equation}
\label{Jalmostcontact}
\begin{aligned}
J\left(X, a\ddt\right)  = \left( \varphi X - a \xi , \eta(X) \ddt \right).
\end{aligned}
\end{equation}
For every almost contact structure one has
\begin{equation}
\label{etaphixi}
\begin{aligned}
\eta\circ \varphi =0,\quad \varphi \xi =0.
\end{aligned}
\end{equation}

We say that $(\varphi, \xi, \eta)$ is \emph{normal} if $J$ is integrable.
It can be verified by simple computation (cf.~\cite[Sec.~6.1]{blair})
that the above definition is equivalent to the vanishing of four tensors:
\begin{equation}
\label{sastensors}
\begin{aligned}
\lie_\xi \eta=0, \quad \lie_\xi\varphi=0,\quad \lie_\varphi \eta=0,\quad
[\varphi,\varphi]_{FN} + 2 d\eta\otimes \xi=0.
\end{aligned}
\end{equation}
Actually, it can be shown that the vanishing of $\left[ \varphi, \varphi
\right]_{FN} + 2d\eta \otimes \xi$ implies that $(\varphi, \xi, \eta)$ is
normal.

On every normal almost contact  manifold, one also  has
\begin{equation}
\label{iphideta}
\begin{aligned}
i_\varphi d\eta =0,\quad i_\xi d\eta =0.
\end{aligned}
\end{equation}
For a normal almost contact manifold  $(M,\varphi, \xi,\eta)$ define the symmetric
$2$-tensor $g$ by
\begin{equation*}
\begin{aligned}
g (X,Y) = \frac12 d\eta(\varphi X, Y) + \eta(X) \eta(Y).
\end{aligned}
\end{equation*}
A normal almost contact manifold $(M,\varphi, \xi, \eta)$
is called \emph{Sasakian} if $g$ is a Riemannian metric.
It can be checked (cf. \cite[Section~6.5]{blair}) that $M$ is Sasakian if and only
if $\tilde{g} := e^{2t}( g + dt^2)$ and $J$ defined by~\eqref{Jalmostcontact}
give a Kähler structure on $M \times \R$.
\begin{remark}
\label{exact} The corresponding Kähler  form $e^{2t}(\Phi + dt
\wedge \eta ) $ is exact. Indeed it equals to one-half of $d ( e^{2t}  \eta)$.
\end{remark}
Following the development in~\cite[Sec.~7.5.1]{galicki}, we say that
an almost contact structure $(\varphi',
\xi,\eta')$ is a \emph{second type deformation} of an almost contact structure
$(\varphi,\xi,\eta)$ on $M$, if  there is a basic $1$-form $\beta$ on
$M$ such that  $\eta' = \eta - i_\varphi \beta$ and $\varphi'
= \varphi - \xi \otimes \beta$.

Let $M$ and $N$ be an almost contact manifolds and $\beta$ a basic $1$-form
on $M$.
We say that a smooth map $h\from M \to N$ is \emph{$\beta$-twisted} if
 \begin{equation*}
\begin{aligned}
& \eta_M =  h^*\eta_N - i_{\varphi_M} \beta,\quad Th\circ \xi_M = \xi_N \circ
h,
\\
&  Th \circ ( \varphi_M + \xi_M \otimes \beta)
  = \varphi_N \circ  Th.
\end{aligned}
\end{equation*}
This definition is designed so that
if $h$ is a  $\beta$-twisted diffeomorphism, then the almost contact structure on
$N$, transferred from $M$ via $h$, is a second type deformation of $(\varphi_N,
\xi_N, \eta_N)$.
Also, if the identity map from $(M,\varphi',\xi,\eta')$ to $(M,\varphi,\xi,\eta)$ is
$\beta$-twisted, then $(\varphi',\xi,\eta')$ is
a second type deformation of $(\varphi, \xi,\eta)$.

For $h\from M \to N$ and $f\in C^\infty(M)$, define
\begin{equation}
\label{hf}
\begin{aligned}
h_f \from M\times \R & \to N\times \R \\
(x,t) & \mapsto ( h(x) , f(x) +t).
\end{aligned}
\end{equation}
\begin{proposition}
\label{hfholomorphicnew}
Let $M$ and $N$ be almost contact manifolds, $h\from M\to N$ a smooth
map, and $f$ a smooth function on $M$. The map $h_f$ is holomorphic if and only if
$f$, and hence also $df$, is basic  and $h$ is $df$-twisted.
\end{proposition}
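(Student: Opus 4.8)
The plan is to unwind the definition of holomorphicity, namely that the differential $Th_f$ intertwines the almost complex structures $J_M$ and $J_N$ built from the two almost contact structures via~\eqref{Jalmostcontact}, and to read off the resulting pointwise identities. First I would compute the differential of $h_f$. Writing a tangent vector to $M\times\R$ at $(x,t)$ as $(X,a\ddt)$ with $X\in T_xM$ and $a\in\R$, differentiating $h_f(x,t)=(h(x),f(x)+t)$ gives
\begin{equation*}
Th_f\left(X,a\ddt\right)=\left(Th(X),\,(df(X)+a)\ddt\right).
\end{equation*}

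Next I would impose the condition $Th_f\circ J_M=J_N\circ Th_f$ and evaluate both sides on $(X,a\ddt)$. Using~\eqref{Jalmostcontact} on the left gives $J_M(X,a\ddt)=(\varphi_M X-a\xi_M,\eta_M(X)\ddt)$, so that
\begin{equation*}
Th_f\circ J_M\left(X,a\ddt\right)=\left(Th(\varphi_M X)-a\,Th(\xi_M),\,\bigl(df(\varphi_M X)-a\,df(\xi_M)+\eta_M(X)\bigr)\ddt\right),
\end{equation*}
while applying $J_N$ after $Th_f$ yields
\begin{equation*}
J_N\circ Th_f\left(X,a\ddt\right)=\left(\varphi_N(Th(X))-(df(X)+a)\xi_N,\,\eta_N(Th(X))\ddt\right).
\end{equation*}

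Then I would compare these for all $X$ and all $a$, separating the terms linear in $a$ from those independent of it. In the first ($TN$-valued) component the coefficient of $a$ gives $Th\circ\xi_M=\xi_N\circ h$, and in the second ($\R$-valued) component it gives $df(\xi_M)=0$, i.e.\ $\xi_M f=0$; this last identity is exactly the statement that $f$ — and therefore also $df$, since $i_{\xi_M}df=\xi_M f$ and $\lie_{\xi_M}df=d(\xi_M f)$ — is basic. The $a$-independent part of the second component reads $\eta_M(X)+df(\varphi_M X)=\eta_N(Th(X))$, which, recognizing $df(\varphi_M X)=(i_{\varphi_M}df)(X)$, is precisely $\eta_M=h^*\eta_N-i_{\varphi_M}df$. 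Finally, the $a$-independent part of the first component is $Th(\varphi_M X)=\varphi_N(Th(X))-df(X)\xi_N$; substituting the already-established relation $\xi_N=Th(\xi_M)$ turns this into $Th(\varphi_M X+df(X)\xi_M)=\varphi_N(Th(X))$, that is $Th\circ(\varphi_M+\xi_M\otimes df)=\varphi_N\circ Th$. These are exactly the three defining conditions for $h$ to be $df$-twisted, together with the basicness of $f$, and since every step is an equivalence the argument is reversible, proving both directions.

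The computation itself is routine rather than deep; the only points requiring genuine care are the bookkeeping of the correction term $\xi_M\otimes df$ and the observation that the $\varphi$-identity only acquires its $df$-twisted form after the relation $Th\circ\xi_M=\xi_N\circ h$, coming from the $a$-linear part, has been substituted. I also need the elementary equivalence between $f$ and $df$ being basic, which is immediate from the Cartan-type formula $\lie_\xi=i_\xi d+d\, i_\xi$ together with $\xi_M f=0$; this is what justifies the phrase ``and hence also $df$'' in the statement.
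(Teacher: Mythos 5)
Your proposal is correct and follows essentially the same route as the paper: both write $J_{M\times\R}$, $J_{N\times\R}$ and $Th_f$ in block form relative to the splitting $T(L\times\R)=TL\oplus T\R$, equate $Th_f\circ J_{M\times\R}$ with $J_{N\times\R}\circ Th_f$, and read off the four identities (the $\xi$-compatibility, $\xi_M f=0$, the $\eta$-relation, and the $\varphi$-relation after substituting $Th(\xi_M)=\xi_N$) that together are exactly basicness of $f$ plus the $df$-twisted conditions. The only cosmetic difference is that you evaluate on a general vector $(X,a\,d/dt)$ and sort by powers of $a$, whereas the paper equates the matrix entries directly; the content is identical.
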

\begin{proof}
For $L \in \left\{ M, N \right\}$ write the elements of $T(L\times \R)$ in the
form $\binom{X}{Y}$ with $X \in TL$ and $Y\in T\R$. Then
for $x\in M$, $y\in N$, $t\in \R$
\begin{equation*}
\begin{aligned}
J_{M\times \R,(x,t)} = \sm{ \varphi_{M,x} & -\xi_{M,x}
\otimes dt
\\ \ddt\otimes \eta_{M,x} & 0 },
 \quad
J_{N\times \R,(y,t)} = \sm{ \varphi_{N,y} & -\xi_{N,y}\otimes dt
\\ \ddt \otimes \eta_{N,y} & 0 }.
\end{aligned}
\end{equation*}
and
\begin{equation*}
\begin{aligned}
T_{(x,t)} h_f = \sm{T_x h & 0 \\ T_x f & T_t \tau_{f(x)} },
\end{aligned}
\end{equation*}
where $\tau_s \from \R\to \R$ is the translation by $s$.
Now $Th_f \circ J_{M\times \R}$ equals to  $J_{N\times \R}\circ Th_f$ at $(x,t)$
 if and only if
\begin{equation*}
\begin{aligned}
T_x h \circ \varphi_{M,x} & = \varphi_{N,h(x)}  \circ T_x h - \xi_{N,h(x)}
\otimes (df)_x,&
T_x h (\xi_{M,x})  & = \xi_{N,h(x)}  \\
 T_x f \circ \varphi_M & + \ddt\otimes \eta_{M,x} = \ddt\otimes \eta_{N,h(x)}  \circ T_x h, &
T_x f(\xi_{M,x}) & =0.
\end{aligned}
\end{equation*}
The last equation is equivalent to $f$ being basic. The first three equations
are equivalent to the definition $df$-twisted map.
\end{proof}
Let $(M,\varphi,\xi,\eta)$ be a normal almost contact manifold and $\foliation$
be the one-dimensional foliation on $M$ generated by $\xi$.
The manifold $M$ is transversely complex
with respect to $\foliation$.
Namely, for each foliated chart $U$ of $(M,\foliation)$, the endomorphism
$\varphi$ of $TM$ induces a complex structure on the tangent bundle of the leaf space $U/\foliation$.
Write $\pi_U$ for the projection from $U$ onto $U/\foliation$.
The induced complex structure $J$ on $U/\foliation$ is uniquely characterized by
$T\pi_U (\varphi X) = J (T\pi_U (X))$ for all $x\in U$ and $X \in T_x U$.

If $U$ is sufficiently small, the set $U/\foliation$ admits holomorphic
coordinates $z_1$,\dots, $z_n$.
Using these coordinates one can introduce the operators $\partial$ and
$\bar\partial$ on the complexified de Rham complex $
\Omega^\bullet(U/\foliation)_\C$ in the usual way.
The operator $d^c$ on $\Omega^\bullet(U/\foliation)_\C$ is frequently defined as
$d^{c}= i(\bar\partial - \partial)$.
It should be noticed that it can be identified with $(-1)\lie_J$.
In particular, $d^c$ preserves $\Omega^\bullet(U/\foliation)$.

The pull-back map $\pi_U^*$ induces an isomorphism between
$\Omega^\bullet(U/\foliation)$ and the basic de Rham complex
$\Omega^\bullet_B(U,\foliation)$.
These isomorphisms permit to glue the operators $d^c$ defined on
$\Omega^*(U/\foliation)$ for different $U$ to a globally defined operator
$d_B^c$ on $\Omega^\bullet_B(M,\foliation)$. The operator $d_B^c$ is uniquely
characterized by the property
\begin{equation*}
\begin{aligned}
 ( d_B^c)( \omega|_U) = \pi_U^* \left( d^c \left( (\pi_U^*)^{-1} \omega|_U
\right) \right)
\end{aligned}
\end{equation*}
for all $\omega \in \Omega_B^\bullet(M,\foliation)$ and sufficiently small
foliated charts $U$.
The computation
\begin{equation*}
\begin{aligned}
( i_\varphi \pi_U^* \omega) (X) = \omega ( (T\pi_U) ( \varphi (X)) ) =
\omega ( J ((T\pi_U) (X))) = \pi_U^* (i_J \omega) (X)
\end{aligned}
\end{equation*}
implies
that $\pi_U^* \circ d^c = - \pi_U^* \circ \lie_J = -
\lie_\varphi \circ \pi_U^*$.
 We see that $d^c_B$ coincides with the restriction of $(-1)\lie_\varphi$ to
basic forms.

\subsection{\it Sullivan models}
A~\emph{differential graded algebra} is
a graded algebra
$A = \bigoplus_{k\ge 0} A_k$ equipped with a derivation $d$ of degree one such
that $d^2=0$. It is \emph{commutative}
if
$ab = (-1)^{k\ell} ba$,
for all $a \in A_k$ and~$b\in A_\ell$.

The motivating example of a commutative differential graded algebra (CDGA) is provided by the de Rham algebra
$\Omega^\bullet(M)$ of a manifold $M$.
For CDGAs $A$ and $B$, a \emph{homomorphism} $f\colon A\to B$ of CDGAs is a
degree preserving homomorphism of algebras which commutes with $d$. Each
homomorphism of CDGAs $f\colon A\to B$ induces a homomorphism
$H^\bullet(f) \colon H^\bullet(A) \to H^\bullet(B)$ of graded algebras by
$H^\bullet(f)([a]) = [f(a)]$ for $a\in A$ such that $da =0$.
We say that a homomorphism of CDGAs $f\colon A\to B$ is a quasi-isomorphism if
$H^\bullet(f)$ is an isomorphism.
Two CDGAs $A$ and $B$ are said to be \emph{quasi-isomorphic} if there are  CDGAs
\begin{equation*}
\begin{aligned}
A_0=A, A_1, \dots, A_{2k} = B,
\end{aligned}
\end{equation*}
and quasi-isomorphisms $f_j\colon A_{2j+1} \to A_{2j}$, $h_j\colon A_{2j+1} \to
A_{2j+2}$ for $j$ between $0$ and $k-1$.

A CDGA $A$ is called a \emph{Sullivan} algebra, if there is a generating set
of homogeneous elements $a_i\in A$ indexed by a well  ordered set $I$, such
that
\begin{enumerate}[(i)]
\item $da_k$ lies in the
subalgebra generated by the elements $a_j$ with $j< k$;
\item $A$  has a basis consisting of  the elements
\begin{equation*}
\begin{aligned}
a_{j_1}^{r_1} \dots a_{j_n}^{r_n},
\end{aligned}
\end{equation*}
with $j_1 < \dots < j_n$,
 $r_k=1$ if degree of $a_k$ is odd, and $r_k \in
\N$ if degree of $a_k$ is even.
\end{enumerate}
The motivating example of Sullivan CDGA was the Chevalley-Eilenberg
algebra $\bigwedge \liealg^*$ for nilpotent Lie algebra $\liealg$.
We will need the following lifting property of Sullivan CDGAs.
\begin{proposition}[{cf.  \cite[Proposition 12.9 ]{felixhalperin}}] \label{lifting}
Suppose $q\colon A\to B$ is a quasi-isomorphism of CDGAs and $f \colon D \to B
$ a homomorphism of CDGAs. If $D$ is Sullivan, then there is a homomorphism
$h\colon D \to A$ of CDGAs, such that $H^\bullet(q)\circ H^\bullet(h) =
H^\bullet (f)$.
\end{proposition}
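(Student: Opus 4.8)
The plan is to construct $h$ by transfinite induction along the Sullivan filtration of $D$, first in the special case where $q$ is surjective and then reducing the general case to this one. For $k\in I$ let $D_{<k}$ be the subalgebra generated by $\{a_j : j<k\}$; by condition~(i) each $D_{<k}$ is a sub-CDGA and $D=\bigcup_k D_{<k}$. Since $D$ is free as a commutative graded algebra on the $a_i$, a CDGA homomorphism out of $D$ is determined by the values $h(a_k)\in A$, subject only to $d\,h(a_k)=h(da_k)$; as $da_k\in D_{<k}$, this organizes into a transfinite recursion on $I$ in which all the work happens at successor steps, where I must produce a suitable $h(a_k)$.

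First I would treat the case where $q$ is a surjective quasi-isomorphism. Then $\ker q$ is acyclic, because the long exact cohomology sequence of $0\to\ker q\to A\to B\to 0$, together with $H^\bullet(q)$ being an isomorphism, forces $H^\bullet(\ker q)=0$. Here I aim for the strict conclusion $q\circ h=f$. At the inductive step, with $h$ already defined on $D_{<k}$ and $q\circ h=f$ there, I lift $f(a_k)$ to some $a\in A$ using surjectivity and note that $da-h(da_k)$ is a cocycle lying in $\ker q$; acyclicity gives $da-h(da_k)=d\,c$ with $c\in\ker q$, and then $h(a_k):=a-c$ satisfies both $d\,h(a_k)=h(da_k)$ and $q\,h(a_k)=f(a_k)$.

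To reduce the general case I would factor $q$ through a mapping path object. Let $\Lambda(t,dt)$ be the CDGA of polynomial forms on the interval, acyclic in positive degree since $\K$ has characteristic zero, with evaluations $\epsilon_0,\epsilon_1\colon B\otimes\Lambda(t,dt)\to B$; both are surjective quasi-isomorphisms and induce the same map on cohomology. Forming the pullback $P:=A\times_B\bigl(B\otimes\Lambda(t,dt)\bigr)$ along $q$ and $\epsilon_0$ yields maps $A\xrightarrow{\,j\,}P\xrightarrow{\,p\,}B$ with $p\circ j=q$, where $j(a)=(a,q(a)\otimes 1)$ and $p$ is the $\epsilon_1$-projection. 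The projection $\pi_A\colon P\to A$ is a surjective quasi-isomorphism because its kernel is $\ker\epsilon_0$; since $\pi_A\circ j=\id$ and $p\circ j=q$, two-out-of-three shows $j$, and then $p$, are quasi-isomorphisms, while $p$ is surjective as $b=p(0,b\otimes t)$. Applying the surjective case to $p$ and $f$ gives $\tilde h\colon D\to P$ with $p\circ\tilde h=f$, and I set $h:=\pi_A\circ\tilde h$. The pullback relation $q\circ\pi_A=\epsilon_0\circ\pi_{B\otimes\Lambda(t,dt)}$ and the equality $H^\bullet(\epsilon_0)=H^\bullet(\epsilon_1)$ then yield $H^\bullet(q)\circ H^\bullet(h)=H^\bullet(p)\circ H^\bullet(\tilde h)=H^\bullet(f)$, as required.

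The main obstacle is precisely the non-surjectivity of $q$: the obstruction to extending $h$ over a new generator lives in $H^\bullet(\ker q)$, which vanishes only when $q$ is surjective. The path-object factorization exists exactly to trade $q$ for the surjective quasi-isomorphism $p$ — at the modest cost of obtaining agreement only on cohomology rather than a strict lift — after which every obstruction disappears and the induction runs without impediment.
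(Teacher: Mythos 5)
Your argument is correct and is essentially the standard proof of this lifting property (surjective case by killing obstructions in the acyclic ideal $\ker q$ via transfinite induction over the Sullivan generators, then reduction of the general case through the path-object factorization $A\to P\to B$); the paper itself gives no proof and simply cites Proposition 12.9 of Félix--Halperin, whose argument is exactly the one you reproduce. No gaps.
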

An algebra $A$ quasi-isomorphic to $\Omega^\bullet(M)$ is called a \emph{real
homotopy model} of $M$. Suppose $M$ and $N$ are homotopy equivalent smooth
manifolds.  Let $F\colon M \to N$ and $G\colon N\to M$ be mutually inverse
homotopy equivalences. By   Whitney Approximation Theorem
(cf.~\cite[Thm.~6.26]{leebook}), there are smooth maps $\widetilde{F} \colon M \to N$ and
$\widetilde{G} \colon N \to M$ homotopy equivalent to $F$ and $G$,
respectively. Then $\widetilde{F}$ and $\widetilde{G}$ are mutually inverse
smooth homotopy equivalences. This implies that $\widetilde{F}^* \colon
\Omega^\bullet(N)\to \Omega^\bullet(M)$ and $\widetilde{G}^* \colon
\Omega^\bullet(M) \to \Omega^\bullet(N)$ are quasi-isomorphisms.
Hence the following proposition holds.
\begin{proposition}
\label{realhomotopymodels}
Suppose $M$ and $N$ are homotopy equivalent smooth manifolds. Then there are
quasi-isomorphisms $\Omega^\bullet(M) \to \Omega^\bullet(N)$ and
$\Omega^\bullet(N) \to \Omega^\bullet(M)$. In particular, $M$ and $N$ have the
same real homotopy models.
\end{proposition}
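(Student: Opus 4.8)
The plan is to combine two standard facts: the de Rham functor $\Omega^\bullet(-)$ turns every smooth map into a homomorphism of CDGAs, and de Rham cohomology is a homotopy invariant, so smoothly homotopic maps induce the same homomorphism on $H^\bullet$. Granting these, a quasi-isomorphism is nothing but a smooth map whose pullback induces an isomorphism on cohomology, and the statement reduces to producing smooth maps in both directions that are mutually inverse up to homotopy.

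First I would fix mutually inverse homotopy equivalences $F\from M\to N$ and $G\from N\to M$, so that $G\circ F\simeq \id_M$ and $F\circ G\simeq \id_N$. Since pullback of differential forms is only defined along smooth maps, the next step is to replace $F$ and $G$ by smooth maps. By the Whitney Approximation Theorem (cf.~\cite[Thm.~6.26]{leebook}) every continuous map between smooth manifolds is homotopic to a smooth one; thus I obtain smooth $\widetilde{F}\from M\to N$ and $\widetilde{G}\from N\to M$ with $\widetilde{F}\simeq F$ and $\widetilde{G}\simeq G$. Composition respects homotopy, so $\widetilde{G}\circ\widetilde{F}\simeq G\circ F\simeq \id_M$ and likewise $\widetilde{F}\circ\widetilde{G}\simeq \id_N$; hence $\widetilde{F}$ and $\widetilde{G}$ are mutually inverse smooth homotopy equivalences.

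Now $\widetilde{F}^*\from \Omega^\bullet(N)\to \Omega^\bullet(M)$ and $\widetilde{G}^*\from \Omega^\bullet(M)\to \Omega^\bullet(N)$ are CDGA homomorphisms, because pullback commutes with $d$ and with the wedge product. Applying $H^\bullet(-)$ and using functoriality together with homotopy invariance, I would compute $H^\bullet(\widetilde{F}^*)\circ H^\bullet(\widetilde{G}^*) = H^\bullet\big((\widetilde{G}\circ\widetilde{F})^*\big)$, which is the identity on $H^\bullet(M)$ since $\widetilde{G}\circ\widetilde{F}\simeq\id_M$; symmetrically the other composite is the identity on $H^\bullet(N)$. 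Thus $H^\bullet(\widetilde{F}^*)$ and $H^\bullet(\widetilde{G}^*)$ are mutually inverse isomorphisms, so both $\widetilde{F}^*$ and $\widetilde{G}^*$ are quasi-isomorphisms. This yields the asserted quasi-isomorphisms in both directions, and since quasi-isomorphic CDGAs share their real homotopy models by the transitive, zig-zag definition recalled above, $M$ and $N$ have the same real homotopy models.

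There is no genuinely deep obstacle here; the one point that needs care is the passage from continuous to smooth data. Concretely, one must invoke not merely the existence part of Whitney approximation but its homotopy refinement: the smoothing can be taken within the prescribed homotopy class, and a homotopy between two maps can itself be smoothed. This is exactly what guarantees that $\widetilde{F}$ and $\widetilde{G}$ remain mutually inverse after smoothing, and hence that the induced maps on cohomology are isomorphisms rather than arbitrary homomorphisms.
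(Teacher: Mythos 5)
Your argument is correct and follows essentially the same route as the paper: smooth the mutually inverse homotopy equivalences via the Whitney Approximation Theorem, observe that the smoothed maps remain mutually inverse homotopy equivalences, and conclude that their pullbacks are quasi-isomorphisms by homotopy invariance of de Rham cohomology. The extra care you take about the homotopy refinement of Whitney approximation is exactly the point the paper relies on implicitly.
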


\subsection{Nilmanifolds}
Let $\Gamma$ be a torsion-free finitely generated nilpotent group.
It
was shown in~\cite{malcev,malcevtr}, that such $\Gamma$
can be realized as a lattice in a connected and simply connected nilpotent Lie
group $G(\Gamma)$ which is unique up to isomorphism.
We denote the canonical embedding of $\Gamma$ into $G(\Gamma)$ by
$\nu_\Gamma$.

We will use the following extension principle for lattices in nilpotent Lie
groups.
\begin{theorem}
\label{extension}
Let $H$ and $G$ be connected and simply connected nilpotent Lie groups. If $\Gamma$ is a lattice in $H$,
then every homomorphism of groups $f \colon \Gamma \to G$  has  a unique
extension to a smooth homomorphism $\tilde{f} \colon H \to G$.
\end{theorem}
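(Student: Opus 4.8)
The plan is to reduce the statement to a question about Lie algebra homomorphisms and then to produce the extension geometrically, via the graph of $f$. First I would record the standard correspondence between smooth homomorphisms and Lie algebra homomorphisms. Since $H$ and $G$ are simply connected and nilpotent, the exponential maps $\exp_H\from\mathfrak{h}\to H$ and $\exp_G\from\mathfrak{g}\to G$ are diffeomorphisms and the group products are given by the (finite) Baker--Campbell--Hausdorff series; consequently $\tilde f\mapsto d\tilde f$ sets up a bijection between smooth homomorphisms $H\to G$ and Lie algebra homomorphisms $\mathfrak{h}\to\mathfrak{g}$, the inverse being $\phi\mapsto\exp_G\circ\phi\circ\log_H$. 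This is where uniqueness comes for free: if $\tilde f$ extends $f$, then for every $\gamma\in\Gamma$ we have $\exp_G\bigl(d\tilde f(\log_H\gamma)\bigr)=\tilde f(\gamma)=f(\gamma)$, whence $d\tilde f(\log_H\gamma)=\log_G f(\gamma)$. As $\Gamma$ is a lattice, $\log_H\Gamma$ spans $\mathfrak{h}$ over $\R$, so the linear map $d\tilde f$ is pinned down on all of $\mathfrak{h}$; hence $\tilde f$, being determined by its differential, is unique.

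For existence I would argue geometrically inside $H\times G$. Consider the graph $\Gamma^f:=\{(\gamma,f(\gamma)):\gamma\in\Gamma\}$, a subgroup of the simply connected nilpotent Lie group $H\times G$ that is carried isomorphically onto $\Gamma$ by the first projection; in particular $\Gamma^f$ is a finitely generated torsion-free nilpotent group, and it is discrete in $H\times G$ because its image $\Gamma$ is discrete in $H$. Let $\mathfrak{l}$ be the Lie subalgebra of $\mathfrak{h}\oplus\mathfrak{g}$ generated by $\log(\Gamma^f)$ and set $L:=\exp(\mathfrak{l})$, a closed connected simply connected nilpotent subgroup of $H\times G$. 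By the Malcev structure theory underlying the existence and uniqueness of $G(\Gamma)$, the discrete group $\Gamma^f$ is a lattice in $L$; in particular $\dim L$ equals the Hirsch length of $\Gamma^f$, which equals that of $\Gamma$, namely $\dim H$.

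It then remains to show that the first projection $p_1\from L\to H$ is an isomorphism, for then $\tilde f:=p_2\circ p_1^{-1}$, with $p_2\from L\to G$ the second projection, is a smooth homomorphism satisfying $\tilde f(\gamma)=p_2(\gamma,f(\gamma))=f(\gamma)$, as required. The image $p_1(L)=\exp_H\bigl(dp_1(\mathfrak{l})\bigr)$ is a closed connected subgroup of $H$ containing $p_1(\Gamma^f)=\Gamma$; since $\Gamma$ is cocompact, the space $H/p_1(L)$ is compact, and being diffeomorphic to a Euclidean space it must be a point, so $p_1(L)=H$. Thus $p_1$ is a surjective homomorphism with $\dim L=\dim H$, its kernel is discrete, and $p_1$ is a covering; as $L$ is connected and $H$ is simply connected, $p_1$ is an isomorphism. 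The main obstacle is concentrated in the geometric input of the second paragraph — that the discrete subgroup $\Gamma^f$ is actually a cocompact lattice in its Malcev closure $L$ — which is exactly the Malcev theory cited for $G(\Gamma)$; once this is granted, the covering argument closes the proof. (An alternative route, avoiding the graph construction, would be an induction on the nilpotency class of $H$ using the central extension $1\to Z(H)\to H\to H/Z(H)\to1$ and the intersections of $\Gamma$ with the rational central subgroups, but the graph argument seems cleaner.)
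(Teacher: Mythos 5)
Your proof is correct, but it is a genuinely different route from the paper's: the paper does not prove this statement at all, it simply cites Raghunathan's extension theorem for lattice homomorphisms (\cite[Theorem~2.11]{discrete}) and adds the one observation needed to upgrade the continuous extension there to a smooth one (continuous homomorphisms of Lie groups are automatically smooth). What you have written is, in effect, a self-contained reconstruction of the proof of that cited theorem: uniqueness via the fact that $\exp$ and $\log$ intertwine $\tilde f$ with $d\tilde f$ and that $\log_H\Gamma$ spans (indeed generates as a Lie algebra) $\mathfrak{h}$, and existence via the graph $\Gamma^f\subset H\times G$, its Mal'cev hull $L$, and the covering argument showing $p_1\from L\to H$ is an isomorphism. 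All the steps check out: $\Gamma^f$ is discrete because $p_1|_{\Gamma^f}$ is injective with discrete image; $p_1(L)=\exp_H(dp_1(\mathfrak{l}))$ is a closed connected subgroup (images of subalgebras under $\exp$ are closed in simply connected nilpotent groups), so cocompactness of $\Gamma$ forces $p_1(L)=H$; and equality of dimensions plus simple connectedness of $H$ makes the covering $p_1$ an isomorphism. The one nontrivial input you must still import from the literature — that a discrete subgroup of a simply connected nilpotent Lie group is a cocompact lattice in the minimal closed connected subgroup containing it, with dimension equal to its Hirsch length — is of essentially the same depth as the theorem the paper cites, so your argument does not make the result more elementary; what it buys is transparency (the reader sees exactly where nilpotency, simple connectedness, and the lattice hypothesis enter) at the cost of length. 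Either presentation is acceptable; if you keep yours, give a precise reference for the Mal'cev hull statement rather than the informal appeal to ``the Malcev structure theory underlying $G(\Gamma)$.''
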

\begin{proof}
This is essentially \cite[Theorem~2.11]{discrete}, but there the author claims
only the existence of continuous homomorphism $\tilde{f}\colon H \to G$ that
extends $f$. However, it
is known (cf. \cite[Theorem~3.39]{warner}) that  every continuous homomorphism of
Lie groups is smooth.
\end{proof}
 Denote  the \emph{nilmanifold} $\Gamma\backslash G(\Gamma)$ by $N_\Gamma$.
It is a manifold of type $K(\Gamma,1)$.
We will consider $N_\Gamma$ as a pointed manifold with the base point $\Gamma
e$.

\section{Approximation with nilmanifolds}
\label{topology}
The main result of this section is more general than needed for the proof
of \Cref{main2}.
For simplicity, the reader may assume that $M$ is an aspherical nilpotent manifold through the
section.
We decided to present the results in a more general way in order to be able to
compare  them to the construction in~\cite{chen} and with the hope that
\Cref{main1} can be useful in another context.

Let $(M,x_0)$ be a pointed manifold and $\Pi$ its fundamental group.
For every homomorphism $q\from \Pi \to \Gamma$ there is a unique up to homotopy
map $h_q \from M \to N_\Gamma$ of pointed manifolds such that $\pi_1(h_q)
= q$ (cf.~\cite[Prop.~1B.9]{hatcher}).
By   Whitney Approximation Theorem (cf.~\cite[Thm.~6.26]{leebook}), one can
always assume that $h_q$ is smooth, and any two such maps are connected by a
smooth homotopy.

The aim of this section is to give a parametrization of the homotopy class of smooth maps $h_q$
in the special case when $M$ is compact and $q$ is the canonical projection
onto $(\Pi/\Pi_k)_{tf}$ for a fixed natural number $k$.
Here $\Pi_k$ denotes as usual the $k$th member of lower central series of $\Pi$ and the
subscript \emph{tf} indicates the torsion-free part of a nilpotent group.
Denote $(\Pi/\Pi_k)_{tf}$ by $\Gamma$ and write $\liealg_\Gamma$ for the Lie
algebra of $G(\Gamma)$.
We will use $\liealg_\Gamma$-valued forms
on $M$ for the
parametrization.

We start by recalling some facts about Lie algebra-valued $1$-forms.
Let $G$ be a Lie group and $\liealg$ its Lie algebra.
We identify $\liealgdual$ with the left-invariant $1$-forms on $G$ and
$\extalg \liealgdual$ with the subcomplex $\derham\bullet(G)^G$ of all left-invariant
forms on $G$.

Let  $\omega \in \Omega^1(M,\liealg)$. The form
$\omega$ corresponds to a unique linear map $\tr{\omega} \from \liealg^*
\to \Omega^1(M)$, which extends to the unique homomorphism
\begin{align*}
\tr{\omega} \from \extalg \liealgdual \to \derham\bullet(M)
\end{align*}
of graded algebras.
If $h\from N \to M$ is a smooth map, then $h^* \omega\in \Omega^1(N,\liealg)$
corresponds to the linear map $h^* \circ \tr{\omega}$, i.e.\
\begin{equation}
\label{wthomega}
\begin{aligned}
\tr{h^*\omega} = h^* \circ \tr{\omega}.
\end{aligned}
\end{equation}
 The map $\tr{\omega}$ is a homomorphism of chain
complexes if and only if $\omega$ is \emph{flat}, i.e.\ if and only if it
satisfies the Maurer-Cartan equation
\begin{equation*}
\begin{aligned}
d\omega + \frac12 [\omega,\omega]=0.
\end{aligned}
\end{equation*}
Here $[\omega,\omega] \in\derham2(M,\liealg)$ is defined by $[\omega,\omega](X,Y) = 2
[\omega(X), \omega(Y)]$.

In particular, the inclusion $\extalg \liealgdual = \Omega(G)^G \to \derham\bullet(G)$
corresponds to a flat form $\mu_G \in \derham1(G,\liealg)$, called the
\emph{Maurer-Cartan form on $G$}.
We will later use that the form $\mu_{G(\Gamma)}$ is left invariant.

Given a smooth map $h \colon M\to G$, the homomorphism of CDGAs $h^*\circ
\tr{\mu}_G \from \extalg \liealgdual \to \derham\bullet(M)$
corresponds to the flat form $h^*\mu_G \in \derham1(M,\liealg)$.
The form $h^* \mu_G$ is called the \emph{Darboux derivative}
of $h$. The following result shows that in the case $M$ is simply connected,
every flat form is the Darboux derivative of a smooth map from $M$ to
$G$, unique up to translation.
\begin{theorem}[{\cite[Section 3.7]{sharpe}}]
\label{sharpe}
Let $(X,x_0)$ be a pointed simply connected manifold
 and $\omega$ a flat $\liealg$-valued
$1$-form on $X$.
For each $g\in G$ there
is a unique smooth map $P_{g,\omega}\colon X \to G $ such that
$P_{g,\omega}$ sends the base point of $X$ to $g$
 and $P^*_{g,\omega}\mu_G = \omega$. Moreover, $L_{g'} \circ P_{g,\omega} = P_{g'g,\omega}$ for any  $g'\in
G$.
\end{theorem}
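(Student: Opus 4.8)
The plan is to realize $P_{g,\omega}$ as a section of the trivial bundle $X\times G \to X$ whose graph is a leaf of a suitable foliation. First I would introduce on the product $X\times G$ the $\liealg$-valued $1$-form
\[
\theta = \pr_G^*\mu_G - \pr_X^*\omega,
\]
where $\pr_X$ and $\pr_G$ are the two projections. Since $\mu_G$ restricts to a linear isomorphism on each tangent space to a $G$-fibre, the kernel $E := \ker\theta$ is a distribution of rank $\dim X$ that is everywhere transverse to the fibres; in particular $\pr_X$ carries $E$ isomorphically onto $TX$ at every point.

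The first key step is to show that $E$ is involutive. Differentiating $\theta$ and using the Maurer--Cartan equation $d\mu_G = -\tfrac12[\mu_G,\mu_G]$ together with the flatness of $\omega$, one obtains
\[
d\theta = -\tfrac12\,\pr_G^*[\mu_G,\mu_G] + \tfrac12\,\pr_X^*[\omega,\omega].
\]
On $E$, where $\pr_G^*\mu_G$ and $\pr_X^*\omega$ agree, the two bracket terms cancel, so $d\theta$ vanishes on $E$ and Frobenius gives an integrable foliation whose leaves are the integral manifolds of $E$. I let $L$ denote the leaf through $(x_0,g)$.

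The second key step, where simple connectivity enters, is to prove that $\pr_X|_L\colon L \to X$ is a diffeomorphism. Since $E$ projects isomorphically onto $TX$, this restriction is a local diffeomorphism, which I would upgrade to a global one by a lifting argument. Every smooth path in $X$ starting at $x_0$ admits a unique horizontal lift to $L$; existence and uniqueness of the lift reduce to solving a time-dependent left-invariant ordinary differential equation on $G$, whose solutions exist for all time. Because $X$ is simply connected, any two paths with the same endpoints are homotopic rel endpoints, and the integrability of $E$ makes the endpoint of the lift invariant under such homotopies (trivial monodromy); hence the lift depends only on the terminal point of the path. This yields a global smooth section $s\colon X \to L\subset X\times G$ inverting $\pr_X|_L$, and I set $P_{g,\omega} := \pr_G\circ s$.

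Then $P_{g,\omega}(x_0)=g$ by construction, and $\theta|_L = 0$ pulls back along $s$ to $P_{g,\omega}^*\mu_G = \omega$, as required. For uniqueness, any map satisfying these two properties has its graph an integral manifold of $E$ through $(x_0,g)$, hence contained in $L$; being a section over $X$, it must coincide with $s$. For the moreover part, the left-invariance of $\mu_G$ gives $(L_{g'}\circ P_{g,\omega})^*\mu_G = P_{g,\omega}^*(L_{g'}^*\mu_G) = P_{g,\omega}^*\mu_G = \omega$, while $(L_{g'}\circ P_{g,\omega})(x_0)=g'g$, so uniqueness forces $L_{g'}\circ P_{g,\omega}=P_{g'g,\omega}$. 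I expect the main obstacle to be the second step: turning the local diffeomorphism $\pr_X|_L$ into a global one, since this is precisely where the flatness-driven triviality of the monodromy must be combined with $\pi_1(X)=0$. The Frobenius computation of the first step, though central, is routine once $\theta$ has been set up.
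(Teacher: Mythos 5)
The paper does not prove this statement itself---it is quoted directly from Sharpe (Section 3.7)---and your argument is precisely the standard proof given there: the graph distribution $\ker(\pr_G^*\mu_G - \pr_X^*\omega)$ on $X\times G$, involutivity via the Maurer--Cartan equation and flatness, completeness of path lifting from the left-invariant ODE on $G$, and trivial monodromy from $\pi_1(X)=0$. The proof is correct and complete, including the uniqueness and equivariance parts.
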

We write $\widetilde{M}$ for the universal cover of $M$ and
$\pi$ for the projection from $\widetilde{M}$ to $M$.
We fix a point $\widetilde{x}_0\in \widetilde{M}$ over $x_0$ and consider
$\widetilde{M}$ as a pointed manifold with the base point $\widetilde{x_0}$.
If $\omega$ is a flat $\liealg$-valued $1$-form on $M$, then $\pi^*\omega$ is a
flat $1$-from in $\Omega^1(\widetilde{M},\liealg)$.
Then, $P^*_{e,\pi^*\omega} \mu_G = \pi^*\omega$ and \eqref{wthomega}  imply that
\begin{equation}
\label{picircomega}
\begin{aligned}
\pi^*\circ
\tr{\omega} = P^*_{e,\pi^*\omega}|_{\bigwedge \liealg^*}.
\end{aligned}
\end{equation}

\begin{corollary}
\label{rhoomega}
Let $\omega$ a flat $\liealg$-valued form on a manifold $M$.
There is a homomorphism of groups $\sigma_\omega \from \pi_1(M) \to G$, such that
for all $g \in \pi_1(M)$ and $x\in \widetilde{M}$
\begin{equation*}
\begin{aligned}
P_{e, \pi^*\omega} ( gx) = \sigma_\omega(g) P_{e,\pi^*\omega}(x).
\end{aligned}
\end{equation*}
\end{corollary}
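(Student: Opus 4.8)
The plan is to show that the map $P := P_{e,\pi^*\omega}\colon \widetilde{M}\to G$ furnished by \Cref{sharpe} is equivariant for a suitable homomorphism $\sigma_\omega$, and to read off that homomorphism directly from the values of $P$ on the orbit of the base point. For $g\in\pi_1(M)$ let $\lambda_g\colon \widetilde{M}\to\widetilde{M}$ denote the associated deck transformation, and set $\sigma_\omega(g) := P(g\widetilde{x}_0)$. The single geometric input is that deck transformations cover the identity, that is, $\pi\circ\lambda_g = \pi$ for every $g$; everything else will follow from the uniqueness clause of \Cref{sharpe}.

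First I would compute the Darboux derivative of the composite $P\circ\lambda_g$. Using functoriality of the pullback and the defining property $P^*\mu_G = \pi^*\omega$, one gets
\begin{equation*}
(P\circ\lambda_g)^*\mu_G = \lambda_g^*\pi^*\omega = (\pi\circ\lambda_g)^*\omega = \pi^*\omega .
\end{equation*}
Thus $P\circ\lambda_g$ is a smooth map $\widetilde{M}\to G$ whose Darboux derivative equals $\pi^*\omega$ and which sends the base point $\widetilde{x}_0$ to $P(g\widetilde{x}_0)=\sigma_\omega(g)$. By the uniqueness in \Cref{sharpe} this forces $P\circ\lambda_g = P_{\sigma_\omega(g),\pi^*\omega}$, and the translation clause $L_{g'}\circ P_{g,\omega}=P_{g'g,\omega}$ (applied with $g'=\sigma_\omega(g)$ and the original $P$) gives $P_{\sigma_\omega(g),\pi^*\omega} = L_{\sigma_\omega(g)}\circ P$. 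Evaluating at $x\in\widetilde{M}$ yields exactly $P(gx)=\sigma_\omega(g)P(x)$.

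It remains to check that $\sigma_\omega$ is a homomorphism, which is where the equivariance just proved pays off: writing the action as $(gh)x=g(hx)$, I would compute
\begin{equation*}
\sigma_\omega(gh)=P\bigl(gh\,\widetilde{x}_0\bigr)=P\bigl(g(h\,\widetilde{x}_0)\bigr)=\sigma_\omega(g)\,P(h\,\widetilde{x}_0)=\sigma_\omega(g)\,\sigma_\omega(h),
\end{equation*}
using equivariance with $x=h\,\widetilde{x}_0$. I do not expect a serious obstacle here: the argument is a transport of structure through \Cref{sharpe}, and the only point requiring genuine care is matching the left-translation convention of the theorem with the chosen (left) action of $\pi_1(M)$ on $\widetilde{M}$, so that the identity reads $P(gx)=\sigma_\omega(g)P(x)$ with the factors on the correct sides; a mismatch there would turn $\sigma_\omega$ into an antihomomorphism.
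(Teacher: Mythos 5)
Your proposal is correct and follows essentially the same route as the paper: both compute the Darboux derivative of $P_{e,\pi^*\omega}\circ L_g$, invoke the uniqueness and translation clauses of \Cref{sharpe} to get $P_{e,\pi^*\omega}\circ L_g = L_{\sigma_\omega(g)}\circ P_{e,\pi^*\omega}$, and read off the homomorphism property. You merely spell out the homomorphism check that the paper dismisses as routine.
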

\begin{proof}
Write $L_g$ for the operator on $\widetilde{M}$, that sends $x$ to $gx$. We
compute the Darboux derivative of $P_{e,\pi^*\omega} \circ L_g$. We get
\begin{equation*}
\begin{aligned}
(P_{e,\pi^*\omega}\circ L_g)^* \mu_G = L_g^* P^*_{e,\pi^*\omega} \mu_G = L_g^*
\pi^* \omega = (\pi\circ L_g)^* \omega = \pi^*\omega.
\end{aligned}
\end{equation*}
Denote the image of $\widetilde{x}_0$ under $P_{e,\pi^*\omega} \circ L_g$ by $\sigma_\omega(g)$.
By~\Cref{sharpe}, we get $P_{e,\pi^*\omega} \circ L_g = P_{\sigma_\omega(g),\pi^*\omega} = L_{\sigma_\omega(g)}
P_{e,\pi^*\omega}$. It is a routine to check that $\sigma_\omega$ is a homomorphism of
groups.
\end{proof}
Now we move to the special case when $G$ is the Lie group $G(\Gamma)$ with $\Gamma =
(\Pi/\Pi_k)_{tf}$.
As the pullback along the projection $G(\Gamma) \to N_\Gamma$ induces an
isomorphism between $\derham\bullet(N_\Gamma)$ and the set of $\Gamma$-invariant forms on
$G(\Gamma)$, we get an embedding of CDGAs $\psi_\Gamma \from \extalg
\liealg_\Gamma^* \to
\Omega^(N_\Gamma)$. It was shown in~\cite{nomizu} that
$\psi_\Gamma$ is a quasi-isomorphism. We denote by $\mu_\Gamma$ the corresponding
$\liealg_\Gamma$-valued $1$-form on $N_\Gamma$.

For every smooth map ${h} \from M \to N_\Gamma$ such that
$\pi_1(h)$ coincides with the canonical projection $q\from \Pi \to
\Gamma$, we get a $\liealg_\Gamma$-valued
$1$-form $h^* \mu_\Gamma$ on~$M$. The corresponding homomorphism of CDGAs is
$h^*\circ \psi_\Gamma$.
The induced map $H^1(h^*\circ \psi_\Gamma)\from
H^1(\liealg_\Gamma)\to H^1(M,\R)$ is an isomorphism.

\begin{proposition}\label{postnikovfirstfloor}
If a flat $\liealg_\Gamma$-valued $1$-form $\omega$ on $M$ is such that
$H^1(\tr{\omega}) \from H^1(\liealg_\Gamma)\to H^1(M)$ is
an isomorphism, then there is a unique automorphism $A_\omega \from G(\Gamma) \to
G(\Gamma)$ such that $ \sigma_\omega = A_\omega \circ \nu_\Gamma \circ q$.

\end{proposition}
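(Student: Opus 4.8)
The plan is to construct $A_\omega$ first as a smooth \emph{endomorphism} of $G(\Gamma)$ satisfying the required identity, and only then to promote it to an automorphism using the hypothesis on $H^1(\tr{\omega})$. First I would check that $\sigma_\omega$ from \Cref{rhoomega} factors through $q$. Indeed, $G(\Gamma)$ is a connected simply connected nilpotent Lie group, hence torsion-free, and its nilpotency class is at most that of $\Pi/\Pi_k$; therefore the $k$th term of its lower central series vanishes, so functoriality of the lower central series gives $\sigma_\omega(\Pi_k)=\{e\}$ and $\sigma_\omega$ descends to $\Pi/\Pi_k$. As $G(\Gamma)$ is torsion-free, the torsion subgroup of $\Pi/\Pi_k$ also lies in the kernel, so $\sigma_\omega=\bar\sigma_\omega\circ q$ for a unique homomorphism $\bar\sigma_\omega\from\Gamma\to G(\Gamma)$. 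Regarding $\bar\sigma_\omega$ as a homomorphism out of the lattice $\nu_\Gamma(\Gamma)\cong\Gamma$, \Cref{extension} extends it uniquely to a smooth endomorphism $A_\omega\from G(\Gamma)\to G(\Gamma)$ with $A_\omega\circ\nu_\Gamma=\bar\sigma_\omega$, so that $A_\omega\circ\nu_\Gamma\circ q=\sigma_\omega$. Uniqueness is then immediate: two solutions agree on $\nu_\Gamma(\Gamma)$ because $q$ is surjective, hence coincide by the uniqueness clause of \Cref{extension}.

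It remains to show this $A_\omega$ is invertible. Since $\exp\from\liealg_\Gamma\to G(\Gamma)$ is a diffeomorphism intertwining $A_\omega$ with its differential $dA_\omega$, the map $A_\omega$ is an automorphism if and only if $dA_\omega$ is a Lie-algebra automorphism. By a Nakayama-type argument running up the lower central series (an endomorphism of a finite-dimensional nilpotent Lie algebra that is onto modulo $[\liealg_\Gamma,\liealg_\Gamma]$ is onto, hence bijective), this holds if and only if the induced map on the abelianization $\liealg_\Gamma^{ab}=\liealg_\Gamma/[\liealg_\Gamma,\liealg_\Gamma]$ is invertible. Thus everything reduces to the invertibility of $(dA_\omega)^{ab}$, which is precisely what the hypothesis on $H^1(\tr{\omega})$ should deliver.

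The central step, which I expect to be the main obstacle, is to translate the de Rham statement ``$H^1(\tr{\omega})$ is an isomorphism'' into a group-theoretic statement about $\sigma_\omega$ on abelianizations. Under the identifications $H^1(\liealg_\Gamma)=(\liealg_\Gamma^{ab})^*$ (closed left-invariant $1$-forms) and $H^1(M,\R)=\Hom(\Pi,\R)$, a closed $\alpha\in\liealg_\Gamma^*$ is sent by $\tr{\omega}$ to a closed $1$-form whose period over a loop representing $g\in\Pi$ I would compute by lifting to $\widetilde M$ and applying \eqref{picircomega}: it equals the integral of the left-invariant form $\alpha$ along a path in $G(\Gamma)$ from $e$ to $\sigma_\omega(g)$, which, $\alpha$ being closed and $G(\Gamma)$ simply connected, is $\alpha(\log\sigma_\omega(g))$. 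Hence $H^1(\tr{\omega})$ is the transpose of the homomorphism $\ell_\omega\from\Pi\to\liealg_\Gamma^{ab}$, $g\mapsto\log\sigma_\omega(g)+[\liealg_\Gamma,\liealg_\Gamma]$, tensored with $\R$, and the hypothesis says exactly that $\ell_\omega\otimes\R$ is an isomorphism.

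Finally I would combine the two sides. From $\sigma_\omega=A_\omega\circ\nu_\Gamma\circ q$ together with $\log\circ A_\omega=dA_\omega\circ\log$ one reads off $\ell_\omega=(dA_\omega)^{ab}\circ\ell_0$, where $\ell_0\from\Pi\to\liealg_\Gamma^{ab}$ is the analogous map built from $\nu_\Gamma\circ q$. The map $\ell_0\otimes\R$ is an isomorphism: it factors as the isomorphism $\Pi^{ab}\otimes\R\cong\Gamma^{ab}\otimes\R$ (the lower central series gives $(\Pi/\Pi_k)^{ab}=\Pi^{ab}$, and passing to the torsion-free quotient and tensoring with $\R$ discards only torsion) followed by the Malcev isomorphism $\Gamma^{ab}\otimes\R\cong\liealg_\Gamma^{ab}$; equivalently, it is the map computing $H^1$ of the canonical $h_q$, already noted to be an isomorphism. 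Since $\ell_\omega\otimes\R$ and $\ell_0\otimes\R$ are both isomorphisms, so is $(dA_\omega)^{ab}\otimes\R$, whence $A_\omega$ is an automorphism. This establishes existence, and combined with the uniqueness above it completes the proof.
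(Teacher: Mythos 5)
Your proposal is correct, and it matches the paper's proof in its first half: you factor $\sigma_\omega$ through $q$ using the nilpotency class and torsion-freeness of $G(\Gamma)$, extend by \Cref{extension}, and get uniqueness from the uniqueness clauses there — this is exactly what the paper does. Where you genuinely diverge is in proving that $A_\omega$ is onto. The paper argues by contradiction: it forms $H[G(\Gamma),G(\Gamma)]$ with $H$ the image of $A_\omega$, invokes Yamabe's theorem to see this is a closed, contractible, normal Lie subgroup, so the quotient $Q$ is a vector group; a nontrivial character of $Q$ pulls back to a nonzero class in $H^1(\bigwedge\liealg_\Gamma^*)$ whose image under $\tr{\omega}$ has all periods zero, because the developed loop $\pr_Q\circ P_{e,\pi^*\omega}\circ\widetilde{\gamma}$ closes up in the contractible $Q$ — contradicting injectivity of $H^1(\tr{\omega})$. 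You instead compute $H^1(\tr{\omega})$ explicitly as the transpose of $\ell_\omega\otimes\R$ and reduce surjectivity of $A_\omega$ to surjectivity of $(dA_\omega)^{ab}$ by a Nakayama-type argument on $\liealg_\Gamma$ (the paper uses the group-theoretic counterpart of that reduction, namely that $N[G,G]=G$ forces $N=G$ in a nilpotent group). Both arguments ultimately rest on the same period computation $\int_\gamma\tr{\omega}(\alpha)=\alpha(\log\sigma_\omega([\gamma]))$: the paper evaluates it only for the single form pulled back from $Q$ and only needs the qualitative conclusion that the period vanishes, while you evaluate it for all closed $\alpha$ and extract a formula for $H^1(\tr{\omega})$. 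What your route buys is the elimination of Yamabe's theorem and the contractibility argument for $Q$, plus a sharper statement (an explicit identification of $H^1(\tr{\omega})$ that also shows only injectivity of $H^1(\tr{\omega})$, equivalently surjectivity of $\ell_\omega\otimes\R$, is ever used — a point the paper itself remarks on in a hidden comment); what it costs is the standard but nontrivial Malcev/Nomizu input that $\log$ induces an isomorphism $\Gamma^{ab}\otimes\R\cong\liealg_\Gamma^{ab}$, which you correctly identify as the content of $\ell_0\otimes\R$ being an isomorphism. Both of the steps you leave at the level of standard facts are indeed standard, so I see no gap.
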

\begin{proof}
The Lie group $G(\Gamma)$
is a nilpotent group with nilpotency class at most $k$.
Hence every map from $\Pi$ to $G(\Gamma)$ factors uniquely via the projection
from $\Pi$ onto $\Pi/\Pi_k$. Since $G(\Gamma)$ is torsion-free, every map from
$\Pi/\Pi_k$ to $G(\Gamma)$ admits a unique factorization via the projection
from $\Pi/\Pi_k$ onto $\Gamma = (\Pi/\Pi_k)_{tf}$.
Hence there is a unique homomorphism of groups $f\from \Gamma \to G(\Gamma) $ such that
$\sigma_\omega = f\circ q$.

By \Cref{extension}, there is a unique smooth
homomorphism of Lie groups $A_\omega \from G(\Gamma) \to G(\Gamma)$ that extends
$f$, i.e.\ $f = A_\omega|_{\Gamma} = A\circ  \nu_\Gamma$. Thus
$\sigma_\omega = A_\omega \circ \nu_\Gamma \circ q$.
The uniqueness of $A_\omega$ with such property follows from the uniqueness
part of~\Cref{extension} and the uniqueness of $f$ such that~$\sigma_\omega =
f\circ q$.

It is left to show that $A_\omega$ is an automorphism.
As $G(\Gamma)$ is a simply connected nilpotent Lie group, it is enough to check that
$A_\omega$ is onto.

Write $H$ for the image of $A_\omega$ in $G(\Gamma)$.
It is shown in
\cite[Theorem~7.18]{clement}, that for a nilpotent group $G$ and a subgroup  $N<G$, one has
$N[G,G]=G$ if and only if $N=G$.
Thus $A_\omega$ is onto if and only if  $H[G(\Gamma), G(\Gamma)] = G(\Gamma)$.

The group $H[G(\Gamma), G(\Gamma)]$ is a path-connected subgroup of
$G(\Gamma)$.
 By the result of Yamabe~\cite{yamabe} (see
also~\cite{goto}), every path-connected subgroup  of $G(\Gamma)$ is
a Lie subgroup. Thus it corresponds to a Lie subalgebra $\klie$ of
$\liealg_\Gamma$.
As $G(\Gamma)$ is a simply connected nilpotent Lie group, the exponential map
$\exp \from \liealg_\Gamma \to G(\Gamma)$
is a diffeomorphism. From the Baker-Campbell-Hausdorff formula and nilpotency of
$G(\Gamma)$, it follows that
$\exp(\klie)$ is a Lie subgroup of $G(\Gamma)$.
Hence $\exp(\klie)$ coincides with $H[G(\Gamma), G(\Gamma)]$.
As $\klie$ is a closed subset of $\liealg_\Gamma$ and $\exp\from \liealg_\Gamma
\to G(\Gamma)$ is a
diffeomorphism, we get that $H[G(\Gamma),G(\Gamma))]$ is a closed contractible subgroup of
$G(\Gamma)$.

As $H[G(\Gamma), G(\Gamma)]$ contains $[G(\Gamma), G(\Gamma)]$, it is a normal
subgroup of~$G(\Gamma)$.
Hence, the quotient  $Q := G(\Gamma)/ H[G(\Gamma), G(\Gamma))]$ is an  abelian
Lie group.
From the long exact sequence for the fibration $G\left( \Gamma \right)
\epi Q$,
we conclude that all the homotopy groups of $Q$ are trivial and thus $Q$ is
contractible.
Hence $Q \cong (\R^k, +)$ as a Lie group for some $k\ge 0$.
Clearly, $k=0$ if and only if $H[G(\Gamma),G(\Gamma)] = G(\Gamma)$, if and only
if $A_\omega$ is onto.

Suppose $k\not=0$. Then there is a non-trivial homomorphism $\phi\colon Q \to \R$ of Lie groups.
Denote by $\pr_Q$ the canonical projection of $G(\Gamma)$ onto $Q$.
Write $\beta $ for $(\phi \circ \pr_Q)^*(dt)$.
Since $dt$ is a non-zero left-invariant form on $(\R,+)$ and $\phi\circ \pr_Q$ is a
surjective homomorphism of Lie algebras, the form $\beta$ is a non-zero
left-invariant form on $G(\Gamma)$.
Hence $\beta \in \liealg_\Gamma^*$. As $\beta$ is closed, we get that $\beta
\in \liealg_\Gamma^*\cap \ker d_{CE} = H^1(\bigwedge \liealg_\Gamma^*)$.

As $H^1(\tr{\omega})$ is injective,
the class
$[\tr{\omega}(\beta)]$ is a non-zero element in $H^1(M)$.
Hence
there is a loop $\gamma$ at $x_0$ such that $\int_{\gamma}
\tr{\omega} (\beta) \not=0$.
Let $\widetilde{\gamma} \from \R \to \widetilde{M}$ be the unique lifting of $\gamma$
such that $\widetilde{\gamma}(0)=\tilde{x}_0$.
 Using \eqref{picircomega}, we get
\begin{equation*}
\begin{aligned}
0 & \not= \int_{\gamma} \tr{\omega}(\beta) = \int_0^1 \gamma^*
(\tr{\omega} (\beta))
= \int_0^1 \widetilde{\gamma}^*\circ \pi^* \circ \tr{\omega} (\beta)\\&
= \int_0^1 \widetilde{\gamma}^*\circ  P_{e,\pi^*\omega}^* \circ
 \pr_Q^* \circ \phi^*(dt) = \int_c
\phi^*(dt),
\end{aligned}
\end{equation*}
where $c$ is the curve $\pr_Q \circ P_{e,\pi^*\omega} \circ
\widetilde{\gamma}$ in $Q$.
Consider the path $P_{e,\pi^*\omega}\circ \widetilde{\gamma}$ in $G(\Gamma)$. We
have $\widetilde{\gamma}(1) = [\gamma]{x}_0$ by definition of the
action of $\Pi$ on $\widetilde{M}$.
By~\Cref{rhoomega}, we get
\begin{equation*}
\begin{aligned}
P_{e,\pi^*\omega}(\widetilde{\gamma}(1)) =
P_{e,\pi^*\omega}([\gamma]x_0) = \sigma_\omega([\gamma]) =A_\omega \circ
\nu_\Gamma \circ q ([\gamma]).
\end{aligned}
\end{equation*}
Thus $P_{e,\pi^*\omega} (\widetilde{\gamma}(1))$ lies in the image of
$A_\omega$. Thus $c(1)$ is the neutral element of $Q$. Obviously $c(0)$ is also
the neutral element in $Q$. Therefore $c$ is a loop in
a contractible space.  We get $\int_{c} \phi^* dt =0$.
This gives a contradiction to the assumption $k\not=0$. Therefore $k=0$ and
$A_\omega$ is an automorphism.
\forus{We used only that $\tr{\omega}\from \extalg
\liealgdual \to \derham\bullet(M)$ induces an injective map on
$H^1$. But $\dim H^1(\extalg \liealgdual) = \dim (\liealg/[\liealg,\liealg]) =
\dim G/[G,G] = \rank \Gamma/[\Gamma,\Gamma] = \dim H_1(M) = \dim H^1(M)$.}
\end{proof}
For a flat $1$-form $\omega\in \Omega^1(M,\liealg_\Gamma)$ such that
$H^1(\omega)$ is an isomorphism, denote by $\widetilde{h}_\omega$ the smooth map
$A_\omega^{-1} \circ P_{e,\pi^*\omega}$ from $\widetilde{M}$  to $G(\Gamma)$.
For every $g\in \Pi$ and $y \in \widetilde{M}$, we get $\widetilde{h}_\omega(gx) =
q(g) \widetilde{h}_\omega(x)$.
Moreover, $\widetilde{h}_\omega(\widetilde{x}_0) =e$.
Therefore, $\widetilde{h}_\omega$ induces a smooth map $h_\omega \from M \to
N_\Gamma$ of pointed manifolds such that $\pi_1(h_\omega) = q$.
Write $\pr_\Gamma$ for the canonical projection $G(\Gamma) \to N_\Gamma$.
We get
\begin{equation}
\label{pistarrho}
\begin{aligned}
\pi^* \circ \tr{\omega} = P_{e,\pi^*\omega}^*|_{\bigwedge \liealg_\Gamma^*} =
\tilde{h}\strut^*_\omega \circ
A_\omega^*|_{\bigwedge \liealg_\Gamma^*} = \tilde{h}\strut^*_\omega|_{\bigwedge
\liealg_\Gamma^*} \circ a_\omega,
\end{aligned}
\end{equation}
where $a_\omega$ is the restriction of $A^*_\omega\from \Omega(G(\Gamma)) \to
\Omega(G(\Gamma))$ to $\bigwedge \liealg_\Gamma^*$.
The map  $\pr_\Gamma^* \circ \psi_\Gamma$
coincides with the canonical inclusion of $\bigwedge \liealg_\Gamma^*$ into $\Omega^\bullet(G(\Gamma))$.
As $h_\omega \circ \pi = \pr_\Gamma \circ \widetilde{h}_\omega$, we get
\begin{equation*}
\begin{aligned}
\tilde{h}\strut^*_\omega|_{\bigwedge \liealg_\Gamma^*} = \tilde{h}\strut^*_\omega \circ \pr_\Gamma^* \circ
\psi_\Gamma = \pi^* \circ h_\omega^* \circ \psi_\Gamma.
\end{aligned}
\end{equation*}
Combining this equation with~\eqref{pistarrho}, we obtain
\begin{equation*}
\begin{aligned}
\pi^* \circ \tr{\omega} = \pi^* \circ h_\omega^* \circ \psi_\Gamma \circ
a_\omega.
\end{aligned}
\end{equation*}
Since $\pi^*\from \Omega(M) \to \Omega(\widetilde{M})$ is injective, we get
$\tr{\omega} = h_\omega^* \circ \psi_\Gamma \circ a_\omega$.
Hence $\tr{\omega} \circ a_\omega^{-1}= h_\omega^* \circ \psi_\Gamma$.
\forus{The map on the right side corresponds to the $1$-form $h^*\mu_\Gamma$.
\begin{equation*}
\begin{aligned}
\Hom (\liealg^*, \Omega^1(M)) & \to \Omega^1(M, \liealg^{**}) \\
f & \mapsto ( X \mapsto ( \alpha \mapsto f(\alpha)(X))) \\
\tr{\omega} \circ a_\omega^{-1} & \mapsto ( X \mapsto (\alpha \mapsto
\tr{\omega} ( a_\omega^{-1} (\alpha) ) (X) = a_\omega^{-1} (\alpha)
( \omega(X)) )
\end{aligned}
\end{equation*}
\begin{equation*}
\begin{aligned}
\tr{\omega}(\beta) (X) = \beta (\omega(X))
\end{aligned}
\end{equation*}
We have to identify the element $y$ of $\liealg$ that corresponds to the element
$\alpha \mapsto a_\omega^{-1}(\alpha) (\omega(X))$ of $\liealg^{**}$, i.e.\
for all $\alpha\in \liealg^*$, we should have $a_\omega^{-1}(\alpha)
(\omega(X)) = \alpha(y)$.
Thence $y = (a_\omega^{-1})^t \omega(X)$.
Therefore $h_\omega^* \mu_\Gamma = (a_\omega^{-1})^t \circ \omega$.}
Therefore, we proved the existence part of the following theorem.
\begin{theorem}\label{main1}
Let $(M,x_0)$ be a pointed compact manifold and $k$ a natural number.
Write $\Pi$ for the fundamental group of $M$, $\Gamma$ for
$(\Pi/\Pi_k)_{tf}$, and $q$ for the canonical projection from $\Pi$ to
$\Gamma$.
For every flat $1$-form $\omega \in \Omega^1(M,\liealg_\Gamma)$  such that
$H^1(\tr{\omega})$ is an isomorphism, there is a
unique smooth map $h_\omega \from M \to N_\Gamma$ of pointed manifolds  and a unique automorphism
$a_\omega$ of $\bigwedge\liealg_\Gamma^*$ such that $\pi_1(h_\omega)=q$ and
$\tr{\omega} = h_\omega^* \circ \psi_\Gamma \circ a_\omega$.
\end{theorem}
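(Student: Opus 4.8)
Since the paragraph preceding the statement already exhibits the pair $(h_\omega,a_\omega)$ and verifies $\tr{\omega}=h_\omega^*\circ\psi_\Gamma\circ a_\omega$, the plan is to prove only \emph{uniqueness}: any pointed smooth map $h\from M\to N_\Gamma$ with $\pi_1(h)=q$ and any automorphism $a$ of $\extalg\liealg_\Gamma^*$ satisfying $\tr{\omega}=h^*\circ\psi_\Gamma\circ a$ must coincide with the constructed pair. The strategy is to first read off the degree-one part of this identity to reduce the whole problem to determining $a$, and then to pin $a$ down by a monodromy/rigidity argument resting on \Cref{postnikovfirstfloor} and \Cref{extension}.

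First I would evaluate $\tr{\omega}=h^*\circ\psi_\Gamma\circ a$ on degree-one generators. Writing $\tr{\omega}(\beta)=\beta\circ\omega$ and $h^*\circ\psi_\Gamma(\gamma)=\gamma\circ(h^*\mu_\Gamma)$ for $\beta,\gamma\in\liealg_\Gamma^*$, and using that $\liealg_\Gamma^*$ separates points of $\liealg_\Gamma$, the identity becomes the equality of $\liealg_\Gamma$-valued $1$-forms $\omega=a^{t}\circ(h^*\mu_\Gamma)$, i.e.\ $h^*\mu_\Gamma=(a^{-1})^{t}\circ\omega=:\omega'$ (this is exactly the relation recorded in the discussion above). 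Since $a$ is an automorphism of the CDGA $\extalg\liealg_\Gamma^*$, its transpose $a^{t}$ is a Lie-algebra automorphism of $\liealg_\Gamma$, so $\omega'$ is again flat, and from $\tr{\omega'}=\tr{\omega}\circ a^{-1}$ the map $H^1(\tr{\omega'})$ is still an isomorphism. Pulling back to $\widetilde{M}$, the pointed lift $\widetilde{h}$ of $h\circ\pi$ has Darboux derivative $\widetilde{h}^*\mu_{G(\Gamma)}=\pi^*\omega'$, so by \Cref{sharpe} $\widetilde{h}=P_{e,\pi^*\omega'}$; hence $\widetilde{h}$, and with it $h$, is completely determined by $a$. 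It therefore suffices to show $a=a_\omega$.

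To pin $a$ down I would use the descent data. As $\widetilde{h}=P_{e,\pi^*\omega'}$ descends to the pointed map $h$ with $\pi_1(h)=q$, the equivariance $\widetilde{h}(gx)=\nu_\Gamma(q(g))\widetilde{h}(x)$ together with \Cref{rhoomega} gives $\sigma_{\omega'}=\nu_\Gamma\circ q$. On the other hand \Cref{postnikovfirstfloor}, applied to $\omega'$, produces the unique automorphism $A_{\omega'}$ with $\sigma_{\omega'}=A_{\omega'}\circ\nu_\Gamma\circ q$; comparing and using surjectivity of $q$ gives $A_{\omega'}\circ\nu_\Gamma=\nu_\Gamma$, and by the uniqueness in \Cref{extension} the only smooth homomorphism $G(\Gamma)\to G(\Gamma)$ restricting to $\nu_\Gamma$ on $\Gamma$ is $\id$, so $A_{\omega'}=\id$. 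Finally I would relate $A_{\omega'}$ to $a$: let $B$ be the unique automorphism of $G(\Gamma)$ integrating the Lie-algebra automorphism $(a^{-1})^{t}$. The transformation rule $B^*\mu_{G(\Gamma)}=(a^{-1})^{t}\circ\mu_{G(\Gamma)}$ shows $P_{e,\pi^*\omega'}=B\circ P_{e,\pi^*\omega}$, whence $\sigma_{\omega'}=B\circ\sigma_\omega=B\circ A_\omega\circ\nu_\Gamma\circ q$; by the uniqueness in \Cref{postnikovfirstfloor} this forces $A_{\omega'}=B\circ A_\omega$, so $B=A_\omega^{-1}$. Differentiating at the identity, and recalling $a_\omega^{t}=(dA_\omega)_e$ because $a_\omega=A_\omega^*|_{\extalg\liealg_\Gamma^*}$, we obtain $(a^{-1})^{t}=(a_\omega^{-1})^{t}$, hence $a=a_\omega$; then $\widetilde{h}=B\circ P_{e,\pi^*\omega}=A_\omega^{-1}\circ P_{e,\pi^*\omega}$ is the constructed lift, so $h=h_\omega$.

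The step I expect to be the genuine obstacle is the uniqueness of the algebraic datum $a$. Cohomological information alone is insufficient: since $\pi_1(h)=q$ fixes the homotopy class of $h$, the identity $\tr{\omega}=h^*\circ\psi_\Gamma\circ a$ determines $a$ only on $H^1(\extalg\liealg_\Gamma^*)$, that is, only the induced map on $\liealg_\Gamma/[\liealg_\Gamma,\liealg_\Gamma]$; for a non-abelian $\liealg_\Gamma$ two Lie automorphisms can agree there while differing by a shift into $[\liealg_\Gamma,\liealg_\Gamma]$. Overcoming this is exactly where the geometric rigidity of the lattice enters: it is the identity $\sigma_{\omega'}=\nu_\Gamma\circ q$, forced by $\pi_1(h)=q$, together with the uniqueness clauses of \Cref{postnikovfirstfloor} and \Cref{extension}, that eliminates the automorphisms invisible to $H^1$ and collapses $a$ to $a_\omega$.
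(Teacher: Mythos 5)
Your proposal is correct and follows essentially the same route as the paper's own uniqueness argument: both reduce the identity $\tr{\omega}=h^*\circ\psi_\Gamma\circ a$ to an equality of Darboux derivatives on $\widetilde{M}$, invoke the uniqueness clause of \Cref{sharpe} to identify the lift of $h$ with a development map, and then use the equivariance $\widetilde{h}(gx)=q(g)\widetilde{h}(x)$ together with the uniqueness parts of \Cref{postnikovfirstfloor} and \Cref{extension} to force the integrating automorphism to be $A_\omega$ and hence $a=a_\omega$. The only cosmetic difference is that the paper works with $B$ integrating $a^t$ and the equation $B\circ\widetilde{h}=P_{e,\pi^*\omega}$, whereas you work with its inverse and the auxiliary form $\omega'=(a^{-1})^t\circ\omega$; these are trivially equivalent.
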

\begin{proof}[Proof of the uniqueness]
Suppose $h\from M \to N_\Gamma$
and $b \in \Aut(\bigwedge \liealg_\Gamma^*)$ are such that $h(x_0) = \Gamma e$,
$\tr{\omega} = h^* \circ \psi_\Gamma \circ b$, and $\pi_1(h) = q$.
There is a unique lifting $\widetilde{h}\from
\uc{M} \to G(\Gamma)$
of $h$ such that $\widetilde{h}(\widetilde{x}_0) = e$.
Denote by $B$ the automorphism of $G(\Gamma)$ that integrates
$b^t|_{\liealg_\Gamma} \from \liealg_\Gamma \to \liealg_\Gamma$.
We claim that $B\circ \widetilde{h}$ and $P_{e,\pi^*\omega}$
are equal. As both maps send $\widetilde{x}_0$ to $e$, by the uniqueness part
of~\Cref{sharpe}, it is enough to show that they have the same Darboux
derivative.  The $1$-form $(B\circ \widetilde{h})^*\mu_{G(\Gamma)}$ corresponds
to the homomorphism $\widetilde{h}^*\circ B^*|_{\bigwedge \liealg_\Gamma^*}
\from \bigwedge \liealg_\Gamma^* \to \Omega^*(\uc{M})$ of CDGAs. We have
\begin{equation*}
\begin{aligned}
 \widetilde{h}^*\circ B^*|_{\bigwedge
\liealg_{\Gamma}^*} = \widetilde{h}^* \circ \pr_\Gamma^* \circ \psi_\Gamma \circ b =
\pi^*\circ h^* \circ \psi_\Gamma \circ b = \pi^* \circ \tr{\omega} =
\tr{\pi^* \omega}.
\end{aligned}
\end{equation*}
Hence $(B\circ \widetilde{h})^* \mu_{G(\Gamma)} = \pi^*\omega =
P^*_{e,\pi^*\omega} \mu_{G(\Gamma)}$ and $B\circ \widetilde{h} =
P_{e,\pi^*\omega}$.
In particular, $\sigma_\omega(g)  = B (\widetilde{h} (g\widetilde{x}_0))$. Since
$\pi_1(h) = q$, we have $\widetilde{h} (gx) = q(g) \widetilde{h}(x)$. Thus
$\sigma_\omega(g) =B(q(g))$. The uniqueness part of~\Cref{postnikovfirstfloor}
implies that $B=A_\omega$ and, hence, $b=a_\omega$. Now, $B\circ \widetilde{h} = P_{e,\pi^*\omega} =
A_\omega \circ \widetilde{h}_\omega$ implies that $\widetilde{h} =
\widetilde{h}_\omega$. Hence $h = h_\omega$.
\end{proof}

\begin{remark}
In~\cite{chen}, Chen constructed a smooth
homotopy equivalence $h\from M \to N_\Gamma$ such that $\pi_1(h) =q$ starting with a splitting of
$\Omega^\bullet(M)$. In particular, his construction can be applied to any
Riemannian manifold. Our understanding is that the corresponding
$\liealg_\Gamma$-valued $1$-form can be obtained from the Chen connection in
$\Omega^\bullet (M) \otimes \widehat{\mathbb{T}}(H_\bullet(M))$.
We do not know if every smooth homotopy equivalence $h\from M\to N_\Gamma$ such that
$\pi_1(h) =q$ corresponds to a suitable splitting of $\Omega^\bullet(M)$.
\end{remark}

\begin{remark}
\label{hsurjective}
Suppose $M$ aspherical and
$\Pi = \Gamma$, i.e.\ that $\Pi$ is a torsion-free nilpotent group of
nilpotency class at most $k$.
If a flat $\liealg_\Gamma$-valued $1$-form $\omega$ on $M$ is such that
$\tr{\omega}$ is a quasi-isomorphism, then $h_\omega$ is a homotopy
equivalence. In particular, the degree of $h_\omega$ is either $1$ or $-1$.
As a non-zero degree map between compact manifolds, $h_\omega$ is surjective (cf. \cite[Prop. I, Sec. 6.1]{greub1}).
\end{remark}
\begin{corollary}
\label{rhoinjective}
Let $M^n$ be a compact aspherical manifold with nilpotent fundamental group
$\Gamma$ and $\liealg$ the Lie algebra of $G(\Gamma)$. If $\rho\colon \bigwedge \liealg^* \to \Omega^\bullet(M)$
is a quasi-isomorphism of CDGAs then $\rho$ is an injective map.
\end{corollary}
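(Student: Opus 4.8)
The plan is to realise $\rho$ as the form $\tr{\omega}$ attached to a flat form and then to reduce, via \Cref{main1}, to an injectivity statement about the pullback of invariant forms along a map of degree $\pm 1$. Since $\rho\from \bigwedge\liealg^* \to \Omega^\bullet(M)$ is a homomorphism of graded algebras, it is determined by its restriction to the degree-one generators $\liealg^*$ and hence equals $\tr{\omega}$ for a unique $\omega\in\Omega^1(M,\liealg)$; the fact that $\rho$ commutes with $d$ means precisely that $\omega$ is flat. As $M$ is aspherical we have $\Pi = \pi_1(M)=\Gamma$, and since $\Gamma$ is nilpotent of some class $c$ I would fix $k>c$, so that $(\Pi/\Pi_k)_{tf}=\Gamma$ and $\liealg_\Gamma=\liealg$. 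Because $\rho$ is a quasi-isomorphism, so is $\tr{\omega}$; in particular $H^1(\tr{\omega})$ is an isomorphism, so \Cref{main1} applies and gives a factorisation
\begin{equation*}
\begin{aligned}
\rho = \tr{\omega} = h_\omega^* \circ \psi_\Gamma \circ a_\omega,
\end{aligned}
\end{equation*}
where $a_\omega$ is an automorphism of $\bigwedge\liealg^*$ and $h_\omega\from M\to N_\Gamma$ is smooth. By \Cref{hsurjective}, $h_\omega$ is a homotopy equivalence; in particular $\dim N_\Gamma=\dim\liealg=n$ and $h_\omega$ has degree $\pm 1$.

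Since $a_\omega$ is bijective, $\rho$ is injective if and only if $\tau:=h_\omega^*\circ\psi_\Gamma$ is injective. Both $h_\omega^*$ and $\psi_\Gamma$ are homomorphisms of graded algebras, hence so is $\tau$; in particular $\tau$ preserves degree, so its kernel is a graded subspace and it suffices to show that $\tau$ is injective on each $\bigwedge^p\liealg^*$. The crucial case is top degree $p=n$. Fix a generator $\Omega$ of the one-dimensional space $\bigwedge^n\liealg^*$. Then $\psi_\Gamma(\Omega)$ is a nowhere-vanishing left-invariant top form on $N_\Gamma$, i.e.\ a volume form, so $\int_{N_\Gamma}\psi_\Gamma(\Omega)\neq 0$. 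Exploiting that $h_\omega$ has degree $\pm 1$ gives
\begin{equation*}
\begin{aligned}
\int_M \tau(\Omega) = \int_M h_\omega^*(\psi_\Gamma(\Omega)) = \deg(h_\omega)\int_{N_\Gamma}\psi_\Gamma(\Omega) \neq 0,
\end{aligned}
\end{equation*}
and hence $\tau(\Omega)\neq 0$ as a form on $M$.

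To reach all degrees I would propagate this using the nondegeneracy of the wedge pairing on the exterior algebra of a vector space. Let $0\neq y\in\bigwedge^p\liealg^*$. Since the pairing $\bigwedge^p\liealg^* \times \bigwedge^{n-p}\liealg^* \to \bigwedge^n\liealg^*$ is perfect, there is $z\in\bigwedge^{n-p}\liealg^*$ with $y\wedge z=\lambda\,\Omega$ for some $\lambda\neq 0$. As $\tau$ is multiplicative,
\begin{equation*}
\begin{aligned}
\tau(y)\wedge\tau(z) = \tau(y\wedge z) = \lambda\,\tau(\Omega)\neq 0,
\end{aligned}
\end{equation*}
which forces $\tau(y)\neq 0$. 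Thus $\tau$ is injective on each $\bigwedge^p\liealg^*$, hence injective, and therefore $\rho$ is injective.

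The obstacle here is conceptual rather than computational: a quasi-isomorphism is only guaranteed to be injective on cohomology, so nothing a priori prevents $\rho$ from killing exact or non-closed forms, and a degreewise rank count is not available. The device that overcomes this is the interplay of multiplicativity of $\rho$ with the perfect wedge pairing, which reduces chain-level injectivity in \emph{every} degree to the single nonvanishing statement $\tau(\Omega)\neq 0$ in top degree, where the degree $\pm 1$ of $h_\omega$ can be used. The only facts I would need to pin down carefully are that $\Gamma$ is torsion-free (so that $G(\Gamma)$, and hence the whole factorisation, is available) and that $\dim\liealg=n$ with $h_\omega$ of well-defined degree $\pm 1$, both of which are furnished by \Cref{hsurjective}.
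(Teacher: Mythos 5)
Your proof is correct, and its skeleton is the one the paper uses: realise $\rho$ as $\tr{\omega}$ for a flat $\liealg$-valued $1$-form, invoke \Cref{main1} to factor $\rho = h_\omega^*\circ\psi_\Gamma\circ a_\omega$, and exploit that $h_\omega$ is a surjective map of degree $\pm 1$ (\Cref{hsurjective}). Where you diverge is in the final injectivity step. The paper simply observes that $\psi_\Gamma$ is injective and that $h_\omega^*\colon \Omega^\bullet(N_\Gamma)\to\Omega^\bullet(M)$ is injective because $h_\omega$ is surjective (a surjective smooth map between manifolds of equal dimension has, by Sard's theorem, a dense set of regular values, and at a preimage of such a value the pullback of a nonvanishing form is nonvanishing), so $\rho$ is a composite of three injections. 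You instead prove injectivity of $h_\omega^*\circ\psi_\Gamma$ degreewise, reducing everything to the top degree via multiplicativity and the perfect wedge pairing on $\bigwedge\liealg^*$, and then settle the top degree with $\int_M h_\omega^*\psi_\Gamma(\Omega)=\deg(h_\omega)\int_{N_\Gamma}\psi_\Gamma(\Omega)\neq 0$. Your route is somewhat longer but equally valid; it has the mild advantage of only needing injectivity of $h_\omega^*$ on the finite-dimensional image of $\psi_\Gamma$ and of replacing the pointwise Sard-type argument with a single integral identity, at the cost of invoking the degree formula and the nondegeneracy of the pairing. The side conditions you flag (torsion-freeness of $\Gamma$, $\dim\liealg = n$, well-definedness of the degree) are indeed needed and are supplied exactly as you say.
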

\begin{proof}
Let $\omega\in \Omega^1(M,\liealg)$ be such that $\rho = \tr{\omega}$.
By \Cref{main1}, $\rho = h_\omega^* \circ \psi_\Gamma \circ a_\omega$, where
$a_\omega$ is an automorphism of $\bigwedge\liealg_\Gamma$.
 It is  clear that $\psi_\Gamma \colon \bigwedge \liealg^* \to
\Omega^*(N_\Gamma)$ is injective. The map $h_\omega$ is surjective by the previous
remark. Thus $h_\omega^*\colon \Omega^\bullet(N_\Gamma) \to \Omega^\bullet(M)$ is injective.
Hence $\rho$ is a composition of three injective maps.
\end{proof}

\section{ On de Rham algebra of  Sasakian manifolds}
\label{sasakianmanifolds}
The main objective of this section is to show that for every compact aspherical
Sasakian manifold with nilpotent fundamental group $\Gamma$, there is a
quasi-isomorphism $\bigwedge T^*_e G(\Gamma) \to \Omega^\bullet(M)$ with
sufficiently rigid  properties that will  imply
that a corresponding smooth homotopy equivalence $h\colon M \to N_\Gamma$ is a
diffeomorphism.

We will use the following notation.
Given  linear operators $A_1$, \dots, $A_k$ on $\Omega^\bullet(M)$, we will
write $\Omega^\bullet_{A_1,\dots, A_k}(M)$ for the intersection of the kernels
of the operators $A_1$, \dots, $A_k$.
If each of $A_1$, \dots, $A_k$ is a homogeneous graded derivation, then
$\Omega^\bullet_{A_1,\dots,A_k}(M)$ is a subalgebra of~$\Omega^\bullet(M)$.
If $R$ is a graded subalgebra of $\derham\bullet(M)$ and $\alpha$ is an element of
$\derham\bullet(M)$, then $R[\alpha]$ denotes the subalgebra of
$\derham\bullet(M)$ generated by $R$ and $\alpha$. If $\alpha$ has an odd
degree, then, of course, $R[\alpha] = R + R\wedge \alpha$.

Now we discuss Tievsky models for the complexified de Rham algebra of a Sasakian manifold constructed in his PhD
thesis~\cite{tievsky}.
Let $(M^{2n+1},\varphi,\xi,\eta)$ be a Sasakian manifold.
Tievsky showed that the embedding
\begin{equation*}
\begin{aligned}
\Omega^\bullet_{i_\xi,\lie_\xi,d_B^c}(M)_\C [\eta] \hookrightarrow \Omega^\bullet(M)_\C
\end{aligned}
\end{equation*}
is a quasi-isomorphism. Since the base field change functor is exact, we get
that also
\begin{equation}
\label{tievskyquasiiso}
\begin{aligned}
\Omega^\bullet_{i_\xi,\lie_\xi,d_B^c}(M) [\eta] \hookrightarrow
\Omega^\bullet(M)
\end{aligned}
\end{equation}
is a quasi-isomorphism.
We write $\tievsky^\bullet(M)$ for
the CDGA
$\Omega^\bullet_{i_\xi,\lie_\xi,d_B^c}(M) [\eta]$  and refer to it as the
\emph{first Tievsky model}.

By~\cite[Lemma~1]{wolak}, the $d_B d_B^c$-lemma holds on
$\Omega^\bullet_{i_\xi,\lie_\xi}(M)$, i.e.\
\begin{equation*}
\begin{aligned}
\ker d_B^c \cap \ker d_B \cap \im d_B = \im ( d_B d_B^c).
\end{aligned}
\end{equation*}
This implies that
\begin{equation}
\label{kerdbc}
\begin{aligned}
\ker d_B^c = \ker d_B^c \cap \ker d_B + \im d_B^c.
\end{aligned}
\end{equation}
The inclusion ``$\supset$'' is obvious. Applying $d_Bd_B^c$-lemma to $d_B \beta$
with $\beta \in \ker d_B^c$, we get that there is $\alpha$ such that $d_B \beta
= d_B d_B^c \alpha$. Thus
\begin{equation*}
\begin{aligned}
\beta = ( \beta - d_B^c \alpha ) + d_B^c \alpha \in \ker d_B^c \cap d_B + \im
d_B^c.
\end{aligned}
\end{equation*}
This permits to identify the first two components of $\tievsky^\bullet(M)$.
\begin{proposition}
\label{Omega1liexiliephi}
Let $M^{2n+1}$ be a compact Sasakian manifold. Then
\begin{align}
\label{Omega0liexilievarphi}
\tievsky^0(M) & = \Omega^0_{i_\xi,\lie_\xi, d_B^c}(M) =  \R\\
\label{omegafirstixiliexidbc}
\tievsky^1(M) & = \Omega^1_{\lap}(M) \oplus d_B^c
\Omega^0_{i_\xi,\lie_\xi}(M) \oplus \R\eta.
\end{align}
 Moreover, $H^1(\tievsky^\bullet(M))$ coincides with $\Omega^1_\lap(M)$.
\end{proposition}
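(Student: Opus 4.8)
The plan is to treat degrees $0$ and $1$ separately and then read off $H^1$, reducing everything through~\eqref{kerdbc} to the Hodge theory of basic forms. In degree $0$ adjoining $\eta$ contributes nothing, so $\tievsky^0(M)=\Omega^0_{i_\xi,\lie_\xi,d_B^c}(M)$. On functions $i_\xi$ is identically zero, hence $\Omega^0_{i_\xi,\lie_\xi}(M)$ is the space of basic functions ($\xi f=0$); for these $d_B^c f=-\lie_\varphi f=-i_\varphi df$, so $d_B^c f=0$ reads $df(\varphi X)=0$ for all $X$. Since $\xi$ together with the image of $\varphi$ spans $TM$ (as $\varphi^2=-\id$ on $\ker\eta$) and $df(\xi)=0$, this forces $df=0$, so $f$ is constant on the connected manifold $M$; this is~\eqref{Omega0liexilievarphi}.

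In degree $1$ one has $\tievsky^1(M)=\Omega^1_{i_\xi,\lie_\xi,d_B^c}(M)\oplus\R\eta$, the sum being direct because $i_\xi\eta=1$ shows $\eta$ is not basic. Now $\Omega^1_{i_\xi,\lie_\xi,d_B^c}(M)$ is exactly $\ker d_B^c$ on basic $1$-forms, so~\eqref{kerdbc} yields $\Omega^1_{i_\xi,\lie_\xi,d_B^c}(M)=\bigl(\ker d_B\cap\ker d_B^c\bigr)_{\mathrm{basic}}+d_B^c\Omega^0_{i_\xi,\lie_\xi}(M)$, the last summand being $\im d_B^c$ in degree $1$. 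Thus~\eqref{omegafirstixiliexidbc} reduces to two assertions: that $\ker d_B\cap\ker d_B^c$ on basic $1$-forms equals the harmonic forms $\Omega^1_\lap(M)$, and that the sum is direct.

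The crux, and the step I expect to be hardest, is the identification $\bigl(\ker d_B\cap\ker d_B^c\bigr)_{\mathrm{basic}}=\Omega^1_\lap(M)$, which rests on the transverse Kähler Hodge theory underlying the $d_Bd_B^c$-lemma of~\cite{wolak} and Tievsky's model~\cite{tievsky}. First I would recall the classical fact that a harmonic $1$-form $\alpha$ on a compact Sasakian manifold is basic: as $\xi$ is Killing, $\lie_\xi$ commutes with $\lap$, so $\lie_\xi\alpha=d(i_\xi\alpha)$ is harmonic and exact, hence $0$, whence $i_\xi\alpha$ is a constant $c$; writing $\alpha_0=\alpha-c\eta$ (so $i_\xi\alpha_0=0$) and using the standard Sasakian identities $d^*\eta=0$ and $\lap\eta=2n\eta$, one computes $\|d\alpha_0\|^2=-c\langle\alpha_0,\lap\eta\rangle$, a multiple of $\int_M i_\xi\alpha_0=0$; hence $d\alpha_0=0$, i.e.\ $c\,d\eta=0$ and $c=0$. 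For a basic $1$-form, $d\alpha=d_B\alpha$ and, from $*\alpha=\eta\wedge *_B\alpha$ with the term $d\eta\wedge *_B\alpha$ vanishing for transverse-degree reasons, $d^*\alpha=d_B^*\alpha$, so being harmonic is the same as being basic-harmonic, $\Delta_B\alpha=0$. Finally the transverse Kähler identities give, exactly as on a compact Kähler manifold, that a basic $1$-form is basic-harmonic iff it is both $d_B$- and $d_B^c$-closed; concretely $d_B\alpha=d_B^c\alpha=0$ forces the $(1,0)$-part $\alpha^{1,0}$ to be transversely holomorphic, hence basic-harmonic. Making this basic Hodge package fully rigorous is the main obstacle.

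For directness, if $\alpha=d_B^c g$ is also harmonic then, being basic-harmonic, it is $(d_B^c)^*$-closed by the transverse Kähler identities, so $\|\alpha\|^2=\langle d_B^c g,\alpha\rangle=\langle g,(d_B^c)^*\alpha\rangle=0$ and $\alpha=0$; this establishes~\eqref{omegafirstixiliexidbc}. For the cohomology, $\tievsky^0(M)=\R$ gives $H^1(\tievsky^\bullet(M))=\ker\bigl(d\colon\tievsky^1(M)\to\tievsky^2(M)\bigr)$. An element $\alpha_h+d_B^c g+c\eta$ with $\alpha_h\in\Omega^1_\lap(M)$ has differential $d_Bd_B^c g+c\,d\eta$; since $d_Bd_B^c g$ is $d_B$-exact while the transverse Kähler class $[d\eta]_B\neq0$, closedness forces $c=0$ and $d_Bd_B^c g=0$. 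Then $d_B^c g$ is both $d_B$- and $d_B^c$-closed (the latter as $(d_B^c)^2=0$), hence harmonic by the identification above, hence $0$ by directness. Therefore $\ker d|_{\tievsky^1(M)}=\Omega^1_\lap(M)$, which is the asserted description of $H^1$.
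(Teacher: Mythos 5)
Your argument is correct in outline, but it takes a genuinely different — and substantially heavier — route at the central step, namely the identification of $\Omega^1_{i_\xi,\lie_\xi,d_B^c,d_B}(M)$ with $\Omega^1_\lap(M)$. You prove this equality directly, via the full transverse Kähler Hodge package: harmonic $1$-forms are basic (your sketch of Tachibana's argument is fine), $\lap$ agrees with the basic Laplacian on basic $1$-forms, and the transverse Kähler identities show that a basic $1$-form is basic-harmonic iff it is $d_B$- and $d_B^c$-closed. All of this is available in the literature on taut transversely Kähler foliations, but, as you yourself flag, it is the hard part and is only sketched here. The paper avoids it entirely: it proves only the easy inclusion $\Omega^1_\lap(M)\subset\Omega^1_{i_\xi,\lie_\xi,d_B^c,d_B}(M)$ (using Tachibana and Fujitani's result that $\Omega^1_\lap(M)$ is $i_\varphi$-invariant, so $d_B^c\alpha=-\lie_\varphi\alpha=d(i_\varphi\alpha)=0$ for harmonic $\alpha$), observes that closed elements of $\tievsky^1(M)$ inject into $H^1(\tievsky^\bullet(M))$, and then concludes by a dimension count, since the quasi-isomorphism \eqref{tievskyquasiiso} together with ordinary Hodge theory gives $\dim H^1(\tievsky^\bullet(M))=b_1(M)=\dim\Omega^1_\lap(M)$. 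This simultaneously yields \eqref{omegafirstixiliexidbc} and the statement about $H^1$, whereas you have to compute $\ker d|_{\tievsky^1(M)}$ separately afterwards (your computation there is correct, granting $[d\eta]_B\neq 0$). Two smaller divergences: in degree $0$ you argue by hand that a basic function with $i_\varphi df=0$ is constant, while the paper applies \eqref{kerdbc} in degree zero; and for directness of the sum you use the $L^2$ pairing and $(d_B^c)^*$, while the paper uses the $d_Bd_B^c$-lemma to place a form in $\im(d_Bd_B^c)$, which vanishes in degree $1$ for degree reasons — the paper's version needs no Hodge theory. In short: your proof works if the transverse Hodge theory is properly cited or established, but the paper's dimension-count trick buys a much shorter path that leans only on classical (non-basic) Hodge theory plus the quasi-isomorphism property of the Tievsky model.
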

\begin{proof}
Applying \eqref{kerdbc} in degree zero, we get
\begin{equation*}
\begin{aligned}
\R \subset \Omega^0_{i_\xi,\lie_\xi,d_B^c}(M) =
\Omega^0_{i_\xi,\lie_\xi,d_B^c,d_B}(M) \subset \Omega^0_{d}(M) = \R.
\end{aligned}
\end{equation*}
This shows~\eqref{Omega0liexilievarphi}.
In degree $1$, we get
\begin{equation*}
\begin{aligned}
\Omega^1_{i_\xi,\lie_\xi,d_B^c} (M) =
\Omega^1_{i_\xi,\lie_\xi,d_B^c,d_B} (M)  + d_B^c \Omega^0_{i_\xi,\lie_\xi}(M).
\end{aligned}
\end{equation*}
The sum on the right side is direct. Indeed,
suppose $\beta =
d_B^c h \in \Omega^1_{i_\xi,\lie_\xi,d_B^c,d_B}(M)$ for some basic function
$h$. Then by $d_Bd_B^c$-lemma $\beta$ is in the image of $d_B d_B^c$, and, for the degree
reasons, $\beta=0$.
Thus
\begin{equation}
\label{firstcomponentsecondtievsky}
\begin{aligned}
\tievsky^1(M) = \Omega^1_{i_\xi,\lie_\xi,d_B^c,d_B} (M)  \oplus d_B^c
\Omega^0_{i_\xi,\lie_\xi}(M) \oplus \R\eta.
\end{aligned}
\end{equation}

As the zeroth component of the model is $\R$, we see that the image of the
differential in the first component is zero. Hence the first cohomology group
of the Tievsky model coincides with the kernel of $d_B$ restricted to
$\tievsky^1(M)$.
By~\cite{tachibana} and~\cite[Thm.~4.1]{fujitani}, it is known that every harmonic $1$-form on a Sasakian manifold is
basic and that the space $\Omega^1_\lap(M)$ is invariant under the action of
$i_\varphi$. The second property implies that for every harmonic $1$-form
$\alpha$ we have $d_B^c \alpha = -\lie_\varphi \alpha = d (i_\varphi \alpha) =
0$. Thus
\begin{equation*}
\begin{aligned}
\Omega^1_\lap (M) \subset \Omega^1_{i_\xi,\lie_\xi,d_B^c,d_B}(M) \subset
H^1(\Omega^\bullet_{i_\xi,\lie_\xi,d_B^c}(M)[\eta] ).
\end{aligned}
\end{equation*}
For dimension reasons the above inclusions are equalities.
This shows the last claim of the proposition.
Finally,~\eqref{omegafirstixiliexidbc} follows from
\eqref{firstcomponentsecondtievsky} and
$\Omega^1_\lap(M) = \Omega^1_{i_\xi,\lie_\xi,d_B^c,d_B} (M)$.
\end{proof}

Now we introduce the second Tievsky model.
Consider the graded algebra $H_B(M)[t]/t^2$ with \mbox{$\deg(t) =
1$}.
It is a CDGA with the differential $d(a + bt) = b [d\eta]_B$.
Tievsky showed that the epimorphism
\begin{equation*}
\begin{aligned}
\tievsky^\bullet(M)_\C = \Omega^\bullet_{i_\xi,\lie_\xi,d_B^c}(M)_\C[\eta]  & \epi H_B(M)_\C [t]/t^2\\
\alpha + \beta \wedge \eta & \mapsto [\alpha] + [\beta] t
\end{aligned}
\end{equation*}
is a quasi-isomorphism. This implies that the corresponding map
\begin{equation*}
\begin{aligned}
\tievsky^\bullet(M)  \epi H_B(M) [t]/t^2
\end{aligned}
\end{equation*}
is a quasi-isomorphism. Thence $H_B(M)[t]/t^2$ is a model of
$\Omega^\bullet(M)$. We call it the \emph{second Tievsky model} of $M$.

It was observed by Bazzoni in~\cite{bazzoni} that our proof of the main result
in~\cite{imrn} can be modified to imply the following proposition.
For completeness, we give here a proof based on our results
in~\cite{israel}.
\begin{proposition}
\label{heisenberg}
If $M^{2n+1}$ is a compact aspherical Sasakian manifold with nilpotent
fundamental group $\Gamma$, then $G(\Gamma)$ is isomorphic to the Heisenberg
group  $H(1,n)$.
 \end{proposition}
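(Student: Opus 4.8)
The plan is to translate everything into the Chevalley--Eilenberg algebra of $\liealg := \liealg_\Gamma$ and to force this algebra to be that of the Heisenberg Lie algebra $\heis$ of dimension $2n+1$. First I would record that, $M$ being a closed aspherical manifold, $\Gamma$ is a finitely generated torsion-free nilpotent group, so $M$ is homotopy equivalent to the nilmanifold $N_\Gamma$. By Nomizu's theorem \cite{nomizu} the Chevalley--Eilenberg map $\psi_\Gamma \from \bigwedge \liealg^* \to \Omega^\bullet(N_\Gamma)$ is a quasi-isomorphism, hence by \Cref{realhomotopymodels} the CDGA $\bigwedge \liealg^*$ is a real homotopy model of $M$. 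Since $\liealg$ is nilpotent, $\bigwedge \liealg^*$ is a \emph{minimal} Sullivan algebra generated entirely in degree $1$, so it is \emph{the} minimal model of $M$; comparing top cohomology gives $\dim \liealg = 2n+1$. Thus it suffices to show that this degree-$1$ generation forces $\liealg \cong \heis$.

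Next I would feed in the second Tievsky model $H_B(M)[t]/t^2$, which is also a model of $M$, so its minimal model is again $\bigwedge \liealg^*$. I would recognise this CDGA as the Sullivan model of the circle bundle attached to the transverse Kähler structure with Euler class $L := [d\eta]_B \in H_B^2(M)$. The $d_B d_B^c$-lemma \cite{wolak} makes the basic cohomology $H_B(M)$ \emph{formal}, so I may pick a minimal model $m \from \mathcal{M}_B \to H_B(M)$, choose a cocycle $z \in \mathcal{M}_B^2$ with $m(z)=L$, and form $\mathcal{M}_B \otimes \bigwedge(u)$ with $\deg u = 1$ and $du = z$; the map restricting to $m$ and sending $u \mapsto t$ is a quasi-isomorphism onto $H_B(M)[t]/t^2$ by the Gysin comparison.

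The heart of the argument is then a minimal-model bookkeeping. The minimal model of $M$ is generated in degree $1$, and the only degree-raising datum in $\mathcal{M}_B \otimes \bigwedge(u)$ is $du = z$, which upon minimalisation can cancel at most one indecomposable degree-$2$ generator (the linear part of $z$). I would argue that this forces $\mathcal{M}_B$ itself to be generated in degree $1$, so $\mathcal{M}_B = \bigwedge \klie^*$ for a nilpotent Lie algebra $\klie$ with $H^\bullet(\bigwedge \klie^*) \cong H_B(M)$. Formality of $H_B(M)$ now meets Hasegawa's theorem — a compact nilmanifold is formal precisely when it is a torus — forcing $\klie$ to be abelian, whence $H_B(M) \cong \bigwedge\!\big(H_B^1(M)\big)$ is an exterior algebra. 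I expect this descent of degree-$1$ generation from the total space to the transverse minimal model (and the careful treatment of the decomposable versus indecomposable cases for $z$) to be the main obstacle, since it is exactly here that the non-formal circle-bundle model interacts with the asphericity of $M$.

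To finish, the transverse orientation gives $H_B^{2n}(M) = \R$ and $H_B^{>2n}(M)=0$, so $\dim H_B^1(M) = 2n$, while $\int_M \eta \wedge (d\eta)^n \neq 0$ shows $L^n \neq 0$, i.e.\ $L$ is a nondegenerate element of $\bigwedge^2 H_B^1(M)$. Therefore $\bigwedge \liealg^* \cong \mathcal{M}_B \otimes \bigwedge(u) = \bigwedge\!\big( H_B^1(M) \oplus \R u \big)$ with $H_B^1(M)$ closed and $du = L$ symplectic. Dualising, $\liealg$ splits as a $2n$-dimensional complement of a one-dimensional centre $\R Z$ (with $Z = u^*$) on which the bracket is $[X,Y] = -L(X,Y)\,Z$; nondegeneracy of $L$ identifies this with $\heis$. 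Hence $G(\Gamma) = \exp(\liealg) \cong H(1,n)$.
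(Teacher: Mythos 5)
Your setup coincides with the paper's: both use that $M$ is homotopy equivalent to $N_\Gamma$, that $\bigwedge\liealg_\Gamma^*$ is a (minimal) real homotopy model of $M$ by Nomizu's theorem and \Cref{realhomotopymodels}, and that the second Tievsky model $H_B(M)[t]/t^2$ with $dt=[d\eta]_B$ and $[d\eta]_B^n\neq 0$ is another model. The entire content is then the algebraic assertion that these data force $\liealg_\Gamma\cong\heis$. The paper imports exactly this assertion from \cite[Thm.~5.3]{israel}; you attempt to prove it directly by a minimal-model computation, so everything hinges on whether that computation closes. (A side remark: formality of $H_B(M)$ is automatic, since it carries the zero differential; what the $d_Bd_B^c$-lemma is actually needed for is Tievsky's quasi-isomorphism itself, which the paper already quotes.)

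The computation does not close, precisely at the step you flag as the main obstacle. Write $\mathcal{M}_B=\bigwedge V_B$ for the minimal model of $H_B(M)$ and form $\bigwedge(V_B\oplus\R u)$ with $du=z$. The indecomposables of the minimal model of this Sullivan algebra are governed by the linear part of the differential, which is zero on $V_B$ and sends $u$ to the linear part $z_1$ of $z$. Degree-one generation of $\bigwedge\liealg_\Gamma^*$ therefore forces $V_B^{\geq 3}=0$ and $V_B^2\subseteq\R z_1$, but it does \emph{not} force $V_B^2=0$: the case $V_B^2=\R z_1\neq 0$, in which $u$ cancels a unique degree-two generator and the minimalisation is still generated in degree one (now with $\dim V_B^1=2n+1$ while $\dim H_B^1(M)=2n$), is exactly what your phrase ``can cancel at most one indecomposable degree-$2$ generator'' describes rather than excludes. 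In that case $\mathcal{M}_B$ is not a Chevalley--Eilenberg algebra, so the appeal to Hasegawa's theorem is unavailable and the conclusion that $H_B(M)$ is the exterior algebra on $H_B^1(M)$ is unsupported. Ruling this case out requires a further argument (e.g.\ playing the lone non-closed degree-one generator of $\mathcal{M}_B$ against Poincaré duality of $H_B(M)$ in degree $2n$; for $n=1$ one sees directly that it produces too much $H^2$), and this missing argument is in effect what \cite[Thm.~5.3]{israel} supplies. The remaining steps --- $L^n\neq 0$, and the identification of a two-step nilpotent algebra with one-dimensional centre and nondegenerate bracket form with $\heis$ --- are fine.
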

\begin{proof}
The nilmanifold $N_\Gamma$ and $M$ are both of type $K(\Gamma,1)$, and, hence,
they are homotopy equivalent to each other.
By \Cref{realhomotopymodels},
the second Tievsky model of $M$ is  a real homotopy model of  $N_\Gamma$.
It was shown in~\cite[Thm.~5.3]{israel} that if  a nilmanifold $N_\Gamma$ of
dimension $2n+1$ has a real
homotopy model of the form $A[t]/t^2$ with $\deg(t)=1$, $dt\in A$, $[dt]^n
\not=0$, and the zero
differential on $A$, then $G(\Gamma)$ is isomorphic to $H(1,n)$.
As $[d\eta]_B^n \not=0$ by \cite[Lemma~3.1]{imrn}, we get the result.
\end{proof}
The following result is a corollary of the theory of Sullivan models.
\begin{proposition}
\label{imquasiiso}
Let $M^{2n+1}$ be a compact aspherical Sasakian manifold with nilpotent
fundamental group $\Gamma$. Then
there is a quasi-isomorphism of CDGAs $\rho\colon \bigwedge \liealg_\Gamma^* \to
\Omega^\bullet(M) $ such that $\im\rho \subset \tievsky^\bullet(M)$.
 \end{proposition}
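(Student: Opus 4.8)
The plan is to produce $\rho$ by \emph{lifting} an arbitrary direct quasi-isomorphism $\bigwedge \liealg_\Gamma^* \to \Omega^\bullet(M)$ through the Tievsky inclusion, exploiting that $\bigwedge \liealg_\Gamma^*$ is a Sullivan algebra. Recall from \eqref{tievskyquasiiso} that the inclusion $\iota\colon \tievsky^\bullet(M) \hookrightarrow \Omega^\bullet(M)$ is a quasi-isomorphism. Hence it suffices to exhibit a homomorphism of CDGAs $\tilde\rho\colon \bigwedge \liealg_\Gamma^* \to \tievsky^\bullet(M)$ whose composition with $\iota$ is a quasi-isomorphism; then $\rho := \iota \circ \tilde\rho$ automatically has $\im \rho \subset \tievsky^\bullet(M)$ and is a quasi-isomorphism.

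First I would construct a direct quasi-isomorphism $f\colon \bigwedge \liealg_\Gamma^* \to \Omega^\bullet(M)$. Since $M$ and the nilmanifold $N_\Gamma$ are both of type $K(\Gamma,1)$, they are homotopy equivalent, so by \Cref{realhomotopymodels} there is a quasi-isomorphism $\Omega^\bullet(N_\Gamma) \to \Omega^\bullet(M)$. Composing it with the Nomizu quasi-isomorphism $\psi_\Gamma\colon \bigwedge \liealg_\Gamma^* \to \Omega^\bullet(N_\Gamma)$ yields the desired $f$, which is a quasi-isomorphism as a composition of two quasi-isomorphisms. Note that this step uses only the nilpotency of $\Gamma$, not the identification of $G(\Gamma)$ with $H(1,n)$.

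Then I would apply the lifting property \Cref{lifting} to the quasi-isomorphism $\iota\colon \tievsky^\bullet(M) \to \Omega^\bullet(M)$ and the homomorphism $f\colon \bigwedge \liealg_\Gamma^* \to \Omega^\bullet(M)$. Since $\bigwedge \liealg_\Gamma^*$ is Sullivan (being the Chevalley-Eilenberg algebra of the nilpotent Lie algebra $\liealg_\Gamma$), this produces a homomorphism $\tilde\rho\colon \bigwedge \liealg_\Gamma^* \to \tievsky^\bullet(M)$ with $H^\bullet(\iota)\circ H^\bullet(\tilde\rho) = H^\bullet(f)$. Setting $\rho := \iota \circ \tilde\rho$, we obtain $H^\bullet(\rho) = H^\bullet(f)$, which is an isomorphism, so $\rho$ is a quasi-isomorphism; and $\im \rho \subset \im \iota = \tievsky^\bullet(M)$, as required.

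Since every step is a formal application of results already available, I do not anticipate a genuine obstacle here. The only point requiring care is that \Cref{lifting} controls the lift $\tilde\rho$ only up to cohomology rather than on the nose; but this is precisely enough, because post-composing with the quasi-isomorphism $\iota$ converts the cohomology identity $H^\bullet(\iota)\circ H^\bullet(\tilde\rho)=H^\bullet(f)$ directly into the statement that $\rho$ is a quasi-isomorphism, while the factorization through $\tievsky^\bullet(M)$ guarantees the image condition.
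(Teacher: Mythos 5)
Your proposal is correct and follows essentially the same route as the paper: construct $f\circ\psi_\Gamma\colon\bigwedge\liealg_\Gamma^*\to\Omega^\bullet(M)$ from the homotopy equivalence of the $K(\Gamma,1)$'s, then lift it through the Tievsky inclusion using \Cref{lifting} and the Sullivan property of $\bigwedge\liealg_\Gamma^*$. The handling of the cohomology-level identity from the lifting property matches the paper's argument as well.
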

\begin{proof}
By \Cref{realhomotopymodels}, there is a quasi-isomorphism $f\from
\Omega^\bullet(N_\Gamma) \to \Omega^\bullet(M)$.
Write $i$ for the embedding of $\tievsky^\bullet(M)$ into
$\Omega^\bullet(M)$.
Since $\liealg_\Gamma$ is
nilpotent, the CDGA $\bigwedge \liealg_\Gamma^*$ is Sullivan. By the lifting property
for Sullivan algebras (see Prop.~\ref{lifting}) applied to  $f\circ \psi_\Gamma
\from \bigwedge \liealg_\Gamma^* \to \Omega^\bullet(M)$ and $i\from
\tievsky^\bullet(M) \to \Omega^\bullet(M)$, there is a homomorphism
of CDGAs
\begin{equation*}
\begin{aligned}
j\colon \bigwedge \liealg_\Gamma^* \to \tievsky^\bullet(M)
\end{aligned}
\end{equation*}
such
that $H^\bullet(i) \circ H^\bullet(j) = H^\bullet(f\circ \psi_\Gamma)$.
Since $i$ and $f\circ \psi_\Gamma$ are quasi-isomorphisms, we get that also
$j$ is a quasi-isomorphism. Thus $\rho := i\circ j \colon \bigwedge
\liealg_\Gamma^* \to \Omega^\bullet(M)$ is a quasi-isomorphism of CDGAs with the
claimed
property.
\end{proof}

\begin{proposition}
\label{etaheis}
Let $M^{2n+1}$ be a compact aspherical Sasakian manifold with nilpotent
fundamental group $\Gamma$.
If
  $\rho\colon \bigwedge \liealg_\Gamma^* \to
\Omega^\bullet(M) $ a quasi-isomorphism such that $\im\rho \subset \tievsky^\bullet(M)$, then $\rho$
induces an isomorphism between the space  of closed elements
in
$\bigwedge^1 \liealg_\Gamma^*$  and
$\Omega^1_\lap(M)$.
Moreover, there is $\tileta\in
\liealg_\Gamma^*$ and $f\in \Omega^0_{\lie_\xi}(M)$ such that $\rho(\tileta) = \eta +
\lie_\varphi f$.
\end{proposition}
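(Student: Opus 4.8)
The plan is to prove the two assertions in turn, leaning on the explicit description of the first Tievsky model in \Cref{Omega1liexiliephi} and on the fact, supplied by \Cref{heisenberg}, that $\liealg_\Gamma$ is the $(2n+1)$-dimensional Heisenberg Lie algebra $\heis$. For the first assertion I would begin with two bookkeeping observations. Since $\bigwedge^0\liealg_\Gamma^*=\R$ there are no coboundaries in degree one, so the space $Z^1$ of closed elements of $\bigwedge^1\liealg_\Gamma^*$ is exactly $H^1(\bigwedge\liealg_\Gamma^*)$; likewise, since $\tievsky^0(M)=\R$, the closed elements of $\tievsky^1(M)$ coincide with $H^1(\tievsky^\bullet(M))$, which equals $\Omega^1_\lap(M)$ by \Cref{Omega1liexiliephi}. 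Because $\rho$ is a chain map whose image lies in $\tievsky^\bullet(M)$, it carries a closed $\alpha\in Z^1$ to a closed element of $\tievsky^1(M)$, hence into $\Omega^1_\lap(M)$.

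To see that $\rho|_{Z^1}\from Z^1\to\Omega^1_\lap(M)$ is an isomorphism, I would argue as follows. It is injective because $H^1(\rho)$ is: if $\rho(\alpha)=0$ for $\alpha\in Z^1$, then $H^1(\rho)[\alpha]=0$, so $[\alpha]=0$ in $H^1(\bigwedge\liealg_\Gamma^*)=Z^1$, i.e.\ $\alpha=0$. For surjectivity, given $\beta\in\Omega^1_\lap(M)$ there is $\alpha\in Z^1$ with $H^1(\rho)[\alpha]=[\beta]$; then $\rho(\alpha)$ and $\beta$ are two cohomologous harmonic $1$-forms, hence equal by uniqueness of harmonic representatives. (Alternatively, surjectivity is immediate from the dimension count $\dim Z^1=b_1(M)=\dim\Omega^1_\lap(M)$, the first equality holding because $\rho$ is a quasi-isomorphism.)

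For the second assertion I would fix a Heisenberg coframe of $\liealg_\Gamma^*$: closed generators $\theta_1,\dots,\theta_{2n}$ together with a generator $z$ satisfying $dz=\omega_0:=\sum_{i}\theta_{2i-1}\wedge\theta_{2i}$. Then $z\wedge(dz)^n=n!\,\theta_1\wedge\cdots\wedge\theta_{2n}\wedge z$ is closed, since $d\bigl(z\wedge(dz)^n\bigr)=\omega_0^{n+1}=0$, and it generates $H^{2n+1}(\bigwedge\liealg_\Gamma^*)\cong\R$. Using the direct sum decomposition \eqref{omegafirstixiliexidbc}, I would write $\rho(z)=\alpha+d_B^c g+c\,\eta$ with $\alpha\in\Omega^1_\lap(M)$, $g\in\Omega^0_{i_\xi,\lie_\xi}(M)$ (for a function the $i_\xi$-condition is vacuous, so $g$ is basic), and $c\in\R$; recall $d_B^c=-\lie_\varphi$ on basic forms. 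The crux is to show $c\neq0$, and this is the step I expect to be the main obstacle. I would argue it by applying $\rho$ to $z\wedge(dz)^n$: as $\rho$ is a quasi-isomorphism, $\rho(z)\wedge(d\rho(z))^n=\rho\bigl(z\wedge(dz)^n\bigr)$ represents a nonzero class in $H^{2n+1}(M)$ and so is a nonzero form. If $c=0$, then $\rho(z)=\alpha+d_B^c g$ is basic (harmonic $1$-forms on a Sasakian manifold are basic, and $d_B^c$ preserves basic forms), whence $d\rho(z)$ and the whole product $\rho(z)\wedge(d\rho(z))^n$ are basic of top degree $2n+1$; but a basic $(2n+1)$-form is annihilated by $i_\xi$ and therefore vanishes, contradicting nonvanishing of its class. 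Hence $c\neq0$.

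Finally I would remove the harmonic part using the first assertion. Let $\beta\in\bigwedge^1\liealg_\Gamma^*$ be the unique closed element with $\rho(\beta)=\tfrac1c\alpha$, and set $\tileta:=\tfrac1c z-\beta\in\liealg_\Gamma^*$. Then
\begin{equation*}
\begin{aligned}
\rho(\tileta)=\tfrac1c\rho(z)-\rho(\beta)
=\tfrac1c\bigl(\alpha+d_B^c g+c\,\eta\bigr)-\tfrac1c\alpha
=\eta+\tfrac1c d_B^c g=\eta+\lie_\varphi f,
\end{aligned}
\end{equation*}
where $f:=-\tfrac1c g\in\Omega^0_{\lie_\xi}(M)$, as required. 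All steps apart from the vanishing argument for $c$ are formal once \eqref{omegafirstixiliexidbc} and the identification of the closed elements of $\tievsky^1(M)$ with $\Omega^1_\lap(M)$ are in place; the only genuinely geometric input is the use of the Heisenberg top form and the fact that basic forms of degree $2n+1$ vanish.
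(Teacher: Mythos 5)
Your proof is correct and follows essentially the same route as the paper: identify $Z^1$ with $\Omega^1_\lap(M)$ via $H^1$, decompose the image of a complement of $Z^1$ using \eqref{omegafirstixiliexidbc}, and rule out a vanishing $\eta$-coefficient by observing that otherwise the top-degree part of $\rho$ would land in $\Omega^{2n+1}_B(M,\foliation)=0$, contradicting the quasi-isomorphism property. The only difference is cosmetic: you test against the explicit generator $z\wedge(dz)^n$ of $H^{2n+1}(\bigwedge\liealg_\Gamma^*)$ coming from a Heisenberg coframe, whereas the paper argues with the whole top-degree space $\R\beta\wedge\bigwedge^{2n}Z^1$.
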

\begin{proof}
By \Cref{Omega1liexiliephi}, we know that $\Omega^1_\lap(M) = H^1(\tievsky^\bullet(M))$.
Write $Z^1$ for the set of closed elements in $\bigwedge^1 \liealg_\Gamma^* $.
As $\bigwedge^0 \liealg_\Gamma^* =\R$, we have $Z^1 = H^1(\bigwedge \liealg_\Gamma^* )$.

Denote by $\hat\rho\colon \bigwedge \liealg_\Gamma^*  \to \tievsky^\bullet(M)$ the
map $\omega \mapsto \rho(\omega)$.
As $\rho$ and the inclusion  $\tievsky^\bullet(M) \hookrightarrow
\Omega^\bullet(M)$  are quasi-isomorphisms, also
 $\hat\rho$ is a quasi-isomorphism.
Hence the
quasi-isomorphism $\hat\rho$ induces an  isomorphism between $ Z^1$ and  $\Omega^1_\lap(M)$.

Now we will show the existence of $\tileta \in \liealg_\Gamma^* $ and $f\in
\Omega^0_{\lie_\xi}(M)$ with the
claimed properties.
By \Cref{heisenberg} the Lie algebra $\liealg_\Gamma$ is isomorphic to the
Heisenberg Lie algebra.
In particular, the codimension of $Z^1$ in $\liealg_\Gamma^* $ is one.
Choose an arbitrary $\beta \in
\liealg_\Gamma^*  \setminus Z^1$.
From \Cref{Omega1liexiliephi}, we get that  $\rho(\beta) =\hat\rho(\beta) = \omega +
\lie_\varphi h + \lambda\eta $ for some  $\omega \in \Omega^1_\lap(M)$, $h \in
\Omega^0_{\lie_\xi}(M)$ and $\lambda \in \R$. As $\rho|_{Z^1}\colon Z^1 \to
\Omega^1_{\lap}(M)$ is an isomorphism, there is $\alpha \in Z^1$ such that
$\rho (\alpha) =\omega$. Notice that $\beta-\alpha \not \in Z^1$. Hence,
replacing $\beta$ with $\beta -\alpha$, we can assume that $\omega =0$.

Next, we show that $\lambda\not=0$.
The top component of $\hat\rho$ can be seen as a map from $\R\beta \wedge
(\bigwedge^{2n} Z^1)$ to $\tievsky^{2n+1}(M) =
\Omega^{2n}_{B}(M,\foliation)\wedge \eta$.
If $\lambda=0$, then $\hat\rho(\R \beta) \subset \R (d_B^c h) \subset \Omega^1_{B}(M,\foliation)$.
As $\hat\rho(Z^1)$ is also a subset of $\Omega^1_{B}(M,\foliation)$, we get
that $\hat\rho(\R\beta \wedge ( \bigwedge^{2n} Z^1))$ should lie in the zero
space $\Omega^{2n+1}_B(M,\foliation)$. But then $H^{2n+1}(\hat\rho)$ is a zero
map, which contradicts the assumption that $\hat\rho$ is a
quasi-isomorphism.
 This shows that $\lambda\not=0$.

Now define $\tileta = (1/\lambda) \beta$.
We get $\rho(\tileta) = \lie_\varphi f + \eta$, where $f = (1/\lambda) h$.
\end{proof}

\section{Proof of \Cref{main2}}
\label{PROOF}

Let $(M^{2n+1},\varphi,\xi,\eta)$ be a compact nilpotent aspherical Sasakian manifold with
nilpotent group $\Gamma$.
By \Cref{heisenberg}, the group $G(\Gamma)$ is isomorphic to the Heisenberg
group $H(1,n)$, and, hence, $M$ is homotopy equivalent to a Heisenberg
nilmanifold $N_\Gamma = \Gamma \bs G(\Gamma)$.

\Cref{imquasiiso} implies that there is a $\liealg_\Gamma$-valued $1$-form
$\omega$ on $M$ such that $\tr{\omega} \from \bigwedge {\liealgdual}_\Gamma \to
\Omega^\bullet(M)$ is a quasi-isomorphism of CDGAs whose image lies in
$\tievsky^\bullet(M)$.
By \Cref{main1}, $\tr{\omega} = h^*_\omega \circ \psi_\Gamma \circ a_\omega$.
Write $h$ for $h_\omega$ and $\rho$ for $\tr{\omega} \circ a_\omega^{-1} = h^*
\circ \psi_\Gamma$.
By \Cref{hsurjective}, the smooth map $h\from M \to N_\Gamma$ is a homotopy
equivalence, and, therefore, it is surjective and has degree $\pm 1$.

The image of $\rho = \tr{\omega} \circ a^{-1}_\omega$ lies in $\tievsky^\bullet(M)$. Hence, by \Cref{etaheis}, there is
$\tileta\in{\liealgdual_\Gamma}$ and $f\in \Omega^0_{\lie_\xi}(M)$, such that $h^*
\tileta = \rho (\tileta) = \eta + \lie_\varphi f$.

Our next aim is to show that $h$ is $df$-twisted with respect to a
suitable left-invariant normal almost contact structure on $N_\Gamma$.

Let $\eta_N = \psi_\Gamma (\tileta) \in \derham1(N_\Gamma)$.
To define $\xi_N$, we have to ensure that $\xi$ is projectable.

For every point $x$ in $N_\Gamma$ the map
\begin{equation*}
\begin{aligned}
\psi_{\Gamma,x} \from {\liealgdual_\Gamma}&  \to T^*_x N_\Gamma \\
\alpha & \mapsto \psi_\Gamma(\alpha)_x
\end{aligned}
\end{equation*}
is an isomorphism of vector spaces.
Write $Z^1$ for the set of closed elements in ${\liealgdual_\Gamma}$. As $Z^1 \oplus
\left\langle \tileta \right\rangle = {\liealgdual_\Gamma}$, we get that $T^*_x
N_\Gamma = \psi_{\Gamma,x}(Z^1) \oplus \left\langle \eta_{N,x} \right\rangle$
for all $x \in N_\Gamma$.
\begin{claim}
\label{projectable}
\begin{enumerate}[(i) ]
\item \label{etaNhxix} For all $x \in M$, we have $\eta_N ( Th (\xi_x) ) =1$.
\item \label{psigammaz1}
For each $x\in M$, we have $\psi_{\Gamma,h(x)} (Z^1) = \Ann (Th (\xi_x))$.
\item \label{item:proj} The vector field $\xi$ is $h$-projectable.
\end{enumerate}
\end{claim}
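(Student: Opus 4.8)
The goal is to prove Claim~\ref{projectable}, which has three parts that build on each other. The key structural input is the decomposition $h^*\tileta = \rho(\tileta) = \eta + \lie_\varphi f$ from \Cref{etaheis}, together with the splitting ${\liealgdual_\Gamma} = Z^1 \oplus \langle\tileta\rangle$ and the pointwise isomorphism $\psi_{\Gamma,x}$.

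\textbf{Proof plan.}
For part~\eqref{etaNhxix}, the plan is to compute $\eta_N(Th(\xi_x))$ directly by pulling back along $h$. Since $\eta_N = \psi_\Gamma(\tileta)$ and $h^*\psi_\Gamma(\tileta) = h^*\eta_N$, naturality of pullback gives $\eta_N(Th(\xi_x)) = (h^*\eta_N)_x(\xi_x) = (\eta + \lie_\varphi f)_x(\xi_x)$. Now I would evaluate the right-hand side: $\eta(\xi) = 1$ by the almost contact axiom, and the correction term $\lie_\varphi f = d^c_B f$ is a basic $1$-form, so it is annihilated by $i_\xi$. Thus $(\lie_\varphi f)(\xi) = 0$ and the whole expression equals $1$. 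The only point needing care is to confirm $i_\xi(\lie_\varphi f) = 0$: since $f\in\Omega^0_{\lie_\xi}(M)$ and $\lie_\varphi f = -d^c_B f$ lies in the basic complex, it is in the kernel of $i_\xi$ by the very definition of basic forms.

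For part~\eqref{psigammaz1}, I would argue by a dimension count combined with an inclusion. First, using $h^*\psi_\Gamma = \rho$ and the fact that $\rho(Z^1) = \Omega^1_\lap(M)$ consists of basic (hence $i_\xi$-killed) forms, I get that for $\alpha\in Z^1$, $(\psi_\Gamma(\alpha))_{h(x)}(Th(\xi_x)) = (\rho(\alpha))_x(\xi_x) = 0$, so $\psi_{\Gamma,h(x)}(Z^1) \subseteq \Ann(Th(\xi_x))$. By part~\eqref{etaNhxix}, $Th(\xi_x) \neq 0$, so its annihilator in $T^*_{h(x)}N_\Gamma$ has codimension exactly one; and since $\psi_{\Gamma,h(x)}$ is an isomorphism, $\psi_{\Gamma,h(x)}(Z^1)$ also has codimension one (as $Z^1$ has codimension one in ${\liealgdual_\Gamma}$). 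Matching dimensions forces the inclusion to be an equality.

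Part~\eqref{item:proj} is the substantive one and the main obstacle. Recall $\xi$ is $h$-projectable means $Th(\xi_x)$ depends only on $h(x)$, i.e.\ that there is a well-defined vector field $\xi_N$ on $N_\Gamma$ with $Th\circ\xi = \xi_N\circ h$. The plan is to exhibit $\xi_N$ explicitly as the vector field dual to the splitting: at each $y\in N_\Gamma$, let $\xi_{N,y}$ be the unique vector in $\bigcap_{\alpha\in Z^1}\ker(\psi_\Gamma(\alpha)_y)$ normalized by $\eta_{N,y}(\xi_{N,y}) = 1$; this is well-defined and smooth because $Z^1$ is spanned by globally defined closed left-invariant forms whose $\psi_\Gamma$-images annihilate a one-dimensional left-invariant distribution, and $\eta_N$ is nonvanishing on it. Then I must check $Th(\xi_x) = \xi_{N,h(x)}$ for every $x$. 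By part~\eqref{psigammaz1}, $Th(\xi_x)$ lies in $\psi_{\Gamma,h(x)}(Z^1)^\perp = \bigcap_{\alpha\in Z^1}\ker(\psi_\Gamma(\alpha)_{h(x)})$, which is the same one-dimensional subspace containing $\xi_{N,h(x)}$; and by part~\eqref{etaNhxix}, $\eta_N$ takes the value $1$ on $Th(\xi_x)$, matching the normalization of $\xi_{N,h(x)}$. Since both vectors lie in the same line and have the same $\eta_N$-value, they coincide. The delicate point is verifying that the candidate distribution $\bigcap_{\alpha\in Z^1}\ker\psi_\Gamma(\alpha)$ is genuinely one-dimensional and that $\eta_N$ is nonzero on it at every point; both follow from the pointwise isomorphism property of $\psi_{\Gamma,y}$ and the direct sum ${\liealgdual_\Gamma} = Z^1\oplus\langle\tileta\rangle$, so $\xi_N$ is a smooth left-invariant vector field and $\xi$ projects onto it.
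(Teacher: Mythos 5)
Your proposal is correct and follows essentially the same route as the paper: part (i) by pulling back $\eta_N$ through $h^*\psi_\Gamma=\rho$ and using that the correction term $\lie_\varphi f = i_\varphi df$ vanishes on $\xi$, part (ii) by the inclusion $\psi_{\Gamma,h(x)}(Z^1)\subset\Ann(Th(\xi_x))$ (via basicness of the harmonic forms $\rho(\alpha)$) plus a codimension count, and part (iii) by observing that the annihilator condition from (ii) pins $Th(\xi_x)$ down to a line and the normalization from (i) pins it down within that line. The only cosmetic difference is that in (iii) you construct $\xi_N$ intrinsically first and then verify $Th\circ\xi=\xi_N\circ h$, whereas the paper compares $Th(\xi_x)$ and $Th(\xi_{x'})$ for two points in the same fiber directly; the underlying argument is identical.
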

\begin{proof}
 For every
$x\in M$, we have
\begin{equation*}
\begin{aligned}
\eta_N (Th(\xi_x)) & =  h^*(
\psi_\Gamma (\tileta)) ( \xi_x) = \rho(\tileta) (\xi_x)  =  (\eta  +
i_\varphi df ) (\xi_x) = 1.
\end{aligned}
\end{equation*}
For every $\alpha \in Z^1$, we get
\begin{equation*}
\begin{aligned}
\psi_{\Gamma}(\alpha) (Th(\xi_{x})) & = (h^*\circ \psi_\Gamma)
(\alpha) (\xi_x) = \rho(\alpha) (\xi_x) = (i_\xi \rho(\alpha))_x.
\end{aligned}
\end{equation*}
By~\Cref{etaheis}, the $1$-form $\rho(\alpha)$ is harmonic, and, hence, basic.
\forus{Check the current formulation of \Cref{etaheis}!}
Thus $\psi_\Gamma(\alpha)(Th( \xi_x )) =0$.  This shows that
$\psi_{\Gamma,x}(Z_1) \subset \Ann (Th (\xi_{x}))$ for all $x\in M$.
It follows from~\eqref{etaNhxix} that $\dim \Ann (Th(\xi_{x})) = 2n = \dim Z^1$.
Thus $\psi_{\Gamma,x}(Z_1) = \Ann (Th (\xi_{x}))$.

It is left to show that $\xi$ is projectable.
We already know that $h$ is surjective.
Suppose $x$, $x'\in M$ are in the preimage of $y\in N_\Gamma$. By \eqref{psigammaz1},
we have
\begin{equation*}
\begin{aligned}
\Ann( Th( \xi_x)) = \psi_{\Gamma, y} (Z^1) = \Ann (Th (\xi_{x'})).
\end{aligned}
\end{equation*}
Hence $Th( \xi_x)$ and $Th(\xi_{x'})$ are collinear. By~\eqref{etaNhxix}, we know that
$\eta_N(Th( \xi_x)) = 1 = \eta_N(Th(\xi_{x'}))$. Therefore $Th( \xi_x) = Th
(\xi_{x'})$.
\end{proof}
For $y\in N_\Gamma$, define
$\xi_{N,y} = Th(\xi_x)$, where $x$ is any point in the preimage of $y$.
As $\xi$ is projectable, the vector field $\xi_N$ is well defined.

Instead of defining $\varphi_N$ on $TN_\Gamma$, we define its transpose $\varphi_N^t$ on
$T^*N_\Gamma$.
By \Cref{etaheis}, the quasi-isomorphism $\rho$ induces an isomorphism $\tau \colon Z^1 \to
\Omega^1_\lap(M)$.
By~\cite[Thm.~4.1]{fujitani} the operator $i_\varphi$ preserves harmonic
forms.
For $y\in N_\Gamma$, define $(\varphi_N^t)_y \from T^*_y N_\Gamma \to T^*_y N_\Gamma$ to be zero on
$\left\langle \eta_{N,y} \right\rangle$ and to coincide with the
composition
\begin{equation*}
\begin{aligned}
\ker \xi_{N,y}  \xrightarrow{\psi_{\Gamma,y}^{-1} } Z^1 \xrightarrow{\tau}
\Omega^1_\lap(M) \xrightarrow{i_\varphi} \Omega^1_\lap(M)
\xrightarrow{\tau^{-1}} Z^1 \xrightarrow{\psi_{\Gamma,y}}
\ker \xi_{N,y} .
\end{aligned}
\end{equation*}
on $\ker (\xi_{N,y})$.
\begin{claim}
\label{normalalmostcontact}
The triple $(\varphi_N, \xi_N, \eta_N)$ is a left-invariant normal almost contact
structure on $N_\Gamma$.
\end{claim}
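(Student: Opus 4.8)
The plan is to check, in order, that $(\varphi_N,\xi_N,\eta_N)$ is left-invariant, that it is an almost contact structure, and that it is normal; for the last point I will use the criterion recorded after~\eqref{sastensors} that the vanishing of $[\varphi_N,\varphi_N]_{FN}+2\,d\eta_N\otimes\xi_N$ already forces normality. \emph{Left-invariance:} the forms $\eta_N=\psi_\Gamma(\tileta)$ and $\psi_\Gamma(Z^1)$ lie in the image of $\psi_\Gamma$, hence are left-invariant, and since $Z^1\oplus\langle\tileta\rangle=\liealg_\Gamma^*$ they form a left-invariant coframe; by \Cref{projectable} the field $\xi_N$ is exactly the first member of the dual left-invariant frame (it pairs to $1$ with $\eta_N$ and annihilates $\psi_\Gamma(Z^1)$), so $\xi_N$ is left-invariant, and $\varphi_N^t$ has constant coefficients in this coframe ($0$ on $\tileta$, and $\tau^{-1}i_\varphi\tau$ on $Z^1$), so $\varphi_N$ is left-invariant too. \emph{Almost contact axioms:} $\eta_N(\xi_N)=1$ is \Cref{projectable}(i), and for $\varphi_N^2=-\id+\xi_N\otimes\eta_N$ I will dualize and verify $(\varphi_N^t)^2=-\id+\eta_N\otimes\xi_N$ on $T^*N_\Gamma$, where $\eta_N\otimes\xi_N$ sends $\alpha\mapsto\alpha(\xi_N)\eta_N$. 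On $\langle\eta_N\rangle$ both sides vanish since $\varphi_N^t\eta_N=0$, while on $\psi_\Gamma(Z^1)=\Ann(\xi_N)$ one has $(\varphi_N^t)^2=\psi_\Gamma\,\tau^{-1}(i_\varphi)^2\tau\,\psi_\Gamma^{-1}$, and because harmonic $1$-forms are basic (so they vanish on $\xi$) we get $(i_\varphi)^2\omega(X)=\omega(\varphi^2X)=-\omega(X)+\eta(X)\omega(\xi)=-\omega(X)$, whence $(\varphi_N^t)^2=-\id$ there, matching the right-hand side.

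For normality I first reduce to a single scalar condition. By \Cref{heisenberg}, $\liealg_\Gamma\cong\heis$, whose derived subalgebra coincides with its centre and is spanned by $\xi_N$ (indeed $\xi_N$ annihilates $Z^1=H^1(\extalg\liealg_\Gamma^*)$ and pairs to $1$ with $\tileta$). Hence every bracket of left-invariant fields is central, so $[X,Y]=-d\eta_N(X,Y)\,\xi_N$. Substituting this into
\begin{equation*}
\tfrac12[\varphi_N,\varphi_N]_{FN}(X,Y)=[\varphi_N X,\varphi_N Y]-\varphi_N[\varphi_N X,Y]-\varphi_N[X,\varphi_N Y]+\varphi_N^2[X,Y]
\end{equation*}
and using $\varphi_N\xi_N=0$, every term in which $\varphi_N$ or $\varphi_N^2$ hits a bracket drops out, leaving only $[\varphi_N X,\varphi_N Y]=-d\eta_N(\varphi_N X,\varphi_N Y)\xi_N$. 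Thus for $X,Y\in\ker\eta_N$,
\begin{equation*}
\bigl([\varphi_N,\varphi_N]_{FN}+2\,d\eta_N\otimes\xi_N\bigr)(X,Y)=2\bigl(d\eta_N(X,Y)-d\eta_N(\varphi_N X,\varphi_N Y)\bigr)\xi_N,
\end{equation*}
while the tensor vanishes identically as soon as one argument is $\xi_N$. So normality is equivalent to $d\eta_N$ being $\varphi_N$-invariant on $\ker\eta_N$, i.e.\ to $i_{\varphi_N}d\eta_N=0$.

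It remains to prove $i_{\varphi_N}d\eta_N=0$, and here I will trade the Heisenberg side for $M$. Since $d\eta_N=\psi_\Gamma(d\tileta)$ with $d\tileta\in\extalg^2 Z^1$, and $i_{\varphi_N}\psi_\Gamma(a)=\psi_\Gamma(J_Z a)$ for $a\in Z^1$ where $J_Z:=\tau^{-1}i_\varphi\tau$, I obtain $i_{\varphi_N}d\eta_N=\psi_\Gamma(i_{J_Z}d\tileta)$. As $\psi_\Gamma$ and $\rho=h^*\circ\psi_\Gamma$ are injective (\Cref{rhoinjective}), it suffices to show $\rho(i_{J_Z}d\tileta)=0$; and since $\rho$ is a CDGA map with $\rho|_{Z^1}=\tau$ intertwining $J_Z$ and $i_\varphi$,
\begin{equation*}
\rho(i_{J_Z}d\tileta)=i_\varphi\,\rho(d\tileta)=i_\varphi\,d\bigl(\eta+\lie_\varphi f\bigr)=i_\varphi d\eta+i_\varphi\,d(\lie_\varphi f).
\end{equation*}
The first summand is zero by~\eqref{iphideta}. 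For the second, \Cref{etaheis} makes $f$ basic, so $\lie_\varphi f=-d_B^c f$ and $d(\lie_\varphi f)=-d_B d_B^c f$, which on each transverse chart is $-\pi_U^*(dd^c g)$ for a local function $g$, a real $(1,1)$-form annihilated by $i_\varphi=\pi_U^*\,i_J\,(\pi_U^*)^{-1}$. Hence $\rho(i_{J_Z}d\tileta)=0$, so $i_{\varphi_N}d\eta_N=0$ and the structure is normal.

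The hard part is exactly this last reduction: at this stage $h$ is only a homotopy equivalence, so tensors cannot be transported along it. The device that rescues the argument is the injectivity of $\rho$ (\Cref{rhoinjective}), which converts the purely Lie-algebraic identity $i_{J_Z}d\tileta=0$ on $\heis$ into a genuine differential-form identity on $M$, where the Sasakian relation $i_\varphi d\eta=0$ and the $\partial\bar\partial$-type vanishing $i_\varphi d_B d_B^c f=0$ close it. Two normalizations must be watched: the Frölicher--Nijenhuis factor $\tfrac12$ in the displayed formula, which is what makes the two contributions combine as a \emph{difference} rather than a sum, and the observation $d\tileta\in\extalg^2 Z^1$, which is what gives $i_{J_Z}d\tileta$ its meaning and lets $\tau=\rho|_{Z^1}$ carry out the transfer.
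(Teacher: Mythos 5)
Your proof is correct, and while the left-invariance and almost-contact verifications coincide with the paper's, your normality argument takes a genuinely different route. The paper works dually: it pairs $[\varphi_N,\varphi_N]_{FN}+2\,d\eta_N\otimes\xi_N$ with covectors and invokes a general identity for $\beta\circ[\varphi_N,\varphi_N]_{FN}$ in terms of $d$, $\varphi_N^t$ and $\wedge^2\varphi_N^t$ (valid on any manifold); for $\beta\in\psi_\Gamma(Z^1)$ everything in sight is closed and the expression dies, and for $\beta=\eta_N$ the condition collapses to $(\wedge^2\varphi_N^t)\,d\eta_N=d\eta_N$. You instead exploit left-invariance and \Cref{heisenberg} head-on: since $\xi_N$ spans the centre $=[\liealg_\Gamma,\liealg_\Gamma]$ and $\varphi_N\xi_N=0$, the Nijenhuis tensor evaluated on left-invariant fields reduces in one line to $d\eta_N(X,Y)-d\eta_N(\varphi_NX,\varphi_NY)$, i.e.\ to $i_{\varphi_N}d\eta_N=0$. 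This is equivalent to the paper's condition (both say $d\eta_N$ is of type $(1,1)$ for the transverse complex structure), and from that point on the two proofs converge: the decisive step in each is the transfer to $M$ via the injectivity of $\rho$ (\Cref{rhoinjective}) together with $\rho|_{Z^1}=\tau$ and $d\tileta\in\extalg^2Z^1$, followed by the verification $i_\varphi\rho(d\tileta)=0$ on $M$ using \eqref{iphideta} and the vanishing of $i_\varphi d\,i_\varphi df$ for basic $f$ (which you obtain from the $(1,1)$-type of $d_Bd_B^c f$ rather than the paper's direct operator computation --- both are fine). What your approach buys is a shorter and more transparent reduction that makes visible exactly where the Heisenberg structure enters; what it costs is generality, since the paper's dual formula does not presuppose that all brackets are central. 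One small point worth spelling out if you write this up: the passage from ``$d\eta_N$ is $\varphi_N$-invariant on $\ker\eta_N$'' to ``$i_{\varphi_N}d\eta_N=0$'' uses that $\varphi_N^2=-\id$ on $\ker\eta_N$ and that $i_{\xi_N}d\eta_N=0$; your ``i.e.'' compresses a (standard but nonzero) linear-algebra step.
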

\begin{proof}
We already know that $\eta_N(\xi_N) =1$ by \Cref{projectable}\eqref{etaNhxix}.
It is immediate that $(\varphi^t_N)^2|_{\ker\xi_N} = -\id$ for all $x \in
N_\Gamma$. Also $(\varphi^t_N) \eta_N =0$.
Hence $(\varphi^t_N)^2 = -\id + \eta \otimes \xi$, i.e. $\varphi_N^2 = -\id +
\xi \otimes \eta$.
Hence $(\varphi_N, \xi_N, \eta_N)$ is an almost contact structure.

The form $\eta_N$ is left invariant by its definition.
\Cref{projectable}\eqref{psigammaz1} and the definition of $\xi_N$  imply that the space of smooth sections
of $\Ann(\xi_N) \subset T^*N_\Gamma$ is generated by left-invariant sections in $\psi_\Gamma(Z^1)$.
Hence $\Ann(\xi_N)$ is a left-invariant subbundle of $T^* N_\Gamma$. Thus
$\left\langle \xi_N \right\rangle$  is a left-invariant subbundle of
$TN_\Gamma$.
As $\eta_N$ and $\eta_N(\xi_N) =1$ are left-invariant, we conclude that
$\xi_N$ is a left-invariant vector field on $N_\Gamma$.
From the definition of $\varphi_N^t$ and the fact that $\Ann(\xi_N)$ and
$\left\langle \eta_N \right\rangle$ are
left-invariant subbundles of $T^*N_\Gamma$, it follows that $\varphi_N$ is left
invariant.

To check the normality condition $[\varphi_N,\varphi_N]_{FN} + 2 d\eta \otimes
\xi =0$, it is enough to show for every $x \in N_\Gamma$ and $\zeta \in T^*_x
N_\Gamma$, we have
\begin{equation*}
\begin{aligned}
\zeta \circ ([\varphi_N, \varphi_N]_{FN})_x
+ 2 \zeta(\xi_{N,x}) (d\eta_{N})_x =0.
\end{aligned}
\end{equation*}
 As each vector space
$T^*_x N_\Gamma$ is generated by $\psi_{\Gamma,x}(Z^1)$ and $\eta_{N,x}$, it is
enough to show that $\beta \circ [\varphi_N,\varphi_N]_{FN} + 2 \beta(\xi_N)
d\eta_N
=0$ just for $\beta \in \psi_\Gamma(Z^1)$ and for $\beta = \eta_N$. It is
easy to check that for all $\beta \in \Omega^1(N_\Gamma)$, one has
\begin{equation}
\label{betaphinphinfn}
\begin{aligned}
\beta \circ [\varphi_N,\varphi_N]_{FN} & =
2(-d ( (\varphi_N^t)^2\beta) -  (\wedge^2\!\varphi_N^t) (d\beta)
+ i_{\varphi_N}d (\varphi_N^t \beta)).
\end{aligned}
\end{equation}
Suppose $\beta \in \psi_\Gamma(Z^1)$. Then by the definition of $\varphi_N^t$ also
$\varphi_N^t\beta$ and $(\varphi_N^t)^2\beta$ are elements of
$\psi_\Gamma(Z^1)$.
As all the elements in $Z^1$ are closed and $\psi_\Gamma$ commutes with
the
differentials, we get that all three forms $\beta$, $\varphi_N^t\beta$, and
$(\varphi_N^t)^2 \beta$ are closed. Hence by~\eqref{betaphinphinfn}, $\beta
\circ [\varphi_N,\varphi_N]_{FN} =0$. From~\Cref{projectable}\eqref{psigammaz1}
and the definition of $\xi_N$, we get that
$\beta(\xi_N) =0$. Thence $\beta \circ ( [\varphi_N,\varphi_N]_{FN} + 2 d\eta_N \otimes
\xi_N ) =0$ for all $\beta \in \psi_\Gamma(Z^1)$.

Now take $\beta = \eta_N$. Using $\varphi_N^t \eta_N=0$ and \eqref{betaphinphinfn},
we get
\begin{equation*}
\begin{aligned}
\eta_N \circ [\varphi_N,\varphi_N]_{FN} = - 2(\wedge^2 \varphi_N^t) d\eta_N.
\end{aligned}
\end{equation*}
Hence, we have to check that
\begin{equation}
\label{wedgephidetaN}
\begin{aligned}
 (\wedge^2 \varphi_N^t)d\eta_N =   d\eta_N.
\end{aligned}
\end{equation}
Write $\psi$ for $\psi_\Gamma|_{{\liealgdual_\Gamma}}$. Then
$\psi_\Gamma|_{\extalg^2{\liealgdual_\Gamma} } = \wedge^2 \psi$. We have $d\eta_N = d\psi_\Gamma (\tileta) = \psi_\Gamma (d\tileta) =
\wedge^2\psi (d\tileta)$. As $\liealg_\Gamma$ is isomorphic to the Heisenberg
Lie algebra,  one gets that
$d\tileta \in \extalg^2 Z^1$.  Hence
\begin{equation*}
\begin{aligned}
(\wedge^2 (\varphi_N^t \circ \psi)) d\tileta = (\wedge^2 (\psi \circ \tau^{-1}
\circ \varphi^t \circ \tau) ) d\tileta.
\end{aligned}
\end{equation*}
We get that~\eqref{wedgephidetaN} is equivalent to
\begin{equation}
\label{psigammawedgetauphitaudetaheis}
\begin{aligned}
\psi_\Gamma \left( (\wedge^2 \tau^{-1} \circ \varphi^t \circ \tau) (d\tileta)
\right) = \psi_\Gamma(d\tileta).
\end{aligned}
\end{equation}
As $\psi_\Gamma$ is injective, as well as $\rho$ by \Cref{rhoinjective},
\eqref{psigammawedgetauphitaudetaheis} is equivalent to
\begin{equation*}
\begin{aligned}
\rho \left( (\wedge^2 \tau^{-1} \circ \varphi^t \circ \tau) (d\tileta)
\right) = \rho(d\tileta).
\end{aligned}
\end{equation*}
As
$\rho|_{Z^1 } = \tau$, we have $\rho|_{\extalg^2 Z^1} = \wedge^2 \tau$.
Thus the above equation becomes
\begin{equation}
\label{wedgephirhodetaheis}
\begin{aligned}
  (\wedge^2  \varphi^t) \rho (d\tileta) = \rho(d\tileta).
\end{aligned}
\end{equation}
From the characteristic property of $\tileta$, we get
\begin{equation*}
\begin{aligned}
\rho(d\tileta) & = d \rho(\tileta) =  d ( \eta + i_\varphi df ) = d\eta +
di_\varphi df.
\end{aligned}
\end{equation*}
Notice that $i_\varphi d\eta = 0$ by $\eqref{iphideta}$.
As $f$ is basic, we get
\begin{equation*}
\begin{aligned}
i_\varphi di_\varphi df & = \lie_\varphi^2 f + d i_{\varphi}^2 df = (d_B^c)^2
f + d ( df \circ \varphi^2) =  d (-df
+ df(\xi) \eta)   =  0.
\end{aligned}
\end{equation*}
Thus $i_\varphi \rho(d\tileta) =0$.
Since $d\tileta\in \extalg^2 Z^1$ and $\rho(Z^1)$ is a subset of basic
forms on $M$, we get that $\rho(d\tileta)$ is a basic form.

Now, if $\omega$ is a basic $2$-form on $M$ such that $i_\varphi \omega=0$, then
\mbox{$(\wedge^2 \varphi^t) \omega = \omega$}. Indeed, for any $X$, $Y\in TM$, we get
\begin{equation*}
\begin{aligned}
0 = (i_\varphi \omega) (\varphi X, Y) & = \omega (\varphi^2 X, Y) +
\omega( \varphi X, \varphi Y) \\ & = - \omega (X,Y) + \eta(X) i_\xi \omega(Y) + (\wedge^2
\varphi^t)(\omega) (X,Y)
\\& = -\omega(X,Y) + (\wedge^2 \varphi^t) (\omega) (X,Y).
\end{aligned}
\end{equation*}
This proves \eqref{wedgephirhodetaheis}.
\end{proof}
\begin{claim}
\label{hdftwisted}
The map $h$ is $df$-twisted.
\end{claim}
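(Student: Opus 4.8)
The plan is to verify directly the three defining conditions of a $df$-twisted map for $h\from M \to N_\Gamma$, namely
\begin{equation*}
\begin{aligned}
\eta = h^*\eta_N - i_\varphi df,\qquad Th\circ \xi = \xi_N\circ h,\qquad Th\circ(\varphi + \xi\otimes df) = \varphi_N\circ Th.
\end{aligned}
\end{equation*}
The first two are already in hand. Indeed, $h^*\eta_N = h^*\psi_\Gamma(\tileta) = \rho(\tileta) = \eta + \lie_\varphi f = \eta + i_\varphi df$ by \Cref{etaheis}, and $\lie_\xi f = 0$ forces $df$ to be basic, so the first condition is a mere rearrangement. The second is nothing but the definition $\xi_{N,h(x)} = Th(\xi_x)$ from \Cref{projectable}. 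Thus the content of the claim is the third ($\varphi$) condition.

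To handle it I would dualize. Taking transposes of the endomorphisms involved, the third condition is equivalent to the equality of cotangent maps $(\varphi^t + df\otimes\xi)\circ h^* = h^*\circ\varphi_N^t$ along $h$. Writing it out, using $(h^*\zeta)(\xi) = \zeta(Th\,\xi) = \zeta(\xi_N)$ and the fact that $i_\varphi$ acts as $\varphi^t$ on $1$-forms, it suffices to prove, for each $x\in M$ and each $\zeta \in T^*_{h(x)}N_\Gamma$,
\begin{equation*}
\begin{aligned}
i_\varphi(h^*\zeta) + \zeta(\xi_N)\, df = h^*(\varphi_N^t\zeta).
\end{aligned}
\end{equation*}
Since $h$ is surjective and, at every point $y$, the covectors $\psi_{\Gamma,y}(Z^1)$ together with $\eta_{N,y}$ span $T^*_y N_\Gamma$, I would check this identity only for the global $1$-forms $\zeta = \psi_\Gamma(\alpha)$ with $\alpha\in Z^1$ and for $\zeta = \eta_N$.

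For $\zeta = \psi_\Gamma(\alpha)$ the term $\zeta(\xi_N)$ vanishes by \Cref{projectable}\eqref{psigammaz1}, while $h^*\zeta = \rho(\alpha) = \tau(\alpha)$ is harmonic; so the left side equals $i_\varphi\rho(\alpha)$. On the right, the definition of $\varphi_N^t$ on $\ker\xi_N$ gives $h^*(\varphi_N^t\psi_\Gamma(\alpha)) = \rho\bigl(\tau^{-1} i_\varphi \rho(\alpha)\bigr) = i_\varphi\rho(\alpha)$, using that $i_\varphi$ preserves $\Omega^1_\lap(M)$ (by \cite[Thm.~4.1]{fujitani}, exactly as in the construction of $\varphi_N$) and that $\rho|_{Z^1}=\tau$; so both sides agree. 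The delicate case, which I expect to be the main obstacle, is $\zeta = \eta_N$: here $\varphi_N^t\eta_N = 0$, so the right side vanishes, and the point is to see the left side cancel to zero. Using $h^*\eta_N = \eta + i_\varphi df$, $\eta_N(\xi_N)=1$, $i_\varphi\eta = 0$ from \eqref{etaphixi}, and $\varphi^2 = -\id + \xi\otimes\eta$ together with $i_\xi df = 0$, one computes $i_\varphi i_\varphi df = df\circ\varphi^2 = -df$, whence the left side equals $0 - df + df = 0$. This establishes the $\varphi$ condition and completes the verification that $h$ is $df$-twisted.
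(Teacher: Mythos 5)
Your proposal is correct and follows essentially the same route as the paper: both verify the first two conditions directly from \Cref{etaheis} and the definition of $\xi_N$, then dualize the $\varphi$-condition and check it separately on $\psi_\Gamma(Z^1)$ (using that $\rho(\alpha)=\tau(\alpha)$ is harmonic, that $i_\varphi$ preserves $\Omega^1_\lap(M)$, and that $\rho\circ\tau^{-1}$ is the identity there) and on $\eta_N$ (where the cancellation $i_\varphi i_\varphi df = -df$ for basic $f$ does the work). The only difference is cosmetic: you rearrange the dual identity as $i_\varphi(h^*\zeta)+\zeta(\xi_N)\,df = h^*(\varphi_N^t\zeta)$, while the paper writes the equivalent $(h^*\zeta)\circ(\varphi+\xi\otimes df)=h^*(\zeta\circ\varphi_N)$.
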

\begin{proof}
By the definition of $\xi_N$, we have $Th \circ \xi = \xi_N \circ h$. Next,
\begin{equation}
\label{etaNdftwisted}
\begin{aligned}
h^* \eta_N = h^* \psi_\Gamma (\tileta) =\rho(\tileta) = \eta + i_\varphi
(df).
\end{aligned}
\end{equation}

We have to check for every $x \in M$
the equality of maps from $T_x M$ to $T_{h(x)} N_\Gamma$
\begin{equation*}
\begin{aligned}
T_{x}h \circ ( \varphi_{x} + \xi_{M,x} \otimes
(df)_x) = \varphi_{N,x} \circ T_xh.
\end{aligned}
\end{equation*}
As $\psi_\Gamma(Z^1)_{h(x)} \oplus \left\langle \eta_{N,h(x)} \right\rangle =
T^*_{h(x)}(N_\Gamma)$ and $\psi_\Gamma(Z^1)_{h(x)} = \Ann (\xi_{N,h(x)})$, it is enough to show that for every $\alpha \in Z^1$
\begin{equation}
\label{hpsigammaalphaphi}
\begin{aligned}
(h^* \psi_\Gamma(\alpha)) \circ (\varphi + \xi\otimes df) = h^* ( \psi_\Gamma(\alpha) \circ \varphi_N)
\end{aligned}
\end{equation}
and that
\begin{equation}
\label{hetaNphi}
\begin{aligned}
(h^* \eta_N) \circ (\varphi + \xi \otimes df) = 0.
\end{aligned}
\end{equation}
For $\alpha \in Z^1$ and $x \in M$, we have $h^*\psi_\Gamma(\alpha) (\xi_x)=
\psi_\Gamma(\alpha) (\xi_{N,h(x)}) =0$.
Thus \eqref{hpsigammaalphaphi} is equivalent to
\begin{equation*}
\begin{aligned}
(h^* \psi_\Gamma(\alpha)) \circ \varphi = h^* ( \psi_\Gamma(\alpha) \circ
\varphi_N).
\end{aligned}
\end{equation*}
From the definition of $\varphi_N$, it follows that $\psi_\Gamma(\alpha) \circ
\varphi_N = \psi_\Gamma ( \tau^{-1} ( \tau(\alpha) \circ \varphi))$.
As $h^*\circ \psi_\Gamma = \rho$, we get
\begin{equation*}
\begin{aligned}
h^* (\psi_\Gamma (\alpha) \circ \varphi_N)  & = h^* (\psi_\Gamma ( \tau^{-1}
( \tau(\alpha) \circ \varphi))) = \rho \circ \tau^{-1} ( \rho(\alpha) \circ
\varphi) \\& = \rho(\alpha) \circ \varphi  = (h^* \psi_\Gamma(\alpha)) \circ
\varphi.
\end{aligned}
\end{equation*}
It is left to verify~\eqref{hetaNphi}. In the following computation we use that
$f$ is basic at the last step
\begin{equation*}
\begin{aligned}
(h^* \eta_N) \circ (\varphi &+ \xi \otimes df)  = (h^* \psi_\Gamma (\tileta))
\circ \varphi  + (h^*\eta_N)(\xi) df \\&  =
\rho(\tileta) \circ \varphi  + df = (\eta + i_\varphi df) \circ \varphi + df \\&  =
df \circ (-\id + \xi \otimes \eta) + df = 0.
\end{aligned}
\end{equation*}
\end{proof}
To show that $h$ is a diffeomorphism we
will use the map $h_f \colon M \times \R\to N_\Gamma\times \R$
defined by \eqref{hf}.  The map $h_f$ can be written
as the composition $\tilde{h}\circ \tilde{f}$, where the maps
$\tilde{f}$ and $\tilde{h}$ are given by
\begin{equation*}
\begin{aligned}
\tilde{f} \colon  M \times \R& \to M \times \R &\phantom{==}&& \tilde{h}\colon M\times \R &\to
N_\Gamma \times \R
\\  (x, t) & \mapsto (x , t+ f(x)) &\phantom{==}&& (x,t) & \mapsto ( h(x) , t).
\end{aligned}
\end{equation*}
The map $\tilde{f}$ is a diffeomorphism. The map $\tilde{h}$ is a
diffeomorphism if and only if $h$ is a diffeomorphism. Hence also $h_f$ is a
diffeomorphism if and only if $h$ is a diffeomorphism.

\begin{claim}
\label{hfsurjectiveproper}
The map $h_f$ is surjective, universally closed and proper.
\end{claim}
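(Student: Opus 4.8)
The plan is to exploit the factorization $h_f = \tilde h \circ \tilde f$ recorded just above the statement, in which $\tilde f(x,t) = (x, t + f(x))$ is a diffeomorphism and $\tilde h(x,t) = (h(x), t)$, i.e.\ $\tilde h = h \times \id_\R$. A diffeomorphism is surjective, proper, and universally closed, and each of these three classes of maps is stable under composition; for universal closedness one uses $(g\circ f)\times \id_Z = (g\times\id_Z)\circ(f\times\id_Z)$ together with the fact that a composite of closed maps is closed. Hence it suffices to prove that $\tilde h = h\times \id_\R$ is surjective, proper, and universally closed. All three properties will come from two inputs already available: the surjectivity of $h$ (\Cref{hsurjective}) and the compactness of $M$.

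Surjectivity is immediate: if $h$ is onto $N_\Gamma$, then $h\times\id_\R$ is onto $N_\Gamma\times\R$. For properness, let $K\subseteq N_\Gamma\times\R$ be compact. Then $\tilde h^{-1}(K)$ is closed, being the preimage of a closed set under a continuous map, and it is contained in $M\times \pr_\R(K)$, where $\pr_\R(K)\subseteq\R$ is compact as a continuous image of a compact set. Since $M$ is compact, $M\times \pr_\R(K)$ is compact, and a closed subset of a compact space is compact; thus $\tilde h^{-1}(K)$ is compact.

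The one point that needs genuine care is universal closedness, and it is here that the compactness of $M$ is used in its strongest form. For any topological space $Z$ we have $\tilde h\times\id_Z = h\times \id_{\R\times Z}$, so it is enough to show that $h\times\id_W$ is a closed map for every space $W$. I would prove this by the standard tube-lemma argument: given a closed set $C\subseteq M\times W$ and a point $(y_0,w_0)\notin (h\times\id_W)(C)$, the fibre $h^{-1}(y_0)$ is compact (closed in the compact $M$), so $h^{-1}(y_0)\times\{w_0\}$ is a compact set disjoint from $C$; the tube lemma then yields an open $U\supseteq h^{-1}(y_0)$ in $M$ and an open $V\ni w_0$ in $W$ with $(U\times V)\cap C=\emptyset$. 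Because $h$ is a closed map (a continuous map from a compact space to a Hausdorff manifold), $h(M\setminus U)$ is closed and misses $y_0$, so $W':=N_\Gamma\setminus h(M\setminus U)$ is an open neighbourhood of $y_0$, and one checks directly that $(W'\times V)\cap (h\times\id_W)(C)=\emptyset$. This shows $(h\times\id_W)(C)$ is closed.

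Alternatively, once properness is established one may invoke the theorem of Bourbaki that a continuous map into a locally compact Hausdorff space whose preimages of compact sets are compact is automatically universally closed; since $N_\Gamma\times\R$ is a manifold this applies and gives universal closedness for free. Either way, the main obstacle is the universal closedness, whose entire content is the observation that maps out of the compact manifold $M$ remain closed after taking products, the surjectivity and properness being formal consequences of $h$ being onto and $M$ being compact.
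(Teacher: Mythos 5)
Your proposal is correct and follows essentially the same route as the paper: both factor $h_f=\tilde h\circ\tilde f$, reduce everything to the compactness of $M$ and the surjectivity of $h$, and use the equivalence of properness and universal closedness for continuous maps into a locally compact Hausdorff target (your ``alternative'' via Bourbaki is exactly the paper's argument, and your tube-lemma paragraph merely unpacks that standard fact by hand). No gaps.
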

\begin{proof}
The map $h_f$ is surjective being a composition of two surjective maps.
Next, the map $h$ is proper since it is  a continuous map between compact topological
spaces. Given a Hausdorff topological space $X$ and a locally compact Hausdorff
space $Y$, a continuous map $X\to Y$ is proper if and only if it is universally closed.
Thus $h$ is universally closed, and, hence, also $\tilde{h}$ is universally
closed and, thus, proper. The same properties hold for $h_f$, as $\tilde{f}$ is a homeomorphism.
\end{proof}

By \Cref{hfholomorphicnew} and \Cref{hdftwisted}, the map $h_f$ is
holomorphic.

Recall that a continuous map $\psi\colon X \to Y$ between two topological spaces is called
\emph{finite} if it is closed and has finite fibers.
\begin{claim}
\label{hffinite}
The map $h_f\colon M \times \R\to N_\Gamma \times \R$ is finite.
\end{claim}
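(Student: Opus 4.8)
The plan is to establish separately the two defining properties of a finite map: closedness and finiteness of the fibers. Closedness is immediate, since $h_f$ is universally closed by \Cref{hfsurjectiveproper}. It therefore remains to prove that every fiber of $h_f$ is finite. As $h_f$ is proper (again by \Cref{hfsurjectiveproper}), each fiber $h_f^{-1}(y)$ is compact, and as $h_f$ is holomorphic, each fiber is an analytic subset of the complex manifold $M\times\R$. Thus every fiber is a compact analytic subvariety, and it suffices to show it is zero-dimensional: a zero-dimensional analytic set is discrete, and a discrete compact set is finite.

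Suppose, for contradiction, that some fiber contains an irreducible component $V$ of complex dimension $k\ge 1$. The key geometric input is that $M\times\R$ carries the Kähler form $\omega_K=\tfrac12\,d(e^{2t}\eta)$, which is \emph{exact} by \Cref{exact}. I would then invoke Hironaka's (embedded) resolution of singularities to produce a smooth compact complex manifold $\tilde V$ together with a proper holomorphic map $\pi\from\tilde V\to V$ that restricts to a biholomorphism over a dense open subset of $V$. Composing with the inclusion $V\hookrightarrow M\times\R$, we pull back $\omega_K$ to a closed $(1,1)$-form $\pi^*\omega_K$ on $\tilde V$; since $\omega_K$ is exact, so is $\pi^*\omega_K$.

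The contradiction then arises from evaluating $\int_{\tilde V}(\pi^*\omega_K)^k$ in two ways. On one hand, $(\pi^*\omega_K)^k$ is an exact $2k$-form on the compact boundaryless manifold $\tilde V$, so Stokes' theorem gives $\int_{\tilde V}(\pi^*\omega_K)^k=0$. On the other hand, since $\pi$ is generically a biholomorphism onto $V$ and $\omega_K$ is a Kähler form, Wirtinger's theorem yields $\int_{\tilde V}(\pi^*\omega_K)^k=\int_V\omega_K^k=k!\,\mathrm{vol}(V)>0$, the volume being strictly positive because $\dim_\C V=k\ge 1$. This is the desired contradiction, so every fiber is zero-dimensional, and hence finite; together with closedness this shows $h_f$ is finite.

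I expect the main obstacle to be the careful handling of integration over the possibly singular variety $V$ and the legitimate application of Stokes' theorem, which is precisely the role of the resolution $\pi\from\tilde V\to V$: it transfers the computation to the smooth compact manifold $\tilde V$ while preserving both the exactness of the pulled-back form and, via Wirtinger, the positivity of the volume. One should still check that $\pi^*\omega_K$ remains exact and that the identity $\int_{\tilde V}(\pi^*\omega_K)^k=\int_V\omega_K^k$ holds in spite of the singularities of $V$, but both follow from $\pi$ being a proper modification that is an isomorphism over a dense open set, so that the singular locus of $V$ is negligible for integration.
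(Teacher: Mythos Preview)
Your proposal is correct and follows essentially the same approach as the paper: reduce to showing fibers are finite, use properness and holomorphicity to see each fiber is a compact analytic subvariety, and then rule out positive-dimensional irreducible components via Hironaka resolution together with the exactness of the K\"ahler form on $M\times\R$ (\Cref{exact}). The paper packages the last step as a separate lemma (\Cref{exactkaehler}) and uses the \emph{embedded} resolution of the pair $Z\subset X$ so that the strict transform sits inside a smooth ambient space and the pullback of $\omega$ is argued to be an honest K\"ahler form; you instead resolve $V$ abstractly and obtain the contradiction through Wirtinger and Stokes on $\tilde V$, noting that $\pi^*\omega_K$ is exact and that the exceptional set is negligible for the integral---a minor variation in execution rather than a different idea.
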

\begin{proof}
We already saw in \Cref{hfsurjectiveproper} that $h_f$ is  closed.  Thus it
is left to show that $h_f$ has finite fibers.
Fix $y\in N_\Gamma \times \R$.
 Since
$h_f$ is holomorphic, $h_f^{-1}(y)$ is a complex analytic subvariety
of $M\times \R$. By \Cref{hfsurjectiveproper}, $h_f$ is proper. Thus
$h_f^{-1}(y)$ is compact. Hence $h_f^{-1}(y)$ is a union of finitely many irreducible
complex subvarieties (cf. \cite[Sec. 9.2.2]{grauert2}).
By \Cref{exact}, the Kähler form on
$M \times \R$ is exact.
We will show in \Cref{exactkaehler}, that
every compact irreducible subvariety
  of a Kähler manifold with an
exact Kähler form is a point. Hence $h_f^{-1}(y)$ is a union of finitely many
points.
\end{proof}
\begin{lemma}
\label{exactkaehler}
Let $X$ be a Kähler manifold and $Z \subset X$ an irreducible compact complex analytic
subvariety in $X$. If the Kähler form of $X$ is exact, then $Z$ is a point.
\end{lemma}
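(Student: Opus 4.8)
The plan is to argue by contradiction on the complex dimension of $Z$. Set $k := \dim_\C Z$ and suppose $k \geq 1$. The strategy is the classical positivity-versus-exactness dichotomy: the top power $\int_Z \omega^k$ of the K\"ahler form is strictly positive because $\omega$ is positive, whereas exactness of $\omega$ together with Stokes' theorem forces it to vanish, giving a contradiction.

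First I would establish the positivity. Let $Z_{\mathrm{reg}}$ be the regular locus of $Z$, which is a connected complex $k$-dimensional submanifold of $X$ (connected since $Z$ is irreducible), and let $Z_{\mathrm{sing}}$ be its singular locus, a closed analytic subset of complex dimension at most $k-1$, hence of measure zero. The restriction $\omega|_{Z_{\mathrm{reg}}}$ is again a K\"ahler form, so $(\omega|_{Z_{\mathrm{reg}}})^k$ is a positive volume form for the canonical orientation, and therefore $\int_{Z_{\mathrm{reg}}} \omega^k > 0$.

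Next the vanishing, which is where the real obstacle lies. Writing $\omega = d\alpha$ and using $d\omega = 0$, we have $\omega^k = d\bigl(\alpha \wedge \omega^{k-1}\bigr)$, so $\omega^k$ is exact. If $Z$ were smooth, Stokes' theorem on the compact boundaryless manifold $Z$ would immediately give $\int_Z \omega^k = 0$, contradicting the previous paragraph. \textbf{The main obstacle is thus the presence of singularities:} one must both make sense of the integral and justify Stokes' theorem across $Z_{\mathrm{sing}}$. I would resolve this by invoking Hironaka's resolution of singularities (matching the resolution techniques already used in the proof of \Cref{main2}) to obtain a compact complex manifold $\widetilde{Z}$ together with a proper surjective holomorphic map $\pi \from \widetilde{Z}\to Z$ that restricts to a biholomorphism over $Z_{\mathrm{reg}}$, the exceptional set $E\subset \widetilde{Z}$ being a proper analytic subset and hence of measure zero. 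Then $\pi^*\omega$ is a smooth closed form on $\widetilde{Z}$; since $\pi$ is a diffeomorphism off the measure-zero set $E$, the change-of-variables formula gives $\int_{\widetilde{Z}} (\pi^*\omega)^k = \int_{\widetilde{Z}\setminus E} \pi^*(\omega^k) = \int_{Z_{\mathrm{reg}}} \omega^k > 0$. On the other hand $\pi^*\omega = d(\pi^*\alpha)$ is exact, so $(\pi^*\omega)^k = d\bigl(\pi^*\alpha \wedge (\pi^*\omega)^{k-1}\bigr)$ is exact on the compact boundaryless manifold $\widetilde{Z}$, whence $\int_{\widetilde{Z}} (\pi^*\omega)^k = 0$ by Stokes. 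This is the desired contradiction, so $k=0$.

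Finally, an irreducible analytic subvariety of complex dimension $0$ is a single reduced point, so $Z$ is a point, as claimed. I should note that the appeal to resolution of singularities can alternatively be replaced by Lelong's theorem, which gives that the current of integration over $Z$ is a well-defined closed positive current; this yields both the finiteness and positivity of $\int_{Z_{\mathrm{reg}}}\omega^k$ and the vanishing $\int_{Z_{\mathrm{reg}}} d\beta = 0$ directly, bypassing the resolution. I nonetheless prefer the resolution argument here, as it keeps the proof within the complex-analytic toolkit already in play and avoids developing the theory of currents.
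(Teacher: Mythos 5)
Your proof is correct and takes essentially the same route as the paper: both pass to a Hironaka resolution of the singularities of $Z$ and then derive a contradiction between the strict positivity of $\int \omega^k$ and its vanishing, which comes from Stokes' theorem applied to the exact form $\omega^k = d(\alpha\wedge\omega^{k-1})$. The only cosmetic difference is that the paper uses the \emph{embedded} resolution of the pair $Z\subset X$ and pulls $\omega$ back along the resulting immersion of the strict transform, so as to land on a compact Kähler manifold with exact Kähler form, whereas you resolve $Z$ directly and compare integrals off a measure-zero exceptional set; you also spell out the Stokes step that the paper leaves implicit.
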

\begin{proof}
By embedded Hironaka resolution of singularities for the pair $Z \subset X$,
there is a proper birational  holomorphic map $\pi \colon \widetilde{X}\to X$ of
complex manifolds with exceptional
locus $\Sigma$, such that the strict transform
\begin{equation*}
\begin{aligned}
\widetilde{ Z } = \overline{\pi^{-1}(Z \setminus \Sigma ) }
\end{aligned}
\end{equation*}
of $Z$ is a smooth complex subvariety of $\widetilde{X}$ and the restriction of $\pi$ to
$\widetilde{Z}$ is an immersion.
Denote this restriction by $\sigma$.
In our case, we have that $\widetilde{Z}$ is a compact submanifold of
$\widetilde{X}$. Indeed, $\widetilde{Z}$ is a closed subset of $\pi^{-1}(Z)$,
which  is compact since
$\pi$ is proper and $Z$ is compact.

Write $\omega $ for the Kähler form on
$X$. Since $\omega$ is exact, we have $\omega = d\alpha$ for some $\alpha \in \Omega^1(X)$.
The form $\sigma^*\omega$ is a Kähler form on $\widetilde{Z}$. Indeed, it is
obviously closed and it is positive, since for every nonzero $X \in T\widetilde{Z}$
\begin{equation*}
\begin{aligned}
\sigma^*\omega (JX, X) = \omega ( \sigma_*(JX), \sigma_* X ) = \omega (
J\sigma_* X , \sigma_* X)  = g( \sigma_* X , \sigma_* X)>0.
\end{aligned}
\end{equation*}
We used that $\sigma$ is an immersion in the last step.
As $\sigma^* \omega = d (\sigma^*\alpha)$, we get that $\widetilde{Z}$ is a
compact Kähler manifold with an exact Kähler form, which is possible only if
$\widetilde{Z}$ is a finite union of  points.
Hence $Z = \sigma(\widetilde{Z})$ is a finite union of points. Since $Z$ is irreducible, we
conclude that $Z$ is a point.
\end{proof}
Now we are ready to finish the proof of \Cref{main2}.
\begin{claim}
The map $h_f$ is a  biholomorphism. In particular, $h_f$ and $h$ are
diffeomorphisms.
\end{claim}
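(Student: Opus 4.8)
The plan is to prove that $h_f$ is bijective and then invoke the classical fact that a bijective holomorphic map between complex manifolds is automatically a biholomorphism. By \Cref{hfsurjectiveproper} and \Cref{hffinite} the map $h_f$ is a surjective finite map between the connected complex manifolds $M\times\R$ and $N_\Gamma\times\R$, both of complex dimension $n+1$, and by \Cref{hfholomorphicnew} together with \Cref{hdftwisted} it is holomorphic. Being finite and holomorphic, $h_f$ is open (Remmert) and is a branched covering of a well-defined degree $d$, namely the cardinality of a generic fiber, with the property that every fiber, counted with multiplicity, consists of exactly $d$ points. Hence it suffices to show $d=1$.

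First I would identify the fibers. For $(y,t)\in N_\Gamma\times\R$ one has
\[
h_f^{-1}(y,t)=\left\{ (x,t-f(x)): h(x)=y \right\},
\]
which is in bijection with $h^{-1}(y)$; in particular $d$ equals the number of preimages of a regular value of $h$. Next, recall the block form of the differential computed in \Cref{hfholomorphicnew},
\[
T_{(x,s)}h_f=\sm{T_x h & 0 \\ T_x f & 1},
\]
so that $\det_{\R}\!\left(T_{(x,s)}h_f\right)=\det_{\R}\!\left(T_x h\right)$. Since $h_f$ is holomorphic, its real Jacobian is positive wherever it is invertible, whence $\det_{\R}(T_x h)>0$ at every regular point of $h$, for the orientations of $M$ and $N_\Gamma$ induced from the complex orientations of $M\times\R$ and $N_\Gamma\times\R$ via the splittings $T(M\times\R)=TM\oplus\R\ddt$ and $T(N_\Gamma\times\R)=TN_\Gamma\oplus\R\ddt$. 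Thus $h$ is orientation preserving at each regular preimage, so the local degree there equals $+1$ and the number of preimages of a regular value equals the topological degree of $h$. By \Cref{hsurjective} this degree is $\pm1$, and since a number of preimages is nonnegative it must equal $1$. Hence $d=1$.

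Finally, with $d=1$ every fiber of the finite holomorphic map $h_f$ consists, with multiplicity, of a single point, so $h_f$ is bijective. As a bijective holomorphic map between complex manifolds is a biholomorphism, $h_f$ is a biholomorphism, and in particular a diffeomorphism. Since $h_f$ is a diffeomorphism if and only if $h$ is, the map $h$ is a diffeomorphism, which completes the proof of \Cref{main2}.

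I expect the main obstacle to be the orientation bookkeeping in the degree computation: relating $\det_{\R}T h_f$ to $\det_{\R}Th$ and arguing that holomorphicity of $h_f$ forces $h$ to be orientation preserving, so that the geometric and signed counts of preimages coincide. The remaining work is to cite the complex-analytic inputs correctly, namely Remmert's open mapping theorem and the degree theory of finite holomorphic maps (every fiber has $d$ points with multiplicity), together with the theorem that a bijective holomorphic map of complex manifolds is a biholomorphism.
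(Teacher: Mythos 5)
Your proposal is correct and follows essentially the same route as the paper: both reduce to showing that a generic fiber of the finite holomorphic surjection $h_f$ is a single point, identify $h_f^{-1}(y,t)$ with $h^{-1}(y)$, use the block form $\det T_{(x,s)}h_f=\det T_xh$ together with the positivity of the real Jacobian of a holomorphic map at its regular points to see that every preimage contributes local degree $+1$, and conclude from $\deg(h)=\pm1$ that the fiber is a singleton, whence $h_f$ is a holomorphic bijection and therefore a biholomorphism. The only cosmetic difference is that the paper phrases the generic-fiber step via the Grauert--Remmert notion of a one-sheeted analytic covering rather than the degree of a branched covering.
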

\begin{proof}
By~\cite[Sec. 9.3.3]{grauert2} a finite holomorphic surjection between irreducible
complex spaces is an analytic covering.
The map $h_f$ is surjective
by \Cref{hfsurjectiveproper}, finite by \Cref{hffinite}, and
holomorphic by~\Cref{hfholomorphicnew} and \Cref{hdftwisted}.
 Thus  $h_f$ is an analytic covering.
Hence there is a nowhere dense closed subset $T$ in $N_\Gamma \times \R$ such that the
induced map
\begin{equation*}
\begin{aligned}
h_T \colon  (M\times \R) \setminus h_f^{-1}(T) \to (N_\Gamma
\times \R) \setminus T
\end{aligned}
\end{equation*}
is
locally biholomorphic.
To complete the proof it is enough
to show
 that $h_T$ is a biholomorphism. Indeed, this will imply that $h_f$ is
a one-sheeted analytic covering and then by a result in~\cite[Sec.
8.1.2]{grauert2} the map $h_f$ is a bijection.
By~\cite[Corollary~8.6]{grauert}, every holomorphic bijection is a
biholomorphism. Hence, we will get that $h_f$ is a biholomorphism.

To show that $h_T$ is a biholomorphism it is enough to show that $h_T$ is
bijective.
Let $(y,b) \in R := (N_\Gamma \times \R) \setminus T$.
Then $(y,b)$ is a regular value of $h_T$ and thus a regular value of $h_f$.
Let $(x,a)$ be in the preimage of $(y,b)$.
As $(x,a)$ is a regular point of the holomorphic map $h_f$, we have
$\det T_{(x,a)} h_f >0$. But $\det T_{(x,a)}h_f = \det T_x h$, hence
$\det T_x h > 0$ for all $x \in h^{-1}(y)$.
In particular, $x$ is a regular point of $h$. Since $(x,a)$ was arbitrary, we
conclude that $y$ is a regular value of $h$.
By \Cref{hsurjective}, the map $h$ has degree $\pm 1$. Hence
\begin{equation*}
\begin{aligned}
\pm 1 = \deg(h) = \sum_{x \in h^{-1}(y)} \sign (\det T_x h) = |h^{-1}(y)|.
\end{aligned}
\end{equation*}
Therefore the number of points in $h^{-1}(y)$ is one. Let $x$ be the unique
point in $h^{-1}(y)$. Then $h_T^{-1}(y,b) = h_f^{-1}(y,b) = \{(x,b - f(x) \}$.
Hence $h_T$ is a bijection.
\end{proof}

\section*{Acknowledgment}
We are grateful to Hisashi Kasuya for suggesting that \Cref{solvable} should hold. Moreover, we would like to thank the anonymous referee for providing suggestions that
permitted us to clarify and simplify some arguments of our proof.

\bibliography{sas}

\begin{thebibliography}{10}

\bibitem{lueck}
Arthur Bartels and Wolfgang L\"{u}ck.
\newblock The {B}orel conjecture for hyperbolic and {${\rm CAT}(0)$}-groups.
\newblock {\em Ann. of Math. (2)}, 175(2):631--689, 2012.

\bibitem{cortes}
Oliver Baues and Vicente Cort\'{e}s.
\newblock Aspherical {K}\"{a}hler manifolds with solvable fundamental group.
\newblock {\em Geom. Dedicata}, 122:215--229, 2006.

\bibitem{baues2020}
Oliver Baues and Yoshinobu Kamishima.
\newblock Locally homogeneous aspherical {S}asaki manifolds.
\newblock {\em Differential Geom. Appl.}, 70:101607, 41, 2020.

\bibitem{bazzoni}
Giovanni Bazzoni.
\newblock Vaisman nilmanifolds.
\newblock {\em Bull. Lond. Math. Soc.}, 49(5):824--830, 2017.

\bibitem{bensongordon}
Chal Benson and Carolyn~S. Gordon.
\newblock K\"{a}hler and symplectic structures on nilmanifolds.
\newblock {\em Topology}, 27(4):513--518, 1988.

\bibitem{bieri}
Robert Bieri.
\newblock Gruppen mit {P}oincar\'{e}-{D}ualit\"{a}t.
\newblock {\em Comment. Math. Helv.}, 47:373--396, 1972.

\bibitem{bieri_book}
Robert Bieri.
\newblock {\em Homological dimension of discrete groups}.
\newblock Queen Mary College Mathematics Notes. Queen Mary College, Department
  of Pure Mathematics, London, second edition, 1981.

\bibitem{blair}
David~E. Blair.
\newblock {\em Riemannian geometry of contact and symplectic manifolds}, volume
  203 of {\em Progress in Mathematics}.
\newblock Birkh\"{a}user Boston, Ltd., Boston, MA, second edition, 2010.

\bibitem{galicki}
Charles~P. Boyer and Krzysztof Galicki.
\newblock {\em Sasakian geometry}.
\newblock Oxford Mathematical Monographs. Oxford University Press, Oxford,
  2008.

\bibitem{bgk}
Charles~P. Boyer, Krzysztof Galicki, and J\'{a}nos Koll\'{a}r.
\newblock Einstein metrics on spheres.
\newblock {\em Ann. of Math. (2)}, 162(1):557--580, 2005.

\bibitem{imrn}
Beniamino {Cappelletti-Montano}, Antonio {De Nicola}, Juan~Carlos Marrero, and
  Ivan Yudin.
\newblock Sasakian nilmanifolds.
\newblock {\em Int. Math. Res. Not. IMRN}, 2015(15):6648--6660, 2015.

\bibitem{israel}
Beniamino {Cappelletti-Montano}, Antonio {De Nicola}, Juan~Carlos Marrero, and
  Ivan Yudin.
\newblock Almost formality of quasi-{S}asakian and {V}aisman manifolds with
  applications to nilmanifolds.
\newblock {\em Israel J. Math.}, 241(1):37--87, 2021.

\bibitem{chen}
Kuo-Tsai Chen.
\newblock Extension of {$C^{\infty }$} function algebra by integrals and
  {M}alcev completion of {$\pi _{1}$}.
\newblock {\em Advances in Math.}, 23(2):181--210, 1977.

\bibitem{clement}
Anthony~E. Clement, Stephen Majewicz, and Marcos Zyman.
\newblock {\em The theory of nilpotent groups}.
\newblock Birkh\"{a}user/Springer, Cham, 2017.

\bibitem{wolak}
Luis~A. Cordero and Robert~A. Wolak.
\newblock Properties of the basic cohomology of transversely {K}\"{a}hler
  foliations.
\newblock {\em Rend. Circ. Mat. Palermo (2)}, 40(2):177--188, 1991.

\bibitem{exoticnil}
F.~T. Farrell and L.~E. Jones.
\newblock Exotic smoothings of hyperbolic manifolds which do not support
  pinched negative curvature.
\newblock {\em Proc. Amer. Math. Soc.}, 121(2):627--630, 1994.

\bibitem{felixhalperin}
Yves F\'{e}lix, Stephen Halperin, and Jean-Claude Thomas.
\newblock {\em Rational homotopy theory}, volume 205 of {\em Graduate Texts in
  Mathematics}.
\newblock Springer-Verlag, New York, 2001.

\bibitem{grauert}
Klaus Fritzsche and Hans Grauert.
\newblock {\em From holomorphic functions to complex manifolds}, volume 213 of
  {\em Graduate Texts in Mathematics}.
\newblock Springer-Verlag, New York, 2002.

\bibitem{fujitani}
Tamehiro Fujitani.
\newblock Complex-valued differential forms on normal contact {R}iemannian
  manifolds.
\newblock {\em Tohoku Math. J. (2)}, 18:349--361, 1966.

\bibitem{goto}
Morikuni Goto.
\newblock On an arcwise connected subgroup of a {L}ie group.
\newblock {\em Proc. Amer. Math. Soc.}, 20:157--162, 1969.

\bibitem{grauert2}
Hans Grauert and Reinhold Remmert.
\newblock {\em Coherent analytic sheaves}, volume 265 of {\em Grundlehren der
  mathematischen Wissenschaften}.
\newblock Springer-Verlag, Berlin, 1984.

\bibitem{greub1}
Werner Greub, Stephen Halperin, and Ray Vanstone.
\newblock {\em Connections, curvature, and cohomology. {V}ol. {I}: {D}e {R}ham
  cohomology of manifolds and vector bundles}.
\newblock Pure and Applied Mathematics, Vol. 47. Academic Press, New
  York-London, 1972.

\bibitem{tralle}
Bogus{\l}aw Hajduk and Aleksy Tralle.
\newblock Exotic smooth structures and symplectic forms on closed manifolds.
\newblock {\em Geom. Dedicata}, 132:31--42, 2008.

\bibitem{hatcher}
Allen Hatcher.
\newblock {\em Algebraic topology}.
\newblock Cambridge University Press, Cambridge, 2002.

\bibitem{kasuya}
Hisashi Kasuya.
\newblock Cohomologies of {S}asakian groups and {S}asakian solvmanifolds.
\newblock {\em Ann. Mat. Pura Appl. (4)}, 195(5):1713--1719, 2016.

\bibitem{kolarbook}
Ivan Kol\'{a}\v{r}, Peter~W. Michor, and Jan Slov\'{a}k.
\newblock {\em Natural operations in differential geometry}.
\newblock Springer-Verlag, Berlin, 1993.

\bibitem{leebook}
John~M. Lee.
\newblock {\em Introduction to smooth manifolds}, volume 218 of {\em Graduate
  Texts in Mathematics}.
\newblock Springer, New York, second edition, 2013.

\bibitem{malcev}
A.~I. Malcev.
\newblock On a class of homogeneous spaces.
\newblock {\em Izv. Akad. Nauk SSSR Ser. Mat.}, 13:9--32, 1949.

\bibitem{malcevtr}
A.~I. Malcev.
\newblock On a class of homogeneous spaces.
\newblock {\em Amer. Math. Soc. Translation}, 1951(39):33, 1951.

\bibitem{contactsum}
Christiane Meckert.
\newblock Forme de contact sur la somme connexe de deux vari\'{e}t\'{e}s de
  contact de dimension impaire.
\newblock {\em Ann. Inst. Fourier (Grenoble)}, 32(3):251--260, 1982.

\bibitem{milnor}
John Milnor.
\newblock On manifolds homeomorphic to the {$7$}-sphere.
\newblock {\em Ann. of Math. (2)}, 64:399--405, 1956.

\bibitem{nomizu}
Katsumi Nomizu.
\newblock On the cohomology of compact homogeneous spaces of nilpotent {L}ie
  groups.
\newblock {\em Ann. of Math. (2)}, 59:531--538, 1954.

\bibitem{discrete}
M.~S. Raghunathan.
\newblock {\em Discrete subgroups of {L}ie groups}.
\newblock Ergebnisse der Mathematik und ihrer Grenzgebiete, Band 68.
  Springer-Verlag, New York-Heidelberg, 1972.

\bibitem{anosov}
Federico Rodriguez~Hertz and Zhiren Wang.
\newblock Global rigidity of higher rank abelian {A}nosov algebraic actions.
\newblock {\em Invent. Math.}, 198(1):165--209, 2014.

\bibitem{sharpe}
R.~W. Sharpe.
\newblock {\em Differential geometry}, volume 166 of {\em Graduate Texts in
  Mathematics}.
\newblock Springer-Verlag, New York, 1997.

\bibitem{tachibana}
Shun-ichi Tachibana.
\newblock On harmonic tensors in compact {S}asakian spaces.
\newblock {\em Tohoku Math. J. (2)}, 17:271--284, 1965.

\bibitem{tievsky}
Aaron~Michael Tievsky.
\newblock {\em Analogues of {K}ahler geometry on {S}asakian manifolds}.
\newblock ProQuest LLC, Ann Arbor, MI, 2008.
\newblock Thesis (Ph.D.)--Massachusetts Institute of Technology.

\bibitem{warner}
Frank~W. Warner.
\newblock {\em Foundations of differentiable manifolds and {L}ie groups}.
\newblock Scott, Foresman and Co., Glenview, Ill.-London, 1971.

\bibitem{yamabe}
Hidehiko Yamabe.
\newblock On an arcwise connected subgroup of a {L}ie group.
\newblock {\em Osaka Math. J.}, 2:13--14, 1950.

\end{thebibliography}
\bibliographystyle{plain}
\end{document}